\documentclass[reqno,fleqn]{amsart}
\usepackage{amsfonts,amsthm,amssymb}
\usepackage{lipsum}
\usepackage[utf8]{inputenc}
\usepackage[english]{babel}

\usepackage{lmodern,textcomp}
\usepackage{amsmath}
\usepackage{mathtools}
\usepackage{latexsym}
\usepackage{tikz}
\usepackage{fancyvrb}
\usepackage{epsfig}
\usepackage{pstricks,slashbox,multirow}
\usepackage{graphicx}
\usepackage{rotating}
\usetikzlibrary{positioning}
\usepackage{subcaption}
\usepackage{datetime}
\newtheorem{theorem}{Theorem}[section]

\newtheorem{lemma}[theorem]{Lemma}

\newtheorem{cor}[theorem]{Corollary}
\newtheorem{definition}[theorem]{Definition}

\newtheorem{prm}[theorem]{Problem}

\newtheorem{rem}[theorem]{Remark}

\newtheorem{note}[theorem]{Note}

\title[A study on Type-2 isomorphic $C_n(R)$: Part 1: Type-2 isomorphic $C_n(R)$ w.r.t. $m$ = 2]{A study on Type-2 isomorphic circulant graphs. \\ Part 1: Type-2 isomorphic circulant graphs $C_n(R)$ w.r.t. $m$ = 2}

\author{\sc Vilfred Kamalappan} 
\address{Department of Mathematics, Central University of Kerala, Periye, Kasaragod, Kerala, India - 671 316.}
\email{vilfredkamal@gmail.com}  

\subjclass[2010]{05C60, 05C25, 05C75.}

\keywords{Circulant graph, Cayley Isomorphism (CI) property, Type-1 isomorphism, Type-2 isomorphism, Type-1 group of $C_{n}(R)$, Type-2 group of $C_{n}(R)$ w.r.t. $r$, $(V_{n,r}(C_n(R)), ~\circ)$, VB program.}

\date{}

\begin{document} 

\begin{abstract} This study is the first part of a detailed study on Type-2 isomorphic circulant graphs having ten parts \cite{v2-1}-\cite{v2-10}. Circulant graphs $C_n(R)$ and $C_n(S)$ are said to be \emph{Adam's isomorphic} if there exist some $a\in \mathbb{Z}_n^*$ such that $S = a R$ under arithmetic reflexive modulo $n$ \cite{ad67}. In 1970, Elspas and Turner \cite{eltu} raised a question on the isomorphism of $C_{16}(1,3,7)$ and $C_{16}(2,3,5)$. In 1996, Vilfred \cite{v96}  defined Type-2 isomorphism of $C_n(R)$ w.r.t. $m$ $\ni$ $m$ = $\gcd(n, r) > 1$, $r\in R$ and $r,n\in\mathbb{N}$ and proved that $C_{16}(1,3,7)$ and $C_{16}(2,3,5)$ are Type-2 isomorphic w.r.t. $m$ = 2. 
Type-2 isomorphic circulant graphs w.r.t. $m$ was studied by Vilfred for $m$ = 2 in \cite{v13, v20} and Vilfred and Wilson  obtained families of such graphs for $m$ = 3,5,7 in \cite{vw1} - \cite{vw3}. In \cite{v25}, the author modified the definition to $m$ is a divisor of $\gcd(n, r)$ and $r\in R$ and in this paper, we further modify the definition to $m$ and $m^3$ are divisors of $\gcd(n, r)$ and $n$, respectively, and $r\in R$. This modification simplifies our calculations while finding isomorphic circulant graphs of Type-2. Type-2 isomorphism of circulant graphs is an isomorphism different from already known Adam's isomorphism or {\em Type-1 isomorphism}. Using the modified definition \ref{d3.4}, we present our study on Type-2 isomorphism of circulant graphs $C_n(R)$ w.r.t. $m$ = 2. We prove that $(i)$ $C_{16}(1,2,7)$ and $C_{16}(2,3,5)$ are Type-2 isomorphic w.r.t. $m$ = 2; $(ii)$ For $n \geq 2$, $k \geq 3$, $1 \leq 2s-1 \leq 2n-1$, $n \neq 2s-1$, $R$ = $\{2, 2s-1, 4n-(2s-1)\}$ and $S$ = $\{2, 2n-(2s-1), 2n+2s-1\}$, $C_{8n}(R)$ and $C_{8n}(S)$ are Type-2 isomorphic w.r.t. $m$ = 2, $n,s\in\mathbb{N}$; and $(iii)$ For $n \geq 2$, $1 \leq 2s-1 < 2s'-1 \leq [\frac{n}{2}]$, $0 \leq t \leq [\frac{n}{2}]$, $R$ = $\{2,2s-1, 2s'-1\}$ and $n,s,s'\in \mathbb{N}$, if $\theta_{n,2,t}(C_n(R))$ and $C_n(R)$ are  isomorphic circulant graphs of Type-2 w.r.t. $m$ = 2 for some $t$, then $n \equiv 0~(mod ~ 8)$, $2s-1+2s'-1$ = $\frac{n}{2}$, $2s-1 \neq \frac{n}{8}$, $t$ = $\frac{n}{8}$ or $\frac{3n}{8}$, $1 \leq 2s-1 \leq \frac{n}{4}$ and $n \geq 16$ where $\theta_{n,m,t}$ is a transformation used to define Type-2 isomorphism of a circulant graph. At the end, we present a VB program POLY215.EXE which shows how Type-2 isomorphism w.r.t. $m$ = 2 of $C_{8n}(R)$ takes place for $R = \{2, 2s-1, 4n-(2s-1)\}$, $n \geq 2$ and $n,s\in {\mathbb N}$.
\end{abstract}

\maketitle

	
\section{Introduction}

\subsection{Historical Note} 

Investigation of symmetries as well as asymmetries of structures yields powerful results in mathematics. In 1846, Catalan (cf. \cite{da79}) introduced circulant matrices and thereafter properties of circulant graphs have been studied by many authors \cite{ad67}-\cite{frs}, \cite{hz14}-\cite{vw3}. Circulant graphs form a class of highly symmetric mathematical (graphical) structures and is one of the most studied graph classes having wide range of applications. Advances in science, engineering, technology and education largely dependent on computers \cite{hv}. Pattern recognition is the area where, based on some features, two different objects are matched. Graph isomorphism is a method to check whether two different graphs are similar or not and subgraph isomorphism is nothing but to identify whether an input graph is a part of full graph or not. Thus graph isomorphism is the area of pattern matching and widely used in various applications such as computer and information systems, image processing, protein structures, logic, cryptography, biometrics, compound identification in chemistry,  quantum computing, neuromorphic computing, social networks \cite{cae}, Ramsey theory, VLSI design, interconnection networks in parallel and distributed computing \cite{hz14}, the modeling of data connection networks and the theory of designs and error-correcting codes \cite{h03,st02}. 

Many authors investigated different properties of circulant graphs. Adam \cite{ad67} considered conditions for isomorphism of circulant graphs;  Boesch and Tindell \cite{bt} studied connectivity of circulant graphs; Sachs \cite{sa62} studied self-complementary circulant graphs and a conjecture stated by him on the existence of self-complementary circulant graphs was settled in \cite{amv, frs} (Also, see Theorem 3.7.5, pages 55, 56 in \cite{v96} containing solution to the conjecture.); Vilfred \cite{v13} developed a theory of Cartesian product and factorization of circulant graphs similar to that of natural numbers; a question on the isomorphism of $C_{16}(1,3,7)$ and $C_{16}(2,3,5)$ was raised by Elspas and Turner in \cite{eltu} and Vilfred \cite{v96,v20} gave its answer by defining Type-2 isomorphism; Li, Morris, Muzychuk, Palfy and Toida studied automorphism of circulant graphs \cite{li02}-\cite{mu97}, \cite{to77}. For further studies on circulant graphs, one can refer the book on circulant matrices by Davis \cite{da79} and the survey article by Kra and Simanca \cite{krsi}. 

Circulant graphs $C_n(R)$ and $C_n(S)$ are said to be \emph{Adam's isomorphic} if there exist some $a\in \mathbb{Z}_n^*$ such that $S = a R$ under arithmetic reflexive modulo $n$ \cite{ad67}. Hereafter, we call \emph{Adam's isomorphism} as {\em Type-1 isomorphism}. In 1996, Vilfred  \cite{v96} defined and studied Type-2 isomorphism, different from Type-1 isomorphism, of circulant graphs $C_n(R)$ w.r.t. $m$ $\ni$ $m$ = $\gcd(n, r) > 1$ in \cite{v17, v20}, $r\in R$ and $r,n\in\mathbb{N}$. He studied such graphs for $m$ = 2 in \cite{v20}. Vilfred and Wilson \cite{vw1} - \cite{vw3} studied Type-2 isomorphism of $C_n(R)$ w.r.t. $m$ = 3,5,7 and obtained families of such graphs, $n\in\mathbb{N}$. In \cite{v25}, the author modified the definition of Type-2 isomorphism w.r.t. $m$ by considering $m$ as a divisor of $\gcd(n, r)$ and $r\in R$. 

In this paper, the author further modifies the definition of Type-2 isomorphism of circulant graphs $C_n(R)$ w.r.t. $m$ by considering $m > 1$ is a divisor of $\gcd(n, r)$, $m^3$ is a divisor of $n$ and $r\in R$. See definition \ref{d3.4}. The theory so far developed on Type-2 isomorphic circulant graphs by Vilfred and Wilson is presented under ``A study on Type-2 isomorphic circulant graphs" in ten parts and this paper is its first part. This paper contains 5 sections. Section 1 is introduction and contains contents in the nine parts of ``A study on Type-2 isomorphic circulant graphs'', preliminaries and Table 1 showing most of the symbols used in this paper.  Section 2 contains our study on Adam's or Type-1 isomorphism and Type-1 group of circulant graphs. Section 3 presents our study on Type-2 isomorphism and Type-2 isomorphic circulant graphs. Definition \ref{d3.4} is the modified definition of Type-2 isomorphism w.r.t. $m$ of circulant graph $C_{8n}(R)$.  We explain in details how Type-2 isomorphism is taking place. We prove that $(i)$ $C_{16}(1,2,7)$ and $C_{16}(2,3,5)$ are Type-2 isomorphic w.r.t. $m$ = 2 and present many problems on Type-1 and Type-2 isomorphic circulant graphs.  In Section 4, we prove that $(i)$ For $n \geq 2$, $k \geq 3$, $1 \leq 2s-1 \leq 2n-1$, $n \neq 2s-1$, $R$ = $\{2, 2s-1, 4n-(2s-1)\}$ and $S$ = $\{2, 2n-(2s-1), 2n+2s-1\}$, $C_{8n}(R)$ and $C_{8n}(S)$ are Type-2 isomorphic w.r.t. $m$ = 2, $n,s\in\mathbb{N}$; and $(ii)$ For $n \geq 2$, $1 \leq 2s-1 < 2s'-1 \leq [\frac{n}{2}]$, $0 \leq t \leq [\frac{n}{2}]$, $R$ = $\{2,2s-1, 2s'-1\}$ and $n,s,s'\in \mathbb{N}$, if $\theta_{n,2,t}(C_n(R))$ and $C_n(R)$ are  isomorphic circulant graphs of Type-2 w.r.t. $m$ = 2 for some $t$, then $n \equiv 0~(mod ~ 8)$, $2s-1+2s'-1$ = $\frac{n}{2}$, $2s-1 \neq \frac{n}{8}$, $t$ = $\frac{n}{8}$ or $\frac{3n}{8}$, $1 \leq 2s-1 \leq \frac{n}{4}$ and $n \geq 16$ where $\theta_{n,m,t}$ is a transformation used in definition \ref{d3.4} to define Type-2 isomorphism of circulant graphs. Section is the conclusion and contains suggestions for further research. A Visual Basic (VB) program POLY215.EXE is presented at the end of this paper to show visually how Type-2 isomorphism w.r.t. $m$ = 2 takes place on $C_{8n}(R)$ for $R$ = $\{2,2s-1,4n-(2s-1)\}$, $n\geq 2$, $1 \leq 2s-1 \leq 2n-1$ and $n,s\in\mathbb{N}$.

Effort to understand the isomorphism that exists between $C_{16}(1,2,7)$ and $C_{16}(2,3,5)$, pointed out by Elspas and Turner \cite{eltu}, and to develop its general theory are the motivation to develope this theory. For all basic ideas in graph theory, we follow \cite{ha69}.

\subsection{Abstracts of the ten parts of ``A study on Type-2 isomorphic circulant graphs''}

 The ten parts of ``A study on Type-2 isomorphic circulant graphs'' are \\
\\
Part 1: Type-2 isomorphic circulant graphs $C_n(R)$ w.r.t. $m$ = 2
\\
{\em Abstract:} Definition of Type-2 isomorphism w.r.t. $m$ of  circulant graph $C_n(R)$ is modified to $m$ divides $\gcd(n, r)$, $m^3$ divides $n$ and $r\in R$. Study on Type-2 isomorphism of circulant graphs $C_n(R)$ w.r.t. $m$ = 2 is presented. It is proved that $(i)$ $C_{16}(1,2,7)$ and $C_{16}(2,3,5)$ are Type-2 isomorphic w.r.t. $m$ = 2; $(ii)$ For $n \geq 2$, $k \geq 3$, $1 \leq 2s-1 \leq 2n-1$, $n \neq 2s-1$, $R$ = $\{2, 2s-1, 4n-(2s-1)\}$ and $S$ = $\{2, 2n-(2s-1), 2n+2s-1\}$, $C_{8n}(R)$ and $C_{8n}(S)$ are Type-2 isomorphic w.r.t. $m$ = 2, $n,s\in\mathbb{N}$; and $(iii)$ For $n \geq 2$, $1 \leq 2s-1 < 2s'-1 \leq [\frac{n}{2}]$, $0 \leq t \leq [\frac{n}{2}]$, $R$ = $\{2,2s-1, 2s'-1\}$ and $n,s,s'\in \mathbb{N}$, if $\theta_{n,2,t}(C_n(R))$ and $C_n(R)$ are  isomorphic circulant graphs of Type-2 w.r.t. $m$ = 2 for some $t$, then $n \equiv 0~(mod ~ 8)$, $2s-1+2s'-1$ = $\frac{n}{2}$, $2s-1 \neq \frac{n}{8}$, $t$ = $\frac{n}{8}$ or $\frac{3n}{8}$, $1 \leq 2s-1 \leq \frac{n}{4}$ and $n \geq 16$ where $\theta_{n,m,t}$ is a transformation used to define Type-2 isomorphism of a circulant graph. A VB program POLY215.EXE which shows how Type-2 isomorphism w.r.t. $m$ = 2 of $C_{8n}(R)$ takes place is presented for $R = \{2, 2s-1, 4n-(2s-1)\}$, $n \geq 2$ and $n,s\in {\mathbb N}$.\\
\\
Part 2: Type-2 isomorphic circulant graphs of orders 16, 24, 27
\\
{\em Abstract:} Type-2 isomorphic circulant graphs of orders 16, 24 and 27 are obtained and show that the total number of pairs of Type-2 isomorphic circulant graphs of orders 16 and 24 are 8 and 32, respectively and the total number of triples of Type-2 isomorphic circulant graphs of order 27 are 12. \\
\\
Part 3: 384 pairs of Type-2 isomorphic $C_{32}(R)$ 
\\
{\em Abstract:} Obtain all the 384 pairs of Type-2 isomorphic circulant graphs of order 32.\\
\\
Part 4: 960 triples of Type-2 isomorphic circulant graphs $C_{54}(R)$
\\
{\em Abstract:}  Study Type-2 isomorphic circulant graphs of order 54 and show that there are 960 triples of Type-2 isomorphic circulant graphs of order 54 and each triple of isomorphic circulant graphs is of Type-2 isomorphic w.r.t. $m$ = 3.\\
\\
Part 5: Type-2 isomorphic circulant graphs of orders 48, 81, 96
\\
{\em Abstract:}  Study Type-2 isomorphic circulant graphs of $C_{48}(r_1,r_2,r_3)$,  $C_{81}(r_1,r_2,r_3)$  and $C_{96}(r_1,r_2,r_3,r_4)$. Find that the total number of pairs of isomorphic circulant graphs of Type-2 w.r.t. $m$ = 2 of the forms  $C_{n}(r_1,r_2,r_3)$ and $C_{n}(s_1,s_2,s_3)$ are 18 and 72 for $n$ = 48, 96, respectively and the total number of triples of isomorphic circulant graphs of Type-2 w.r.t. $m$ = 3 of the form $C_{81}(x_1,x_2,x_3)$, $C_{81}(y_1,y_2,y_3)$ and $C_{81}(z_1,z_2,z_3)$ are 27.\\
\\
Part 6: Abelian groups $(T2_{n,m}(C_n(R)), \circ)$ and $(V_{n,m}(C_n(R)), \circ)$
\\
{\em Abstract:}  Define $V_{n,m}(C_n(R))$ and Type-2 set $T2_{n,m}(C_n(R))$ of $C_n(R)$ and present their properties. Prove that $(V_{n,m}(C_n(R)), \circ)$ is an Abelian group and $(T2_{n,m}(C_n(R)), \circ)$ is a subgroup of $(V_{n,m}(C_n(R)), \circ)$ where $T2_{n,m}(C_n(R))$ =  $\{C_n(R)\}$ $\cup$ $\{C_n(S):$ $C_n(S)$ is Typ-2 isomorphic to $C_n(R)$ w.r.t. $m \}$ and $(T2_{n,m}(C_n(R)), \circ)$ is the Type-2 group of $C_n(R)$ w.r.t. $m$. Present many examples of Type-1 and Type-2 groups where $T1_{n}(C_n(R))$ = $\{C_n(xR): x\in\varphi_{n}\}$ is the Type-1 set of $C_n(R)$ and $(T1_{n}(C_n(R)), \circ')$ is its Type-1 group.\\
\\
Part 7: Isomorphism series, digraph and graph of $C_n(R)$
\\
{\em Abstract:} Define {\em isomorphic set}, {\em isomorphism series}, {\em isomorphism digraph} $\mathcal{D}$ or {\em isomorphism diagram} and {\em isomorphism graph} $\mathcal{G}$ of circulant graphs and obtain these  corresponding to $C_{16}(R)$, $C_{27}(S)$ and $C_{54}(1,3,17,19)$ and present the isomorphism digraph and the isomorphism graph of $C_{432}(16, 27, 48, 54,$ $128, 160, 189)$ which has isomorphic circulant graphs of Type-2 w.r.t. $m$ = 2 as well as $m$ = 3.  Show that each pair of circulant graphs $C_{54}(1,3,17,19)$, $C_{54}(5,13,21,23)$;  $C_{54}(7, 11, 21, 25)$, $C_{54}(7, 11, 15, 25)$; and $C_{54}(1,3,17,19)$, $C_{54}(7,11,15,25)$ are isomorphic but they are neither of Type-1 nor of Type-2 w.r.t. $m$ = 3. More such circulant graphs are given in the conclusion. Also, define {\em diameter of isomorphic set} of $C_n(R)$ and {\em isomorphic distance} of $C_n(S)$ and $C_n(T)$ and obtained these values for some circulant graphs. \\
\\
Part 8: $C_{432}(R)$, $C_{6750}(S)$ - each has 2 types of Type-2 isomorphic circulant graphs
\\
{\em Abstract:} Obtain two families of circulant graphs, (i) family of circulant graphs $C_{432}(R)$, each has isomorphic circulant graphs of Type-2 w.r.t. $m$ = 2 as well as $m$ = 3; and (ii) family of circulant graphs $C_{6750}(S)$, each has isomorphic circulant graphs of Type-2 w.r.t. $m$ = 3 as well as $m$ = 5.\\
\\
Part 9: Computer programs to show Type-1 $\&$ -2 isomorphic circulant graphs
\\
{\em Abstract:} Present a $C^{++}$ computer program which was used to obtain families of Type-2 isomorphic $C_{n}(R)$ w.r.t. $m$ = 2,3,5,7 for  $n\in\mathbb{N}$ as well as $C_{np^3}(R)$ w.r.t. $m$ = $p$ for $n\in\mathbb{N}$ and $p$ is an odd prime. Also, present a VB program POLY415.EXE which is used to show how Type-1 and Type-2 isomorphisms of a circulant graph take place as well as for checking and finding Type-1 and Type-2 circulant graphs of a given order. \\
\\
Part 10: Type-2 isomorphic  $C_{np^3}(R)$ w.r.t. $m$ = $p$ and related groups
\\
{\em Abstract:} Families of Type-2 isomorphic circulant graphs $C_{np^3}(R)$ w.r.t. $m$ = $p$, and related Abelian groups are obtained where $p$ is a prime number and $n\in\mathbb{N}$. It is proved that for $i$ = 1 to $p$, circulant graphs $C_{np^3}(R^{np^3,x+yp}_i)$ are isomorphic of Type-2 w.r.t. $m$ = $p$ and they form Abelian group $(T2_{np^3,p}(C_{np^3}(R^{np^3,x+yp}_i)), \circ)$ where $T2_{np^3,p}(C_{np^3}(R^{np^3,x+yp}_i))$ = $\{\theta_{np^3,p,jn}(C_{np^3}(R^{np^3,x+yp}_i))$ = $C_{np^3}(R^{np^3,x+yp}_{i+j}) :$ $j$ = $0,1,...,p-1$ and $i+j$ in $C_{np^3}(R^{np^3,x+yp}_{i+j})$ is calculated under addition modulo $p \}$, $1 \leq x \leq p-1$, $0 \leq y \leq np - 1$, $1 \leq x+yp \leq np^2-1$, $y\in\mathbb{N}_0$, $p,np^3-p\in R^{np^3,x+yp}_i$ and $i,n,x\in\mathbb{N}$. A list of $T2_{np^3,p}(C_{np^3}(R^{np^3,x+yp}_i))$, each containing $p$ isomorphic circulant graphs $C_{np^3}(R^{np^3,x+yp}_i)$ of Type-2 w.r.t. $m$ = $p$, for $p$ = 3,5,7, $n$ = 1,2 and $y$ = 0 is given in the Annexure and more such families of Type-2 isomorphic circulant graphs are presented in \cite{v24}.

\subsection{Preliminaries}

We present here a few definitions and results that are required in the subsequent sections.  

If a graph $G$ is circulant, then its adjacency matrix $A(G)$ is circulant. It follows that if the first row of the adjacency matrix of a circulant graph is $[a_1,a_2,\dots, a_n]$, then $a_1$ = $0$ and $a_i$ = $a_{n-i+2}$, $2 \leq i \leq n$ \cite{da79}. Through-out this paper, for a set $R$ = $\{ r_1, r_2, \dots, r_k \}$, $C_n(R)$ denotes circulant graph $C_n(r_1,r_2,\dots,r_k)$ where $1 \leq r_1 < r_2 < . . . < r_k \leq [\frac{n}{2}]$. And we consider only connected circulant graphs of finite order, $V(C_n(R))$ = $\{v_0, v_1, v_2, . . . , v_{n-1}\}$ with $v_i$ adjacent to $v_{i+r}$ for each $r \in R$, subscript addition taken modulo $n$, $K_n$ = $C_n(1,2,...,n-1)$ and all cycles have length at least $3$, unless otherwise specified, $0 \leq i \leq n-1$. However when $\frac{n}{2} \in R$, edge $v_iv_{i+\frac{n}{2}}$ is taken as a single edge for considering the degree of the vertex $v_i$ or $v_{i+\frac{n}{2}}$ and as a double edge while counting the number of edges or cycles in $C_n(R)$, $0 \leq i \leq n-1$. 

We generally write $C_n$ for $C_n(1)$. We will often assume that the vertices of circulant graph $C_n(R)$, with-out further comment, are the corners of a regular $n$-polygon, labeled clockwise. In $C_n(R)$ when $r\in R$, it is understood that $r\in [1, [\frac{n}{2}]]$. Vertex $v_i$ in each figure is considered with label $i$, $v_i\in V(C_n(R))$ and $i\in\mathbb{Z}_n$. Circulant graphs $C_{16}(1,2,4,7)$ and $C_{16}(2,3,4,5)$ are shown in Figures 1 and 2. 
\begin{figure}[ht]
	\centerline{\includegraphics[width=5in]{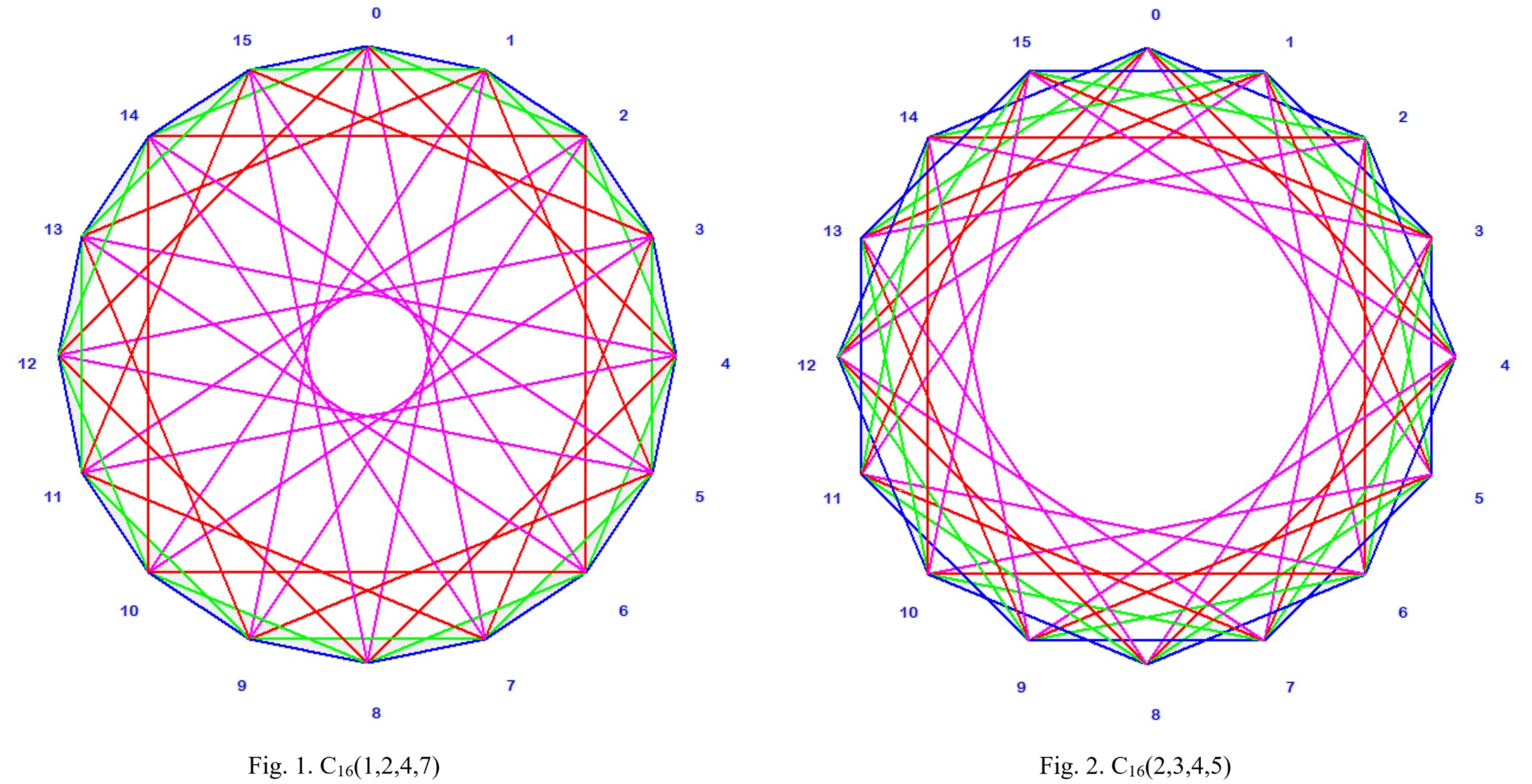}}
\end{figure}

Many symbols are used in this study and we present important symbols in the following Table 1 to make it easier to the readers. 

\begin{table}\label{t1}
	\caption{List of important symbols used in this paper}
	\begin{center}
		\scalebox{.9}{
	\begin{tabular}{||c||*{4}{c||}}\hline \hline 
				S. No. &  Symbol & Meaning 
				\\ \hline \hline 
				& & \\
				
1. & $C_n(1)$, $C_n(1,2,...,[n/2])$ & $C_n(1)$ = $C_n$, ~ ~ $C_n(1,2,...,[n/2])$ = $K_n$. \\
				
2. & $C_n(R)$ & Circulant graph of order $n$ with jump sizes \\
		    & & $r_1,r_2,...,r_k$, $R = \{r_1,r_2,...,r_k\} \subseteq \mathbb{Z}_{\frac{n}{2}}$ \\
			& & and $1 \leq r_1 < r_2 < ... < r_k \leq [n/2]$. \\
				
3. & $\mathbb{Z}_n$, $\mathbb{Z}^*_n$ & $\mathbb{Z}_n$ =  
           $\{0,1,2,...,n-1\}$, ~ ~ $\mathbb{Z}^*_n$ = $\mathbb{Z}_n \setminus \{0\}$.\\
				
4. & $\varphi_n$ & $\varphi_n$ = $\{ x \in \mathbb{Z}_n : \gcd(n, x) = 1 \}$.\\
				
5. & $\varphi_{n, x}:$ $S$ $\rightarrow$ $\mathbb{Z}_n$ &  $\ni$ $\varphi_{n,x}(s)$ = $xs$, $\forall$ $s \in S$, $S \neq \emptyset$, $S \subseteq \mathbb{Z}_n$ and $x \in \varphi_{n}$.\\ 

6. & $\varphi_{n,x}(C_n(R))$ = $Ad_{n,x}(C_n(R))$ &  = Adam's isomorphic circulant graph of $C_n(R)$ w.r.t. $x$ \\ 
			& \hfill = $T1_{n,x}(C_n(R))$  &  = $C_n(\varphi_{n,x}(R))$ =  $C_n(xR)$ where $x\in\varphi_n$ and $\varphi_{n,x}(R)$ in  \\
			& & \hfill $C_n(\varphi_{n,x}(R))$ is calculated  under reflexive modulo $n$. \\
				
7. & $Ad_n$ = $T1_n$, ~ $\varphi_{n,x} \circ \varphi_{n,y}$ & $Ad_n$ = $\{\varphi_{n,x}/~ x\in\varphi_n\}$, ~ $\varphi_{n,x} \circ \varphi_{n,y}$ = $\varphi_{n,xy}$, $x,y\in\varphi_n$. \\

8.  & $Ad_n(C_n(R))$ = $T1_n(C_n(R))$ & $= \{\varphi_{n,x}(C_n(R)) = C_n(xR)/ x\in\varphi_n\}$. \\

9. & $(\varphi_{n,x} \circ \varphi_{n,y})(C_n(R))$ & = $\varphi_{n,xy}(C_n(R))$ = $C_n((xy)R)$ \\	
          & & \hfill = $\varphi_{n,x}(C_n(R)) \circ \varphi_{n,y}(C_n(R))$, $x,y\in\varphi_n$. \\	

10. & $C_n(xR) \circ C_n(yR)$ & = $C_n((xy)R)$ = $\varphi_{n,xy}(C_n(R))$, $x,y\in\varphi_n$. \\	

11. & $(Ad_n(C_n(R)),~\circ)$ & = $(T1_n(C_n(R)),~\circ)$ = Adam's or Type-1 group of $C_n(R)$. \\
        
12. & $C_n(R) \cong_{T1_{n,x}} C_{n}(S)$ & $T1_{n,x}(C_n(R))$ (= $\varphi_{n,x}(C_n(R))$ = $C_{n}(xR)$) = $C_n(S)$, $x\in\varphi_n$. \\ 

13. & $C_n(R) \cong_{T1_{n}} C_{n}(S)$ & $C_n(R)$ and $C_n(S)$ are Adam's or Type-1 isomorphic. \\ 
         & & \\

14. & $\theta_{n,m,t}:$ $\mathbb{Z}_n$ $\rightarrow$ $\mathbb{Z}_n$ & $\ni$   $\theta_{n,m,t}(x) =  x+jtm$ where $x = qm+j$,  $0 \leq j \leq m-1$,  \\
		& & $m > 1$,  $m$ is a divisor of $\gcd(n,r)$, $m^3$ is a divisor of $n$,   \\
		& & $r\in\mathbb{Z}^*_n$, $m,x\in\mathbb{Z}_n$ and  $0 \leq q,t \leq \frac{n}{m}-1$. \\
				
15. & $\theta_{n,m,t} \circ \theta_{n,m,t'}$ = $\theta_{n,m,t+t'}$  &  where $0 \leq t,t' \leq \frac{n}{m}-1$, $m > 1$, $m$ is a divisor of  $\gcd(n,r)$,  $m^3$ is \\
		& &  a divisor of  $n$  and $t+t'$ is calculated under arithmetic $mod~\frac{n}{m}$. \\
				
16. & $\theta_{n,m,t}:$ $V(C_n(R))$ $\rightarrow$ $V(K_n)$  & $\ni$ $\theta_{n,m,t}(v_x) = u_{x+jtm}$, $\forall$ $v_x\in V(C_n(R))$, \\
		& & $\theta_{n,m,t}((v_x, v_y))$ = $ (\theta_{n,m,t}(v_x), \theta_{n,r,t}(v_y))$ and \\ 
		&  & $\theta_{n,m,t}(C_n(R)) = C_n(\theta_{n,m,t}(R))$ where $\theta_{n,m,t}(R)$ in \\
		& &   $C_n(\theta_{n,m,t}(R))$  is calculated under reflexive modulo $n$, \\
		& &  $x = qm+j$,  $m > 1$, $0 \leq j \leq m-1$, $0 \leq q,t \leq \frac{n}{m}-1$,  \\
		& &  $m$ is a divisor of  $\gcd(n,r)$, $m^3$ is a divisor of  $n$, $r\in R$,  \\ 
		& &  $V(C_n(R))$ = $\{v_0, v_1, v_2, . . . , v_{n-1}\}$, \\ 
		& & $V(K_n)$ =  $\{u_0, u_1, u_2, . . . , u_{n-1}\}$ and $(v_x, v_y)\in E(C_n(R))$. \\
       & & \\
				
17. & $V_{n, m}$  & = $\{\theta_{n,m,t}/ t = 0,1,2,...,\frac{n}{m}-1\}$, $m > 1$, $m$ is a divisor of $\gcd(n,r)$, \\

& &   $m^3$ is a divisor of $n$ and $r\in\mathbb{Z}^*_n$. \\
				
18. & $V_{n, m}(s)$  & $= \{\theta_{n,m,t}(s)/ t = 0,1,2,...,\frac{n}{m}-1\}$, $r\in\mathbb{Z}^*_n$, $s\in\mathbb{Z}_n$, $m > 1$,   \\
		& & \hfill $m$ is a divisor of $\gcd(n,r)$ and $m^3$ is a divisor of $n$.  \\
				
19. & $V_{n, m}(C_n(R))$ & $= \{\theta_{n,m,t}(C_n(R))/ t = 0,1,2,...,\frac{n}{m}-1\}$, $m > 1$, \\
		& & \hfill $m$ is a divisor of $\gcd(n,r)$, $m^3$ is a divisor of $n$ and $r\in R$. \\
     & & \\ 	
				
20. &  $C_n(R)$ and $C_n(S)$ are Type-2 &  if $\theta_{n,m,t}(C_n(R)) = C_n(S)$ for some $t$, $1 \leq t \leq \frac{n}{m}-1$, $m > 1$, \\
		& isomorphic w.r.t. $m$ &  $m$ is a divisor of $\gcd(n,r)$, $m^3$ is a divisor of $n$, $r\in R,S$ and \\	
		&  &   $C_n(S) \neq C_n(xR)$, $\forall$ $x\in\varphi_n$. \\
				
21. & $T2_{n, m}(C_n(R))$ = & $\{C_n(R)\} \cup \{C_n(S): C_n(S)$ and $C_n(R)$ are Type-2 isomorphic w.r.t. $m$, \\
	& &    $m > 1$, $m$ is a divisor of $\gcd(n,r)$, $m^3$ is a divisor of $n$ and $r\in R,S\}$.  \\ 
				
22. & $(T2_{n, m}(C_n(R)),~\circ)$ & = Type-2 group of $C_n(R)$ w.r.t. $m$, $m > 1$, $m$ is a divisor of $\gcd(n,r)$, \\
		& & \hfill $m^3$ is a divisor of $n$ and $r\in R$.\\
     & & \\ 					

23. & $T2_{n, m,t}(C_{n}(R))$ is same as & $\theta_{n,m,t}(C_n(R))$ = $C_n(S)$ for some $S$ and $C_n(S)\in T2_{n,m}(C_n(R))$,  $m > 1$, \\ 
& &  $m$ is a  divisor of  $\gcd(n,r)$, $m^3$ is a divisor of $n$, $r\in R,S$, $1 \leq t \leq \frac{n}{m}-1$.  \\
 
24. & $C_n(R) \cong_{T2_{n,m,t}} C_{n}(S)$ &  $\theta_{n,m,t}(C_n(R))$ = $C_n(S)$ and $C_n(S)\in T2_{n,m}(C_n(R))$, $1 \leq t \leq \frac{n}{m}-1$, \\
   & &  $m > 1$, $m$ is a  divisor of  $\gcd(n,r)$, $m^3$ is a divisor of $n$ and $r\in R,S$.  \\ 

25. & $C_n(R)$ $\cong_{T2_{n,m}}$ $C_n(S)$ &   $C_n(S) \in T2_{n,m}(C_{n}(R))$ if and only if $T2_{n,m}(C_{n}(R)) $ = $T2_{n,m}(C_{n}(S))$. \\
     & & \\ 	 	\hline \hline 
	\end{tabular}}
	\end{center}
\end{table}

\begin{definition} {\rm\cite{v96}}  \label{d1.1} \quad Let $n$ and $s$ be positive integers with $n \geq 4$ and $s < \frac{n}{2}$. Then, clearly, $C_n(s)$ consists of a collection of cycles $(v_0 v_s v_{2s} \dots v_0)$, $(v_1 v_{1+s} v_{1+2s} \dots v_1)$, $\dots$, $(v_{s-1} v_{2s-1} v_{3s-1} \dots v_{s-1})$. If $d$ = $\gcd(n, s)$, then there are $d$ such disjoint cycles, each of length $\frac{n}{d}$. We say that each of these cycles is of {\em period} $s$, {\em length} $\frac{n}{d}$ and {\em rotation} $\frac{s}{d}$.
\end{definition}

If $s$ = $\frac{n}{2}$, then clearly $C_n(s)$ is just a 1-factor. Since $C_n(R)$ is just the union of the cycles $C_n(s)$ for $s \in R$, we have a decomposition of $C_n(R)$.

\begin{theorem}{\rm\cite{v96}  \quad \label{t1.2} Let $r\in R$.  In $C_n(R)$, the length of a cycle of period $r$ is $\frac{n}{\gcd(n,r)}$ and the number of disjoint periodic cycles of period $r$ is $\gcd(n,r)$, $r \in R$. \hfill $\Box$}
\end{theorem}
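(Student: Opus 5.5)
The statement is an elementary fact about the subgraph $C_n(r)$, so I would prove it directly by looking at the orbits of the translation $i \mapsto i+r$ on $\mathbb{Z}_n$. Fix $r\in R$ and set $d=\gcd(n,r)$. The edges of $C_n(R)$ of period $r$ are exactly the edges $v_iv_{i+r}$, $i\in\mathbb{Z}_n$, and these form the spanning subgraph $C_n(r)$. Starting from any vertex $v_i$ and repeatedly following jump $r$, we obtain the closed walk $v_i, v_{i+r}, v_{i+2r}, \dots$, with subscripts taken modulo $n$.

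\textbf{Step 1: length of the cycle.} The walk first returns to $v_i$ at the least positive $k$ with $kr\equiv 0 \pmod n$. Writing $n=dn'$ and $r=dr'$ with $\gcd(n',r')=1$, the condition $n\mid kr$ becomes $n'\mid kr'$, hence $n'\mid k$. So the least such $k$ is $n'=\frac{n}{d}$. The vertices $v_{i+jr}$, $0\le j<\frac{n}{d}$, are pairwise distinct: if two coincided, their difference $(j-j')r$ would be a multiple of $n$ with $0<|j-j'|<\frac nd$, contradicting minimality. Hence the walk is a cycle of length $\frac{n}{\gcd(n,r)}$.

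\textbf{Step 2: number of cycles.} The vertex set of the cycle through $v_i$ is $\{v_{i+jr}\}$. Since the subgroup of $\mathbb{Z}_n$ generated by $r$ equals the subgroup generated by $d$, this set is exactly $\{v_x : x\equiv i \pmod d\}$. Therefore two such cycles are either identical or vertex-disjoint, and they are indexed by the residues of $i$ modulo $d$. This gives exactly $d=\gcd(n,r)$ disjoint periodic cycles, which is consistent with Definition \ref{d1.1}, where the cycles start at $v_0,\dots,v_{d-1}$. As a check, $d\cdot\frac nd=n$.

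\textbf{Main obstacle: degenerate cases.} The only real care needed is here. If $r=\frac n2$, then $\frac nd=2$ and each "cycle" is the double edge $v_iv_{i+n/2}$. This agrees with the paper's convention of treating that edge as a double edge when counting cycles, so the formula still holds, with cycles of length $2$ forming a 1-factor. If $\frac nd\ge3$, the objects obtained are genuine cycles.
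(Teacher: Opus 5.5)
Your proof is correct and is the orbit argument the paper leaves implicit: the cycle through $v_i$ is the coset $i+\langle \gcd(n,r)\rangle$ of $\mathbb{Z}_n$. The paper gives no proof of this theorem beyond the citation to \cite{v96} and the ``clearly'' decomposition of $C_n(s)$ into the cycles $(v_i v_{i+s} v_{i+2s}\dots)$ in Definition~\ref{d1.1}.

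One small correction to your cross-reference: Definition~\ref{d1.1} lists the starting vertices as $v_0,\dots,v_{s-1}$, not $v_0,\dots,v_{d-1}$. Your indexing by residues modulo $d$ is therefore the accurate version, since the paper's list repeats cycles whenever $d<s$.
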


\begin{cor}{\rm \cite{v96} \quad \label{t1.3}  In $C_n(R)$, length of a cycle of period $r$ is $n$ if and only if $\gcd(n,r)$ = $1,$ $r \in R$. \hfill $\Box$}
\end{cor}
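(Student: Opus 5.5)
The statement to prove is Corollary \ref{t1.3}: in $C_n(R)$, a cycle of period $r$ has length $n$ if and only if $\gcd(n,r)=1$, for $r\in R$. I will derive it directly from Theorem \ref{t1.2}. That theorem says a cycle of period $r$ has length $\frac{n}{\gcd(n,r)}$, so the corollary reduces to the arithmetic equivalence
\[
\frac{n}{\gcd(n,r)} = n \iff \gcd(n,r) = 1 .
\]
Since $n \geq 1$ and $\gcd(n,r)$ is a positive integer, the equation $n/d = n$ holds exactly when $d = 1$. Both directions follow immediately.

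To make the plan self-contained, I would also recall why Theorem \ref{t1.2} holds, using Definition \ref{d1.1}. Starting at $v_i$ and repeatedly adding $r$ modulo $n$, we return to $v_i$ after $k$ steps exactly when $n \mid kr$. This is equivalent to $\frac{n}{d} \mid k\frac{r}{d}$ with $d=\gcd(n,r)$. Since $\gcd\bigl(\frac{n}{d},\frac{r}{d}\bigr)=1$, it is equivalent to $\frac{n}{d}\mid k$, so the least such $k$ is $\frac{n}{d}$. Taking this as given from the earlier theorem, the corollary is one line.

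The only point needing care is the degenerate case $r=\frac{n}{2}$. Here $C_n(r)$ is a 1-factor, the "cycle" has length $2$, and this equals $n$ only when $n=2$. I would note that for $n\geq 4$ we have $\gcd(n,\frac{n}{2})=\frac{n}{2}>1$ and the length $2$ is less than $n$, so the equivalence still holds under the paper's convention of treating the edge $v_iv_{i+\frac{n}{2}}$ as a double edge. I expect no genuine obstacle; this boundary case is the only step where the convention must be invoked explicitly.
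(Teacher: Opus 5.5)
Your proposal is correct and matches the paper, which states this without proof as an immediate consequence of Theorem \ref{t1.2}: the length $\frac{n}{\gcd(n,r)}$ equals $n$ exactly when $\gcd(n,r)=1$. Your separate treatment of $r=\frac{n}{2}$ is harmless but unnecessary, since the formula $\frac{n}{\gcd(n,r)}$ already gives length $2$ in that case and no appeal to the double-edge convention is needed.
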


\begin{rem}{\rm\cite{v96}\quad  \label{r1.4}  Let $|R|$ = $k$. Then, the circulant graph $C_n(R)$ for a set $R$ = $\{ r_1, r_2, \dots, r_k \}$ is $(2k-1)$-regular if $\frac{n}{2} \in R$ and $2k$-regular otherwise. }
\end{rem} 

The following Lemmas are useful to obtain one-to-one mappings.

\begin{lemma}{\rm\cite{v13}\quad \label{l1.5} Let $A$ and $B$ be two non-empty sets and $f:$ $A \rightarrow B$ be a mapping. Then, $f$ is one-to-one if and only if $f/_{A'}$ is one-to-one for every non-empty subset $A'$ of $A$.\hfill $\Box$}
\end{lemma}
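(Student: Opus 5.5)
The statement to prove is Lemma \ref{l1.5}: $f$ is one-to-one if and only if its restriction $f/_{A'}$ is one-to-one for every non-empty subset $A'$ of $A$. I will prove the two implications separately, working directly from the definition of injectivity: $f(a)=f(b)$ implies $a=b$.

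For the forward direction, assume $f$ is one-to-one and let $A'\subseteq A$ be non-empty. Take $a,b\in A'$ with $f/_{A'}(a)=f/_{A'}(b)$. By definition of the restriction, this means $f(a)=f(b)$. Since $a,b\in A$ and $f$ is one-to-one, $a=b$. Hence $f/_{A'}$ is one-to-one.

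For the converse, assume every restriction $f/_{A'}$ to a non-empty $A'\subseteq A$ is one-to-one. The simplest choice is $A'=A$, which is allowed because $A$ is non-empty by hypothesis. Then $f/_{A}=f$, so $f$ is one-to-one.

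Alternatively, to stay closer to how the lemma will be applied later, one could take $a,b\in A$ with $f(a)=f(b)$ and set $A'=\{a,b\}$. The hypothesis that $f/_{A'}$ is one-to-one then forces $a=b$. This version shows that injectivity only needs to be checked on two-element (or small) subsets, for example on each periodic cycle class of vertices in $C_n(R)$. This is presumably why the lemma is stated, since it lets injectivity of maps such as $\theta_{n,m,t}$ be verified piecewise.

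There is no genuine obstacle. The only point needing care is that the converse uses the non-emptiness of $A$, which the hypothesis provides, so that $A'=A$ (or $A'=\{a,b\}$) is an admissible subset.
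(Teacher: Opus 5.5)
Your proof is correct: the paper states this lemma without proof (citing \cite{v13}), and your argument, restriction for the forward direction and $A'=A$ (or $A'=\{a,b\}$) for the converse, is the standard and complete one. One caution applies only to your side remark, not to the proof: checking injectivity on each periodic cycle class is checking on the blocks of a partition, not on all two-element subsets, since a pair $a,b$ may lie in different blocks; block-wise injectivity alone does not give injectivity of $f$ unless the images of distinct blocks are disjoint, and that disjointness is what actually holds for $\theta_{n,m,t}$ because it maps each residue class $\{qm+j\}$ modulo $m$ into itself.
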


\begin{lemma}{\rm\cite{v13}\quad \label{l1.6} Let $A$ and $B$ be non-empty sets and $A_1,A_2,\dots,A_k$ be a partition of $A$ (each $A_i$ being non-empty, $i$ = $1,2,\dots,k$). Let $f:$ $A$ $\rightarrow$ $B$ be a mapping. Then $f$ is one-to-one if and only if $f/_{A_i}$ is one-to-one for every $i,$ $i$ = $1,2,\dots,k.$ \hfill $\Box$}
\end{lemma}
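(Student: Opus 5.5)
The plan is to prove the ``if'' direction first, since it is immediate. Suppose $f|_{A_i}$ is one-to-one for every $i$, and take $x,y\in A$ with $f(x)=f(y)$. We want to conclude $x=y$. Because $A_1,A_2,\dots,A_k$ partition $A$, there are unique indices $i,j$ with $x\in A_i$ and $y\in A_j$.

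If $i=j$, injectivity of $f|_{A_i}$ gives $x=y$ directly. If $i\neq j$, then $x\neq y$, because distinct blocks of a partition are disjoint. We then need $f(x)\neq f(y)$ to contradict $f(x)=f(y)$.

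This cross-block case is the main obstacle. Injectivity on each block alone does not force it: take $A=\{1,2\}$, $A_1=\{1\}$, $A_2=\{2\}$, and $f$ constant. Each restriction is trivially injective, yet $f$ is not. So, as literally stated, the ``if'' direction needs an extra hypothesis. The natural one, and the way the lemma is used later to build one-to-one maps such as $\theta_{n,m,t}$ on residue classes, is that the images $f(A_1),\dots,f(A_k)$ are pairwise disjoint. I would add this hypothesis explicitly, or read it as implicit in the intended use. Under it, $f(x)\in f(A_i)$ and $f(y)\in f(A_j)$ with $f(A_i)\cap f(A_j)=\emptyset$, so $f(x)\neq f(y)$. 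That completes the ``if'' direction.

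For the ``only if'' direction, I would invoke Lemma \ref{l1.5}: if $f$ is one-to-one, then its restriction to every non-empty subset is one-to-one. In particular each $f|_{A_i}$ is one-to-one. One could also check this directly, since equal images of two points of $A_i$ under $f|_{A_i}$ are equal images under $f$.
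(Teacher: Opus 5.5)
Your diagnosis is correct. The paper gives no proof of this lemma; it is quoted from \cite{v13} and closed with $\Box$. As literally stated, the ``if'' direction is false. Your constant map on $A=\{1,2\}$ with blocks $\{1\},\{2\}$ is a valid counterexample, and the same idea works for any partition with $k\geq 2$.

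Your repair, adding the hypothesis that $f(A_1),\dots,f(A_k)$ are pairwise disjoint, is the right one. It is also what the paper implicitly relies on. In Conclusion item 4 the lemma is applied with $A_j=V(\Gamma_j)$, the residue classes mod $m$, and the text notes that each $V(\Gamma_j)$ is mapped onto itself, so the images are disjoint. Property (iv) in the proof of Lemma \ref{l3.1} uses the same mechanism: $\theta_{n,m,t}$ preserves residues mod $m$, which handles the cross-block case. Under that hypothesis your argument is complete in both directions, and the ``only if'' direction via Lemma \ref{l1.5} is fine.

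If you want a genuine equivalence, state it as follows: $f$ is one-to-one if and only if every $f|_{A_i}$ is one-to-one and the sets $f(A_1),\dots,f(A_k)$ are pairwise disjoint. Disjointness is also necessary. If $f$ is injective and $f(x)=f(y)$ with $x\in A_i$, $y\in A_j$, $i\neq j$, then $x=y$, which contradicts $A_i\cap A_j=\emptyset$.
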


\begin{theorem}{\rm \cite{v17} \quad \label{t1.7} If $C_n(R)$ and $C_n(S)$ are isomorphic, then there exists a bijection $f$ from $R \to S$ such that $\gcd(n, r)$ = $\gcd(n, f(r))$ for all $r\in R$.  }
\end{theorem}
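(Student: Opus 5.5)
Theorem \ref{t1.7} says: if $C_n(R)$ and $C_n(S)$ are isomorphic, then there is a bijection $f: R\to S$ with $\gcd(n,r)=\gcd(n,f(r))$ for every $r\in R$. The plan is to recover the multiset $\{\gcd(n,r): r\in R\}$ from isomorphism invariants of $C_n(R)$.

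\textbf{Reformulation.} For a divisor $d$ of $n$, set $N_R(d)=|\{r\in R: \gcd(n,r)=d\}|$. Producing $f$ is equivalent to proving $N_R(d)=N_S(d)$ for every $d\mid n$, because we can then match the elements of $R$ and $S$ class by class. By Theorem \ref{t1.2}, the jumps $r$ with $\gcd(n,r)=d$ are exactly those whose periodic cycles have length $n/d$. So the task is to show that the number of jump sizes producing periodic cycles of each given length is determined by the abstract graph.

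\textbf{Main obstacle.} An isomorphism $\phi: C_n(R)\to C_n(S)$ need not send periodic cycles to periodic cycles. This is exactly the phenomenon behind Type-2 isomorphism, so a purely combinatorial argument looks hard. I would therefore go spectral.

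\textbf{Spectral step.} The eigenvalues of $C_n(R)$ are
\[
\lambda_j=\sum_{r\in R}\left(\omega^{jr}+\omega^{-jr}\right),\qquad \omega=e^{2\pi i/n},
\]
with the obvious halving when $r=n/2$. Isomorphic graphs have the same spectrum. For each divisor $e$ of $n$, group the indices $j$ with $\gcd(j,n)=e$ and apply the Galois group of $\mathbb{Q}(\omega)$: these eigenvalues form the orbit of $\lambda_e$ under $\omega\mapsto\omega^a$, $a\in\varphi_n$.

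For each $d\mid n$, consider the sums
\[
\sum_{\gcd(j,n)=e} \lambda_j \;=\; \sum_{r\in R} c_{n/\gcd(n,e)}(r),
\]
where $c_q(r)$ denotes Ramanujan sums. Since $c_q(r)$ depends only on $\gcd(r,q)$, these sums depend only on the counts $N_R(d)$.

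\textbf{The hard point.} These sums are not automatically spectral invariants: the spectrum is a multiset, and the class $\{j:\gcd(j,n)=e\}$ is not obviously recoverable from it. To get around this, I would use trace identities. The number of closed walks, $\operatorname{tr}A^k=\sum_j\lambda_j^k$, is an isomorphism invariant. Alternatively, I would use the fact that the full spectrum, together with the rationality structure of eigenvalues (Galois orbits are unions), determines the multiset of orbit sums.

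Then I would invert the Ramanujan-sum system by M\"obius inversion over the divisor lattice, using the invertibility of the matrix $\bigl(c_q(d)\bigr)_{q,d\mid n}$. This yields $N_R(d)=N_S(d)$ for all $d$, and the bijection $f$ follows.

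Should the Galois-orbit identification fail, I would fall back on citing the known result from \cite{v17}, which is where this theorem is attributed.
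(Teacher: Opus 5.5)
The proposal has a genuine gap. Your reformulation (it suffices to show $N_R(d)=N_S(d)$ for every $d\mid n$) and the final inversion via Ramanujan sums are fine. The argument breaks at exactly the step you call the hard point, and neither remedy closes it.

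The power sums $\operatorname{tr}A^k=\sum_j\lambda_j^k$ determine, and are determined by, the spectrum as a multiset, so they add nothing beyond cospectrality. The multiset of eigenvalues does not record the index $j$ at which each eigenvalue occurs, so it does not tell you which class $\{j:\gcd(j,n)=e\}$ to sum over. The Galois refinement does not help either: a rational eigenvalue is a Galois orbit by itself, and the spectrum gives no way to tell which class it came from. Take the extreme case of integral circulants, where $R\cup(n-R)$ is a union of sets $D_d=\{x\in\mathbb{Z}_n:\gcd(x,n)=d\}$ and all eigenvalues are integers. There your claim that the spectrum determines the class sums amounts to saying that cospectral integral circulants $X_n(\mathcal{D}_1)$ and $X_n(\mathcal{D}_2)$ have $\mathcal{D}_1=\mathcal{D}_2$. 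That is a well-known conjecture of So, and it cannot follow from Galois bookkeeping, because the Galois structure is trivial in that case. As written, your proof uses only that the two graphs are cospectral, and you have not shown that cospectrality forces equal gcd-profiles.

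What is missing is an argument that uses the isomorphism itself. Let $\rho$ be the rotation $v_i\mapsto v_{i+1}$ and $P_d=\frac{d}{n}\sum_{k=0}^{n/d-1}\rho^{kd}$. Then $\operatorname{tr}(A_RP_d)=d\,\bigl|(R\cup(n-R))\cap d\mathbb{Z}_n\bigr|$, and these numbers for all $d\mid n$ determine $N_R$ by M\"obius inversion. If $\Phi$ is the permutation matrix of an isomorphism with $A_S=\Phi A_R\Phi^{-1}$, the corresponding quantity for $S$ is $\operatorname{tr}(A_R\,\Phi^{-1}P_d\Phi)$. That is the same trace computed with the regular cyclic subgroup $\Phi^{-1}\langle\rho\rangle\Phi$ of $\operatorname{Aut}(C_n(R))$. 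When the pair is not Adam's isomorphic (the Type-2 situation), this subgroup is not conjugate to $\langle\rho\rangle$ inside $\operatorname{Aut}(C_n(R))$. So the equality you need compares non-conjugate regular cyclic subgroups, and proving it requires group-theoretic or Schur-ring input, not a spectral count. Falling back on \cite{v17} does not fill the gap either. The paper's own proof of this theorem is the single sentence that it goes by induction on the order of $R$, with no inductive step given, so there is nothing there to cite for the missing step.
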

\begin{proof} The proof is by induction on the order of $R$. 
\end{proof}

\section{Adam's or Type-1 Isomorphism and Type-1 group of circulant graphs}

In this section, we present our study on Adam's isomorphism or Type-1 isomorphism of circulant graph $C_n(R)$ and define Type-1 group of $C_n(R)$. 

\begin{lemma}{\rm \cite{v17} \quad \label{l2.1} Let $S \subseteq \mathbb{Z}_n$, $S \neq \emptyset$ and $x \in \mathbb{Z}_n.$ Define a mapping $\varphi_{n, x}:$ $S$ $\rightarrow$ $\mathbb{Z}_n$ $\ni$ $\varphi_{n, x}(s)$ = $xs$, $\forall$ $s \in S,$ under multiplication modulo $n$. Then $\varphi_{n, x}$ is bijective if and only if $S = \mathbb{Z}_n$ and $\gcd(n, x) = 1$. }
\end{lemma}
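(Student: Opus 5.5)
The plan is to prove the two directions separately. Surjectivity onto $\mathbb{Z}_n$ pins down $S$, and injectivity pins down $\gcd(n,x)$. Everything is elementary arithmetic in $\mathbb{Z}_n$ together with a counting argument on finite sets.

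For the direction ``$\Leftarrow$'', assume $S = \mathbb{Z}_n$ and $\gcd(n,x) = 1$. By B\'ezout there are integers $y,z$ with $xy + nz = 1$, so $xy \equiv 1 \pmod n$. Then $\varphi_{n,y}:\mathbb{Z}_n \rightarrow \mathbb{Z}_n$ satisfies
\[ \varphi_{n,y}(\varphi_{n,x}(s)) = yxs = s \quad\text{and}\quad \varphi_{n,x}(\varphi_{n,y}(s)) = s \pmod n \]
for all $s\in\mathbb{Z}_n$. Hence $\varphi_{n,x}$ has a two-sided inverse and is bijective. Alternatively, injectivity follows because $xs \equiv xs' \pmod n$ gives $n \mid x(s-s')$, and hence $n \mid (s-s')$ since $\gcd(n,x)=1$. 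A self-map of the finite set $\mathbb{Z}_n$ that is one-to-one is also onto.

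For the direction ``$\Rightarrow$'', assume $\varphi_{n,x}: S \rightarrow \mathbb{Z}_n$ is bijective.

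First, since $\varphi_{n,x}$ is onto $\mathbb{Z}_n$, we get $|S| \geq |\mathbb{Z}_n| = n$. Since also $S \subseteq \mathbb{Z}_n$, this forces $|S| = n$ and so $S = \mathbb{Z}_n$.

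Second, suppose for contradiction that $d = \gcd(n,x) > 1$. Then $n/d$ is an element of $\mathbb{Z}_n$ different from $0$, because $1 \le n/d < n$. Both $0$ and $n/d$ lie in $S = \mathbb{Z}_n$, and
\[ \varphi_{n,x}(n/d) = x\cdot\tfrac{n}{d} = \tfrac{x}{d}\, n \equiv 0 = \varphi_{n,x}(0) \pmod n. \]
This contradicts injectivity. (Equivalently, one can note that $\varphi_{n,x}$ restricted to $\{0, n/d\}$ is not one-to-one and invoke Lemma \ref{l1.5}.) Therefore $\gcd(n,x)=1$.

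There is no real obstacle in this argument. The only point needing care is the reading of ``bijective'': it must mean bijective onto the stated codomain $\mathbb{Z}_n$, which is what forces $S=\mathbb{Z}_n$. For a proper subset $S$, $\varphi_{n,x}$ can be injective without being onto $\mathbb{Z}_n$, and the statement then correctly says it is not bijective. The degenerate case $x = 0$ with $n>1$ is covered by the same argument, since then $d = n$ and $n/d = 1 \neq 0$.
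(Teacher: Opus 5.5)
Your proof is correct, and its overall structure matches the paper's. A cardinality observation disposes of $S\neq\mathbb{Z}_n$, since a proper subset cannot map onto $\mathbb{Z}_n$. For $S=\mathbb{Z}_n$, bijectivity reduces to the multiples $0,x,2x,\dots,(n-1)x$ being distinct modulo $n$.

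Where you differ is in how you obtain the coprimality equivalence. The paper gets both directions at once from Corollary \ref{t1.3}: the periodic cycle of jump size $x$ has length $n$ (that is, those multiples are all distinct) exactly when $\gcd(n,x)=1$. You argue arithmetically instead. For sufficiency you use B\'ezout to build the explicit inverse $\varphi_{n,y}$ with $xy\equiv 1 \pmod n$. For necessity you exhibit the collision $x\cdot\frac{n}{d}\equiv 0$. Your $n/d$ is precisely the cycle length $n/\gcd(n,x)$ of Theorem \ref{t1.2}, so both proofs rest on the same fact, seen number-theoretically rather than graph-theoretically.

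The paper's route is shorter and fits its periodic-cycle viewpoint. However, it leans on a prior result and implicitly treats $x$ as a jump size of some circulant graph: this requires $x\neq 0$, and $x$ must be replaced by $n-x$ when $x>\frac{n}{2}$. Your route is self-contained and handles $x=0$ explicitly. It also produces the inverse directly, and that inverse is exactly the element $x^*$ with $xx^*=1$ the paper later needs to make $(\varphi_n,\circ)$ and $(Ad_n(C_n(R)),\circ)$ groups.
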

\begin{proof}\quad Here, we give a proof using a property of periodic cycles in a circulant graph $C_n(R)$ with jump size $x$. Let $S = \mathbb{Z}_n$. Then the numbers $0,$ $x,$ $2x,$ $3x,$ $\dots,$ $(n-1)x$, under multiplication modulo $n$, are all distinct if and only if $\gcd(n, x)$ = 1 since the cycle of period $x$ in $C_n(R),$ $x \in R$, is of length $n$ if and only if $\gcd(n, x)$ = 1, using Corollary \ref{t1.3}.
	
Conversely, if $S \neq \mathbb{Z}_n$, then $S$ is a proper subset of $\mathbb{Z}_n$ and so $\varphi_{n,x}$ is not a bijective mapping. Hence the result follows. 
\end{proof} 

Hereafter, unless otherwise it is mentioned in other way, we consider $\varphi_{n,x}$ with $\gcd(n,x)$ = 1 only. Let $\varphi_n$ = $\{ x \in \mathbb{Z}_n : \gcd(n, x) = 1 \}$. Clearly,  $( \varphi_n,~\circ)$ is an abelian group under the binary operation $\lq\circ\rq$,  multiplication modulo $n$.

\begin{definition}{\rm\cite{ad67}} \quad \label{d2.2} For $R =$ $\{r_1$, $r_2$, $\dots$, $r_k\}$ and $S$ = $\{s_1$, $s_2$, $\dots$, $s_k\}$, circulant graphs $C_n(R)$ and $C_n(S)$ are {\it Adam's isomorphic} if there exists a positive integer $x$ $\ni$ $\gcd(n, x)$ = 1 and $S$ = $\{xr_1$, $xr_2$, $\dots$, $xr_k\}_n^*$ where $<r_i>_n^*$, the {\it reflexive modular reduction} of a sequence $< r_i >$, is the sequence obtained by reducing each $r_i$ under modulo $n$ to yield $r_i'$ and then replacing all resulting terms $r_i'$ which are larger than $\frac{n}{2}$ by $n-r_i'.$  
\end{definition}

The following definition based on Lemma \ref{l2.1} is same as the above.

\begin{definition}  \label{d2.3} Let $n\in\mathbb{N}$. For a subset $S$ of $\mathbb{Z}_n$ one can define the circulant graph $C_n(S)$ as the graph with vertex set $\mathbb{Z}_n$ and edge set $\{\{x, y\} | x,y\in \mathbb{Z}_n,$ $x-y\in S\}$. For an integer $k$ coprime to $n$, there is a very natural isomorphism between circulants $C_n(S)$ and $C_n(kS)$  namely the map $\varphi_k : x \mapsto kx$ (observe that $\varphi_k$ is an automorphism of the group $\mathbb{Z}_n$) where $kS$ = $\{ks : s\in S\}$. In this case, we say that $C_n(S)$ and $C_n(kS)$ are \emph{Adam's isomorphic} or \emph{Type-1 isomorphic} and we call the isomorphism as \emph{Type-1 isomorphism}.  
\end{definition}

Circulant graphs $C_{16}(1,2,4,7)$ and $C_{16}(3,4,5,6)$ are Adam's isomorphic since $C_{16}(3(1,2,4,7))$ = $C_{16}(3(1,2,4,7, 9,12,14,15))$ = $C_{16}(3,6,12,5, 11,4,10,13))$ = $C_{16}(3,4,5,6, 10,11,12,13))$ = $C_{16}(3,4,5,6)$. Circulant graphs $C_{16}(3,4,5,6)$ and $C_{16}(3(1,2,4,7))$ = $C_{16}(3,4,5,6)$ are show in Figures 3 and 4.

\begin{figure}[ht]
	\centerline{\includegraphics[width=5in]{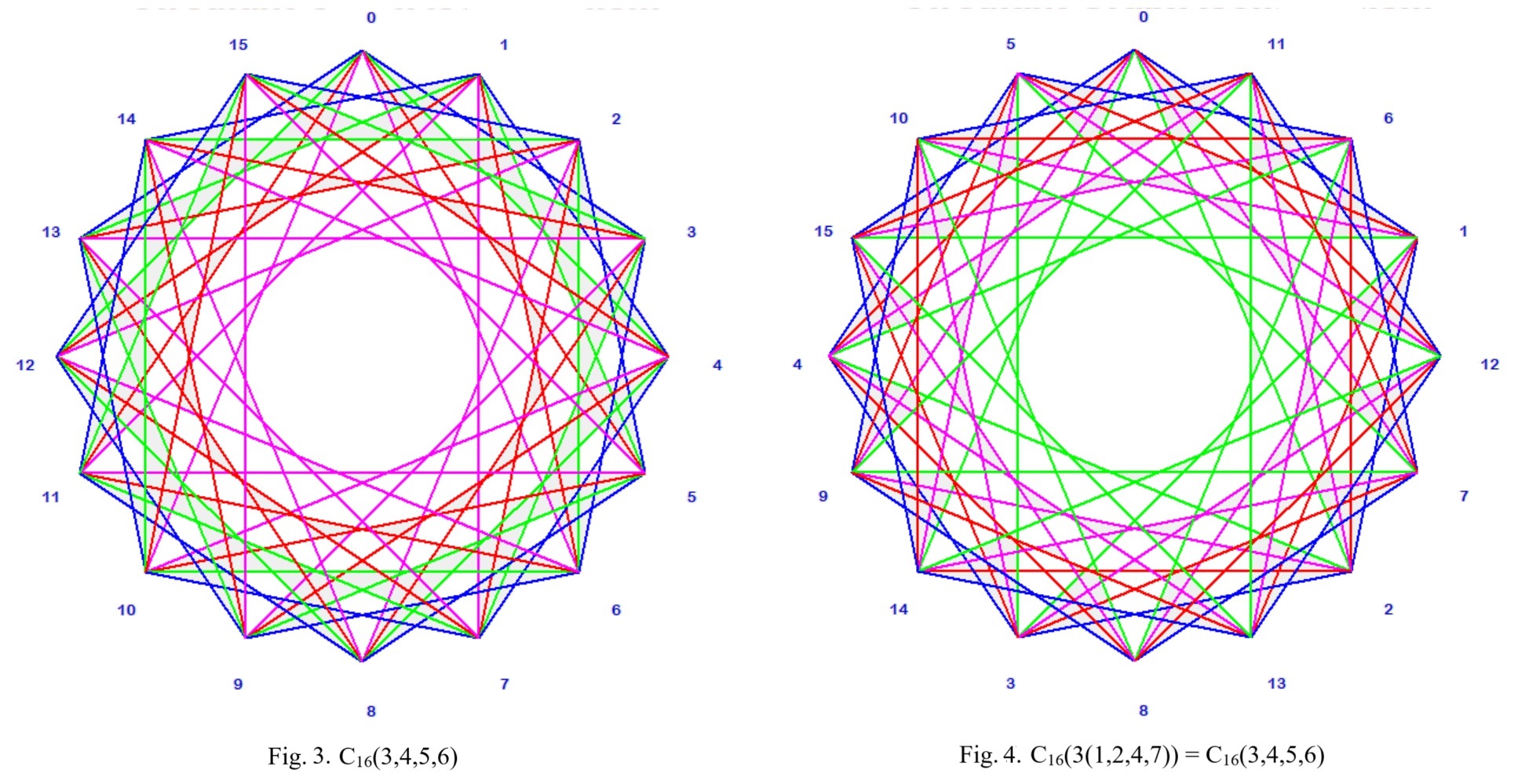}}
\end{figure}

\noindent
{\bf How Adam's Isomorphism acts on circulant graphs \cite{v96, v17}}

Here, we explain how Adam's isomorphism or Type-1 isomorphism acts on circulant graphs $C_n(R)$.

Given a circulant graph $C_n(R)$ for a set $R$ = $\{r_1, r_2, . . . , r_k\}$, corresponding to each $x\in\varphi_n$, we get its {\it Adam's isomorphic} circulant graph $C_n(xR)$. Under this Adam's isomorphism, each $v_i$ of $C_n(R)$ is taken to the position of $v_{ix}$, using subscript arithmetic modulo $n,$ $i$ = $0,1,2,\dots,n-1$. Thus, the action of Adam's isomorphism on $C_n(R)$ is that $v_0 \rightarrow v_0,$ $v_1 \rightarrow v_x$, $v_2 \rightarrow v_{2x}$, $\dots$, $v_{n-1} \rightarrow v_{(n-1)x}$ and in particular, each periodic cycle $(v_0 v_r v_{2r} \dots v_{n-r})$ of $C_n(R)$, $r \in R$, is taken to  periodic cycle ($v_0$ $v_{xr}$ $v_{2xr}$ $\dots$ $v_{(n-r)x}$) in the transformed graph $C_n(xR)$, using subscript arithmetic modulo $n$. That is, under Adam's isomorphism, each vertex (of the circulant graph $C_n(R))$, taken periodically, moves uniformly and still the length and the number of independent cycles of a particular jump size are unchanged in the transformed graph $C_n(xR)$ since $\gcd(n, xr)$ = $\gcd(n, r)$ when $\gcd(n, x)$ = 1. If $R$ = $R_1 \cup R_2$, then $C_n(xR)$ = $C_n(xR_1 \cup xR_2)$. Thus, under Adam's isomorphism all circulant subgraphs $C_n(R_1)$ and $C_n(R_2)$ of $C_n(R_1\cup R_2)$ are transformed to $C_n(xR_1)$ and $C_n(xR_2)$ which are Adam's isomorphic to $C_n(R_1)$ and $C_n(R_2)$, respectively. This is true for any number of circulant subgraphs of $C_n(R)$. That is if $q\in \mathbb{N}$ and $R$ = $R_1$ $\cup$ $R_2$ $\cup$ $\dots$ $\cup$ $R_q$, then $C_n(xR)$ = $C_n(xR_1$ $\cup$ $xR_2$ $\cup$ $\dots$ $\cup$ $xR_q)$ and under Adam's isomorphism circulant subgraphs $C_n(R_1)$, $C_n(R_2)$, $\dots$, $C_n(R_q)$ of $C_n(R_1$ $\cup$ $R_2$ $\cup$ $\dots$ $\cup$ $R_q)$ are transformed to $C_n(xR_1)$, $C_n(xR_2)$, $\dots$, $C_n(xR_q)$ which are Adam’s isomorphic to $C_n(R_1)$, $C_n(R_2)$, $\dots$, $C_n(R_q),$ respectively. 

\begin{definition} \label{d2.4} Let $Ad_n = \{\varphi_{n,x}: x\in \varphi_n\}$, $Ad_n(S) = \{\varphi_{n,x}(S): x\in \varphi_n\}$ = $\{xS: x\in \varphi_n\}$, $Ad_{n,x}(C_n(R))$ = $T1_{n,x}(C_n(R))$ = $\varphi_{n,x}(C_n(R))$ = $C_n(\varphi_{n,x}(R))$ = $C_n(xR)$, $x\in \varphi_n$ and $Ad_n(C_n(R)) = T1_n(C_n(R)) = \{\varphi_{n,y}(C_n(R)) = C_n(yR): y\in \varphi_n\}$ for sets $R,S \subseteq \mathbb{Z}_n$ where $\varphi_{n,x}(R)$ in $C_n(\varphi_{n,x}(R))$ is calculated under the reflexive modulo $n$. Define $'\circ'$ in $Ad_n(C_n(R))$ such that $\varphi_{n,x} \circ \varphi_{n,y}$ = $\varphi_{n,xy}$, $C_n(xR) \circ C_n(yR)$ = $C_n((xy)R)$ and $\varphi_{n,x}(C_n(R)) \circ \varphi_{n,y}(C_n(R))$ = $\varphi_{n,xy}(C_n(R))$, $\forall$ $x,y\in\varphi_n$.

Here, $(\varphi_{n,x} \circ \varphi_{n,y})(C_n(R))$ = $\varphi_{n,xy}(C_n(R))$ = $C_n((xy)R)$ = $C_n(xR) \circ C_n(yR)$  = $\varphi_{n,x}(C_n(R)) \circ \varphi_{n,y}(C_n(R))$, $\forall$ $x,y \in \varphi_n$, under arithmetic modulo $n$. 
\end{definition}
  	
 Clearly, $Ad_n(C_n(R))$ is the set of all circulant graphs which are Adam's isomorphic to $C_n(R)$ and we call it as the {\em Adam's set} or {\em Type-1 set} of $C_n(R)$. Also, $(Ad_n(C_n(R)), \circ )$ = $(T1_n(C_n(R)), \circ )$ is an Abelian group and we call it as the {\em Adam's group} or {\em Type-1 group} of $C_n(R)$ under $'\circ'$. 	

{\rm Moreover, $C_n(S)\in Ad_n(C_n(R))$ implies, $\exists$ $x\in\varphi_n$ $\ni$ $C_n(S)$ = $\varphi_{n,x}(C_n(R))$ = $C_n(xR)$. Corresponding to $x\in \varphi_n$, $\exists$ $x^*\in \varphi_n$ $\ni$ $xx^*$ = $1\in\varphi_n$ and $\varphi_{n,x^*}(C_n(S))$ = $\varphi_{n,x^*}(C_n(xR))$ = $C_n(x^*(xR))$ = $C_n((x^*x)R)$ = $C_n(R)$. Thus, $C_n(R)$ = $\varphi_{n,x^*}(C_n(S))$ which implies, $C_n(R)\in Ad_n(C_n(S))$, $x^*\in \varphi_n$. This also implies, $Ad_n(C_n(R))$ = $Ad_n(C_n(S))$ whenever $C_n(S)\in Ad_n(C_n(R))$ or $C_n(R)\in Ad_n(C_n(S))$. Thus, we get the following result corresponding to $Ad_n(C_n(R))$. }
 
\begin{theorem} \quad \label{t2.5} {\rm Let $Ad_n(C_n(R))$ = $\{\varphi_{n,x}(C_n(R)) = C_n(xR): x\in\varphi_n \}$. Then, $C_n(S)\in Ad_n(C_n(R))$ if and only if $Ad_n(C_n(R))$ = $Ad_n(C_n(S))$ if and only if $(Ad_n(C_n(R)), \circ )$ = $(Ad_n(C_n(S)), \circ )$ if and only if $C_n(R)\in Ad_n(C_n(S))$. \hfill $\Box$   }
\end{theorem}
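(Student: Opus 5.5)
The plan is to show that the relation ``$C_n(S)\in Ad_n(C_n(R))$'' is an equivalence relation on circulant graphs of order $n$, whose equivalence classes are exactly the Adam's sets. The four conditions in the statement are then just different descriptions of the same fact. The underlying tool is that $(\varphi_n,\circ)$ is an abelian group under multiplication modulo $n$. We also need the action identity $\varphi_{n,x}(\varphi_{n,y}(C_n(R)))=C_n(x(yR))=C_n((xy)R)=\varphi_{n,xy}(C_n(R))$. This holds because reducing under reflexive modulo $n$ commutes with multiplication by a unit: $x(n-r)\equiv -xr \pmod n$, and a jump set $R$ is really the symmetric set $R\cup(-R)$ in $\mathbb{Z}_n$.

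First I prove $C_n(S)\in Ad_n(C_n(R)) \Rightarrow Ad_n(C_n(R))=Ad_n(C_n(S))$. Write $C_n(S)=C_n(xR)$ with $x\in\varphi_n$. For any $y\in\varphi_n$ we have $C_n(yS)=C_n((yx)R)$, and $yx\in\varphi_n$, so $Ad_n(C_n(S))\subseteq Ad_n(C_n(R))$. For the reverse inclusion, let $x^*\in\varphi_n$ be the inverse of $x$, so $C_n(R)=C_n(x^*S)$, exactly as computed in the remark preceding the theorem. Then any $C_n(yR)$ equals $C_n((yx^*)S)\in Ad_n(C_n(S))$, and the two sets are equal.

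Next, equality of the sets gives equality of the groups $(Ad_n(C_n(R)),\circ)=(Ad_n(C_n(S)),\circ)$. The point needing care is that $\circ$ is defined through representatives $C_n(xR)\circ C_n(yR)=C_n((xy)R)$, so I must check that the operation induced from base $S$ agrees with the one induced from base $R$. Using $S=xR$, the $S$-based product is $C_n(uS)\circ_S C_n(vS)=C_n((uvx)R)$, whereas the $R$-based product of the same elements $C_n((ux)R)$ and $C_n((vx)R)$ is $C_n((uvx^2)R)$. These can differ. So literal equality of the group structures only holds if one reads the operation as the action of $\varphi_{n,\cdot}$, i.e. as composition $\varphi_{n,u}\circ\varphi_{n,v}$, which is independent of base. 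Alternatively, one gets it up to the obvious isomorphism given by translating by $x^*$. I would adopt the paper's convention that $\circ$ is composition of the maps $\varphi_{n,x}$ (Definition~\ref{d2.4}). Under that reading, the group is determined by the underlying set together with $Ad_n$, and the implication is immediate. I expect this well-definedness issue to be the main obstacle.

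Finally, $(Ad_n(C_n(R)),\circ)=(Ad_n(C_n(S)),\circ)$ implies in particular equality of the underlying sets. Since $C_n(R)=C_n(1\cdot R)\in Ad_n(C_n(R))=Ad_n(C_n(S))$, we get $C_n(R)\in Ad_n(C_n(S))$. By the first step with the roles of $R$ and $S$ swapped, $C_n(R)\in Ad_n(C_n(S))$ gives back $Ad_n(C_n(S))=Ad_n(C_n(R))$, and hence $C_n(S)=C_n(1\cdot S)\in Ad_n(C_n(R))$. This closes the cycle of implications.
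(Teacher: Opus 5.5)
Your argument is the paper's own, from the remark just before the theorem: invert $x$ in $(\varphi_n,\circ)$ to get $C_n(R)=C_n(x^*S)$, then use closure of $\varphi_n$ under multiplication to identify the two Adam's sets; you additionally write out both inclusions and the compatibility of reflexive reduction with multiplication by a unit, where the paper only says ``this also implies''. Your worry about $\circ$ is justified and the paper does not address it, since it silently treats equality of the sets as equality of the groups: under the graph-level rule $C_n(xR)\circ C_n(yR)=C_n((xy)R)$ the two identity elements are $C_n(R)$ and $C_n(S)$ (e.g.\ $C_{16}(1,2,7)$ and $C_{16}(3,5,6)$ on the same set $Ad_{16}(C_{16}(1,2,7))$), so the third condition holds only in your reading, via the action of $Ad_n$ or up to the isomorphism induced by multiplication by $x$.
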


Circulant graphs $C_{54}(1,17,18,19)$, $C_{54}(5,13,18,23)$ and $C_{54}(7,11,18,25)$ are Adam's isomorphic since $C_{54}(5(1,17,18,19))$ = $C_{54}(5,13,18,23)$ and  $C_{54}(7(1,17,18,19))$ = $C_{54}(7,11,18,25)$. $Ad_{54}(C_{54}(1$, $17,18,19))$ = $\{\varphi_{54,x}(C_{54}(1,17,18,19)): x\in \varphi_{54}\}$ = $\{\varphi_{54,x}(C_{54}(1,17,18,19)): x = 1,5,7,11,13,17,19$, $23,25,29,31,35,37,41,43,47,49,53\}$ = $\{C_{54}(1,17, 18,19),$ $C_{54}(5,13,18,23),$ $C_{54}(7,11,18,25)\}$ = $Ad_{54}($ $C_{54}(5,13,18,23))$ = $Ad_{54}(C_{54}(7,11,18,25))$. 

\vspace{.1cm}
\noindent
{\bf Notations.}  We introduce the following notations under Adam's isomorphism of circulant graphs.

$C_n(R) \cong_{Ad_{n,x}} C_{n}(S)$ or $C_n(R) \cong_{T1_{n,x}} C_{n}(S)$ when $Ad_{n,x}(C_n(R))$ (= $T1_{n,x}(C_n(R))$ = $C_{n}(xR)$) = $C_n(S)$, $x\in\varphi_n$ and $C_n(R) \cong_{Ad_{n}} C_{n}(S)$ or $C_n(R) \cong_{T1_{n}} C_{n}(S)$ when $C_n(R)$ and $C_n(S)$ are Adam's isomorphic or Type-1 isomorphic. 

\section{Type-2 isomorphism and Type-2 isomorphic circulant graphs}

In 1970, Elspas and Turner \cite{eltu} rised a question on the type of isomorphism that exists on the two circulant graphs $C_{16}(1, 2, 7)$ and $C_{16}(2, 3, 5)$, see Figures 5 and 6, which are  isomorphic but not of Adam’s. In 1996, Vilfred \cite{v96} defined a new type of circulant graph isomorphism, different from Adam’s isomorphism, and studied under the heading ‘generalized circulant graph isomorphism’ and thereafter, we call the new type of isomorphism of circulant graphs as {\em Type-2 isomorphism} \cite{v17}-\cite{v24} and Adam’s isomorphism as {\em Type-1 isomorphism}. 
\begin{figure}[ht]
	\centerline{\includegraphics[width=4in]{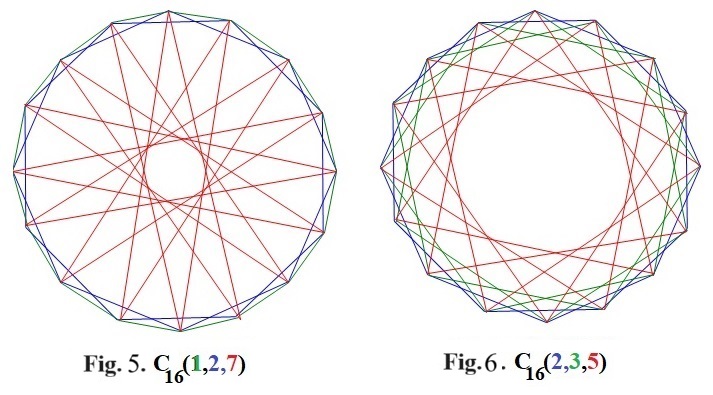}}
\end{figure}

While studying Type-2 isomorphism of $C_n(R)$ w.r.t. $m$ in \cite{v96}-\cite{v20}, the author considered $m$ = $\gcd(n, r) > 1$ and $r\in R$ and  proved that $C_{16}(1,3,7)$ and $C_{16}(2,3,5)$ are Type-2 isomorphic w.r.t. $m$ = 2. Vilfred and Wilson \cite{vw1} - \cite{vw3} obtained families of isomorphic circulant graphs of Type-2 w.r.t. $m$ = 3,5,7. In \cite{v25}, Vilfred modified the definition of Type-2 isomorphism of $C_n(R)$ w.r.t. $m$ by considering $m > 1$ is a divisor of $\gcd(n, r)$ and $r\in R$. In this paper, the definition is further modified to $m > 1$ and $m^3$ are divisors of $\gcd(n, r)$ and $n$, respectively, and $r\in R$. This modification simplifies our calculations while finding isomorphic circulant graphs of Type-2. In this section, we present our study on Type-2 isomorphism and Type-2 isomorphic circulant graphs, explain in details on how Type-2 isomorphism takes place and also present proof of $C_{16}(1, 2, 7)$ and $C_{16}(2, 3, 5)$ are Type-2 isomorphic w.r.t. $m$ = 2 under the modified definition. This study is an extension and improved results obtained in \cite{v96, v24, v20}. We start the section with a slightly modified lemma but with the same proof as given in \cite{v20} that is essential to define Type-2 isomorphism.

\begin{lemma}{\rm \quad \label{l3.1}  Let $m >1$ be a divisor of $n$. Then, for each value of $t$, the mapping $\theta_{n,m,t}: \mathbb{Z}_n \rightarrow \mathbb{Z}_n$ defined by $\theta_{n,m,t}(x)$ = $x+jtm$, $x \in \mathbb{Z}_n$, is bijective under arithmetic modulo $n$ where $x = qm+j,$ $0 \leq j \leq m-1,$ $0 \leq q,t \leq \frac{n}{m}-1$ and $j,m,q,t \in \mathbb{Z}_n$. In particular, the result is true when $m > 1$ is a divisor of $\gcd(n, r)$ and $r \in \mathbb{Z}_n$. } 
\end{lemma}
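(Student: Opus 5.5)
We will show that $\theta_{n,m,t}$ is one-to-one on $\mathbb{Z}_n$. Since $\mathbb{Z}_n$ is finite and $\theta_{n,m,t}$ maps $\mathbb{Z}_n$ into itself, injectivity already gives surjectivity, and hence bijectivity.

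\textbf{Step 1: partition by residue classes.} Since $m \mid n$, the residue of $x$ modulo $m$ is well defined on $\mathbb{Z}_n$. So each $x\in\mathbb{Z}_n$ has a unique representation $x=qm+j$ with $0\le j\le m-1$ and $0\le q\le \frac{n}{m}-1$. We partition $\mathbb{Z}_n$ into the $m$ classes
\[
A_j=\{qm+j : 0\le q\le \tfrac{n}{m}-1\}, \qquad j=0,1,\dots,m-1 .
\]
Each class is non-empty, and $|A_j|=\frac{n}{m}$.

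\textbf{Step 2: $\theta_{n,m,t}$ preserves each class and is a translation on it.} Because $m\mid n$, the element $\theta_{n,m,t}(x)=x+jtm$ reduced modulo $n$ is still congruent to $j$ modulo $m$. Hence $\theta_{n,m,t}(A_j)\subseteq A_j$. On $A_j$ the map is translation by the fixed element $jtm$ of $\mathbb{Z}_n$. Translation is injective: if $x+jtm\equiv y+jtm \pmod n$, then $x\equiv y \pmod n$.

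\textbf{Step 3: conclude.} Each restriction $\theta_{n,m,t}/_{A_j}$ is one-to-one. By Lemma \ref{l1.6}, applied to the partition $A_0,\dots,A_{m-1}$, the map $\theta_{n,m,t}$ is one-to-one on $\mathbb{Z}_n$, and therefore bijective by finiteness.

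\textbf{The particular case.} If $m>1$ divides $\gcd(n,r)$, then in particular $m\mid n$, so the result follows from the general case.

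The only point needing care is Step 2: the residue $j$ of $x\pmod m$ must be unchanged after reduction modulo $n$. This holds exactly because $m \mid n$, and it is the essential use of that hypothesis.
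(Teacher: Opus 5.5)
Your argument is essentially the paper's. The paper establishes properties (i)--(iv), and their content is exactly your Steps 1--2. Because $m\mid n$, we have $\theta_{n,m,t}(x)\equiv x \pmod m$, so elements with different residues $j$ cannot have the same image. On a fixed residue class the map is the translation $x\mapsto x+jtm$, so $\theta_{n,m,t}(km+i)=\theta_{n,m,t}(qm+i)$ forces $k=q$. Property (iv) combines the two, and bijectivity is then read off by finiteness. Your Step 2 makes explicit the mod-$m$ invariance that the paper uses only tacitly when it passes from (ii) and (iii) to (iv).

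The one thing to repair is the justification in Step 3. Lemma~\ref{l1.6}, as stated, is false in the ``if'' direction. Injectivity of every restriction $f/_{A_i}$ does not imply injectivity of $f$, because elements of different blocks may still share an image. A constant map on a two-element set, partitioned into singletons, is a counterexample. So citing that lemma does not finish the proof.

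What does finish it is a fact you already proved: $\theta_{n,m,t}(A_j)\subseteq A_j$ for every $j$, and the classes $A_0,\dots,A_{m-1}$ are pairwise disjoint. So for $x\in A_i$ and $y\in A_j$ with $i\neq j$, the images lie in the disjoint sets $A_i$ and $A_j$ and hence differ. Combined with injectivity on each $A_j$, this gives injectivity on all of $\mathbb{Z}_n$; this is precisely the paper's property (iv). With that sentence in place of the appeal to Lemma~\ref{l1.6}, your proof is complete. It also works for every integer $t$, not just $0\le t\le \frac{n}{m}-1$.
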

\begin{proof}\quad From the definition of $\theta_{n,m,t}$, we get the following properties. 
	\begin{enumerate}
		\item[{\rm(i)}] $\theta_{n,m,t}(km) = km$ for every $k \in \mathbb{Z}_n$, $km \in \mathbb{Z}_n$ and $0 \leq t \leq \frac{n}{m}-1$.
		\item[{\rm(ii)}] For $0 \leq i,j \leq m-1$ and $0 \leq t \leq \frac{n}{m}-1$, $\theta_{n,m,t}(i)$ = $\theta_{n,m,t}(j)$ if and only if $i$ = $j$ if and only if $\theta_{n,m,t}(qm+i)$ = $\theta_{n,m,t}(qm+j),$ $\gcd(n,r)$ = $m > 1$, and $0 \leq qm \leq n-1$.
		\item[{\rm(iii)}]	For $0 \leq i \leq m-1$, $0 \leq km,qm \leq n-1$ and $0 \leq q,t \leq \frac{n}{m}-1$,  $\theta_{n,m,t}(km+i)$ = $\theta_{n,m,t}(qm+i)$ if and only if $k$ = $q.$ 
		
		From the above three properties, we get,
		
		\item[{\rm(iv)}]	For $0 \leq i,j \leq m-1$, $0 \leq km,qm \leq n-1$ and $0 \leq t \leq \frac{n}{m}-1$, $\theta_{n,m,t}(i+km)$ = $\theta_{n,m,t}(j+qm)$ if and only if $i$ = $j$ and $k$ = $q$.  
	\end{enumerate}
	This implies that for each value of $t$, the mapping $\theta_{n,m,t}$ is bijective when $m > 1$ is a divisor of $n$ and $0 \leq t \leq \frac{n}{m}-1$.  
	
	In particular, when $m > 1$ is a divisor of $\gcd(n, r)$ and $r \in \mathbb{Z}_n$ implies, $m > 1$ is a divisor of $n$ and hence the result is also true in this case. Hence, we get the result.   
\end{proof}

\begin{rem}\quad \label{r3.2} Related to Lemma \ref{l3.1}, when $n > r > m > 1$ and $m$ is a divisor of $\gcd(n, r)$, the mapping $\theta_{n,r,t}: \mathbb{Z}_n \rightarrow \mathbb{Z}_n$ defined by $\theta_{n,r,t}(x)$ = $x+jrt$ need not be bijective for every $t$ where  $x$ = $qr+j$, $0 \leq j \leq r-1$, $0 \leq q,t \leq \frac{n}{r}$, $0 \leq qrt \leq n-1$ and $j,m,q,t\in\mathbb{Z}_n$. Refer \cite{v24} for further details.
\end{rem}

\begin{theorem} \quad  \label{t3.3} {\rm Let $V(C_n(R))$ = $\{v_0,v_1,v_2,...,v_{n-1}\}$, $V(K_n)$ = $\{u_0,u_1,u_2,...,u_{n-1}\}$ and $r\in R$ such that $m > 1$ be a divisor of $\gcd(n, r)$ and $|R| \geq 3$. Then the
mapping $\theta_{n,m,t} :$ $V(C_n(R)) \rightarrow V(K_n)$ defined by $\theta_{n,m,t}(v_x)$ = $u_{x+jtm}$, $\theta_{n,m,t}((v_x, v_{x+s}))$ = $(\theta_{n,m,t}(v_x), \theta_{n,m,t}(v_{x+s}))$ under subscript arithmetic modulo $n$ and $\theta_{n,m,t}(C_n(R))$ = $C_n(\theta_{n,m,t}(R))$ for every $x \in \mathbb{Z}_n$ and $s\in R$ for a set $R$ = $\{r_1, r_2, . . . , r_k\}$ is one-to-one, preserves adjacency and $\theta_{n,m,t}(C_n(R))$ $\cong$ $C_n(R)$ for $t$ = $0, 1, 2, . . . , \frac{n}{m}-1$ where $x$ = $qm + j$, $0 \leq j \leq m - 1$, $0 \leq q,t \leq \frac{n}{m}-1$, $j,q,t\in\mathbb{N}_0$ and $\theta_{n,m,t}(R)$ in $C_n(\theta_{n,m,t}(R))$ is calculated under reflexive modulo $n$. } 
\end{theorem}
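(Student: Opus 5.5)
The proof is essentially a transport-of-structure argument, and I plan to reduce everything to Lemma \ref{l3.1}. First I identify $V(C_n(R))$ and $V(K_n)$ with $\mathbb{Z}_n$ through the subscripts, so that $\theta_{n,m,t}(v_x)=u_{x+jtm}$ is just the map $x\mapsto \theta_{n,m,t}(x)$ of Lemma \ref{l3.1}. Since $m>1$ divides $\gcd(n,r)$, $m$ divides $n$, and Lemma \ref{l3.1} gives that this map is a bijection of $\mathbb{Z}_n$ for every $t$ with $0\le t\le \frac{n}{m}-1$. Hence $\theta_{n,m,t}$ is one-to-one (indeed onto) from $V(C_n(R))$ to $V(K_n)$. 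If one wants the injectivity argument in the paper's own style, I would partition $\mathbb{Z}_n$ into the residue classes $\{qm+j : 0\le q\le \frac{n}{m}-1\}$, $j=0,\dots,m-1$. On each class $\theta_{n,m,t}$ is the translation by $jtm$, which is injective, and the classes are mapped into themselves. Lemma \ref{l1.6} then gives injectivity, using property (iv) in the proof of Lemma \ref{l3.1}.

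Next I treat edges. Every edge of $C_n(R)$ has the form $(v_x,v_{x+s})$ with $s\in R$, and by definition its image is $(\theta_{n,m,t}(v_x),\theta_{n,m,t}(v_{x+s}))$. These are two distinct vertices of $K_n$, because $\theta_{n,m,t}$ is injective on vertices and $v_x\neq v_{x+s}$; so the image is a genuine edge of $K_n$. Thus adjacency is preserved. Distinct edges go to distinct edges, since an unordered pair of vertices is determined by its endpoints and $\theta_{n,m,t}$ is injective. Therefore the image graph $\theta_{n,m,t}(C_n(R))$, whose vertex set is $V(K_n)$ and whose edge set is the set of image edges, has $\theta_{n,m,t}$ as a bijection on vertices. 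This bijection maps $E(C_n(R))$ bijectively onto its edge set, and its inverse also preserves adjacency by construction. Hence $\theta_{n,m,t}$ is a graph isomorphism and $\theta_{n,m,t}(C_n(R))\cong C_n(R)$ for every $t$.

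The only point needing care, and the one I expect to be the main obstacle in the writing, is the notational identification $\theta_{n,m,t}(C_n(R))=C_n(\theta_{n,m,t}(R))$ with $\theta_{n,m,t}(R)$ computed under reflexive modulo $n$. I will not try to prove that the image graph is always circulant, since in general it need not be; that is precisely what Definition \ref{d3.4} later imposes as a condition. Instead I read this equality as the naming convention fixed in the statement and in Table 1 (item 16). The isomorphism claim is then about the image graph itself, and it follows from the two steps above. Finally, the case $t=0$ is the identity and is trivially included, and the hypothesis $|R|\ge 3$ plays no role in the argument beyond fixing the setting.
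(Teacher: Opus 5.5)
Your proposal is correct and follows the paper's own proof: injectivity of $\theta_{n,m,t}$ comes from Lemma \ref{l3.1}, and adjacency preservation, hence $\theta_{n,m,t}(C_n(R))\cong C_n(R)$, comes directly from the defining rule $\theta_{n,m,t}((v_x,v_{x+s}))=(\theta_{n,m,t}(v_x),\theta_{n,m,t}(v_{x+s}))$. Your additional points are accurate refinements of the paper's two-line argument: distinct edges go to distinct edges; each residue class mod $m$ is mapped into itself, which is what actually makes the partition argument via Lemma \ref{l1.6} work; and $C_n(\theta_{n,m,t}(R))$ is only a name for the image graph, which need not be circulant (cf.\ $t=1,3$ in Problem \ref{p3.13}).
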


\begin{proof} \quad Using Lemma \ref{l3.1}, the mapping $\theta_{n,m,t}$ is one-to-one. By the given condition $\theta_{n,m,t}((v_x, v_{x+s}))$ = $(\theta_{n,m,t}(v_x), \theta_{n,m,t}(v_{x+s}))$ for every $x \in \mathbb{Z}_n$ and $s\in R$, the mapping preserves adjacency, under subscript arithmetic modulo $n$ and hence $\theta_{n,m,t}(C_n(R))$ $\cong$ $C_n(R)$ for $t$ = $0, 1, 2, . . . , \frac{n}{m}-1$. Hence the result.
\end{proof}

Based on Lemma \ref{l3.1} and Theorem \ref{t3.3}, we modify the definition of Type-2 isomorphism of circulant graphs as follows.

\begin{definition} \label{d3.4} Let $V(K_n) = \{u_0,u_1,u_2,...,u_{n-1}\}$, $V(C_n(R)) = \{v_0,v_1,v_2,...,$ $v_{n-1}\}$, $r\in R$, $|R| \geq 3$, $m > 1$ and $m$ and $m^3$ be divisors of $\gcd(n, r)$ and $n$, respectively.  Define one-to-one mapping $\theta_{n,m,t} :$ $V(C_n(R)) \rightarrow V(K_n)$ such that $\theta_{n,m,t}(v_x)$ = $u_{x+jtm}$,  $\theta_{n,m,t}((v_x, v_{x+s}))$ = $(\theta_{n,m,t}(v_x), \theta_{n,m,t}(v_{x+s}))$ under subscript arithmetic modulo $n$ and $\theta_{n,m,t}(C_n(R))$ = $C_n(\theta_{n,m,t}(R))$ for every $x$ = $qm+j \in \mathbb{Z}_n$, $s\in R$, $0 \leq j \leq m-1$, $0 \leq q,t \leq \frac{n}{m} -1$ and $\theta_{n,m,t}(R)$ in $C_n(\theta_{n,m,t}(R))$ is calculated under the reflexive modulo $n$. And for a particular value of $t,$ if  $\theta_{n,m,t}(C_n(R))$ = $C_n(S)$ for some $S$  and  $S \neq yR$ for all $y\in \varphi_n$ under reflexive modulo $n$, then $C_n(R)$ and $C_n(S)$ are called {\em isomorphic circulant graphs of Type-2 w.r.t. $m$} and the isomorphism as {\em Type-2 isomorphism w.r.t. $m$.} 
	
When $C_n(R)$ and $C_n(S)$ are Type-2 isomorphic w.r.t. $m$, then we also say that $C_{kn}(kR)$  and $C_{kn}(kS)$ are Type-2 isomorphic w.r.t. $m$, $k\in\mathbb{N}$. Here, $k.C_n(T)$ = $C_{kn}(kT)$, $k\in\mathbb{N}$.
\end{definition}

\begin{definition}\quad \label{d3.5}  Let $V_{n,m}(C_n(R))$ = $\{\theta_{n,m,t}(C_n(R)): t = 0,1,\dots,\frac{n}{m}-1 \}$ and  $T2_{n,m}(C_n(R))$ = $\{C_n(R)\}$ $\cup$ $\{C_n(S):$ $C_n(S)$ is Type-2 isomorphic to $C_n(R)$ w.r.t. $m$, $m > 1$ and  $m^3$ be divisors of $\gcd(n, r)$ and $n$, respectively, and $r\in R\}$. We call $T2_{n,m}(C_n(R))$ as {\em the Type-2 set of $C_n(R)$ w.r.t. $m$}.

That is, {\em the Type-2 set of $C_n(R)$ w.r.t. $m$}, denoted by $T2_{n,m}(C_n(R))$, is $\{C_n(R)\}$ $\cup$ $\{\theta_{n,m,t}(C_n(R)):$ $\theta_{n,m,t}(C_n(R))$ = $C_n(S)$ and $C_n(S)$ is Type-2 isomorphic to $C_n(R)$ w.r.t. $m$, $m > 1$ and $m^3$ are divisors of $\gcd(n, r)$ and $n$, respectively, $r\in R$ and $1 \leq t \leq \frac{n}{m}-1\}$ = $\{\theta_{n,m,0}(C_n(R))\}$ $\cup$ $\{\theta_{n,m,t}(C_n(R)):$ $\theta_{n,m,t}(C_n(R))$ = $C_n(S)$ and $C_n(S)$ is Type-2 isomorphic to $C_n(R)$ w.r.t. $m$, $m > 1$ and $m$ and $m^3$ are divisors of $\gcd(n, r)$ and $n$, respectively, $r\in R$ and $1 \leq t \leq \frac{n}{m}-1\}$.
\end{definition}

Clearly, $T2_{n,m}(C_n(R)) \subseteq V_{n,m}(C_n(R))$ and $T1_n(C_n(R))$ $\cap$ $T2_{n,m}(C_n(R))$ = $\{C_n(R)\}$ where $T1_n(C_n(R))$ = $Ad_n(C_n(R))$ and $V_{n,m}(C_n(R))$ = $\{\theta_{n,m,t}(C_n(R)):$ $t$ = $0,1,\dots,\frac{n}{m}-1\}$. Here, subsets $R$ and $S$ of $\mathbb{Z}_n$ are called {\em Type-2 isomorphic subsets of $\mathbb{Z}_n$ w.r.t. $m$.} That is subsets $R$ and $S$ of $\mathbb{Z}_n$ are called {\em Type-2 isomorphic subsets of $\mathbb{Z}_n$} w.r.t. $m$ if and only if $C_n(R)$ and $C_n(S)$ are Type-2 isomorphic w.r.t. $m$.

Subsets $S$ and $R$ of $\mathbb{Z}_n$ are called {\em (Type-1} or {\em Adam's) isomorphic} subsets of $\mathbb{Z}_n$ if and only if circulant graphs $C_n(S)$ and $C_n(R)$ are (Type-1 or Adam's) isomorphic.   

A circulant graph $C_n(R)$ is said to have {\it Cayley Isomorphism} (CI) property if whenever $C_n(S)$ is isomorphic to $C_n(R)$, they are Adam's isomorphic. CI-problem determines which graphs (or which groups) have the $CI$-property. Muzychuk \cite{mu04} completed the complete classification of cyclic $CI$-groups but investigation of circulant graphs without $CI$-property is not much done. 

Clearly, when either $n$ is a prime number or $\gcd(n, x)$ = 1, $\forall$ $x\in R$, then circulant graph $C_n(R)$ has CI-property. 

\vspace{.2cm}
\noindent
{\bf How Type-2 isomorphism acts on circulant graphs \cite{v17}}

It is interesting to know how Type-2 isomorphism is defined, how it acts on circulant graphs and under what conditions Type-2 isomorphic circulant graph(s) are obtained. These are presented in the following paragraphs. 

For convenience, let $\gcd(n, r_i)$ = $m_i > 1$ for at least one $i$, instead of considering $\gcd(n, r_i)$ be a multiple of $m_i > 1$, $r_i\in R$ and $1 \leq i \leq k$ so that $C_n(R)$ for a set $R$ = $\{ r_1, r_2, \dots, r_k \}$ can be decomposed into $m_i$ number of disjoint isomorphic connected circulant subgraphs, say, $\Gamma_0, \Gamma_1, \dots, \Gamma_{m_i-1}$, each having one periodic cycle of period $r_i$ and of length $n_i$ = $\frac{n}{m_i}$, using Theorem \ref{a1} where $v_j \in V(\Gamma_j)$ (and thereby $v_{j+qm_i} \in V(\Gamma_j)$, using subscript modulo $n$, $0 \leq q \leq \frac{n}{m_i}-1)$, $0 \leq j \leq m_i-1$. For some value of $t$, the transformed graph $\theta_{n,m_i,t}(C_n(R))$ may be in the form of $C_n(S)$ for some $S$, $0 \leq t \leq \frac{n}{m_i}-1$. Suppose $\theta_{n,m_i,t}(C_n(R))$ = $C_n(S)$ for some $S$ = $\{s_1, s_2, \dots, s_k\}$, then by the necessary condition for isomorphism between $C_n(R)$ and $C_n(S)$, for each $j$ there exists $q$ such that $\gcd(n, r_j)$ = $\gcd(n, s_q)$, $1 \leq r_j,s_q \leq \frac{n}{2}$ and $1 \leq j,q \leq k$, using Theorem \ref{a3}. This implies, there exists $b_q$ such that $s_q$ = $b_qm_j$ and $\gcd(\frac{n}{m_j}, b_q)$ = 1 where $m_j$ = $\gcd(n, r_j)$, $1 \leq j,q \leq k.$ Under the transformation of $\theta_{n,m_i,t}$, each $\Gamma_j$ simply rotates $tm_i$ positions (points) w.r.t. $\Gamma_{j-1}$ in the regular $n$-gon in the clockwise direction and thereby the vertices of each $V_j$ = $V(\Gamma_j)$ are taking the positions of the vertices of $V_j$ only, $0 \leq t \leq \frac{n}{m_i}-1$ and $0 \leq j \leq m_i-1$. By this transformation $\theta_{n,m_i,t}$ acting on $C_n(R)$, we may get a circulant graph of the form $C_n(S)$ for some $t$ and $S$ where $0 \leq t \leq \frac{n}{m_i}-1$ and $S \subseteq [1,\frac{n}{2}]$.  

While considering Type-2 isomorphism of $C_n(R),$ under the transformation $\theta_{n,r_i,t},$ sets of elements $V(\Gamma_j)$ move uniformly instead of individual elements (points) under Adam's isomorphism. Also, the sets of elements $\Gamma_j$ are isomorphic circulant subgraphs of the given circulant graph and the {\it jump sizes of these isomorphic circulant subgraphs do not change and all others move and may change} but still have to satisfy the necessary condition for circulant graph isomorphism (Theorem \ref{t1.7}) when the transformed graph is a circulant graph in its representation. The movements are so arranged that the resultant graph may be circulant in its representation.

\begin{theorem}{\rm \cite{vc13}\quad \label{t3.6} Let  $\Gamma_{m_i}$ = $\Gamma_0,$ $\Gamma_1,$ $\dots,$ $\Gamma_{m_i-1}$ be as defined above. Then, in $C_n(R),$ $v_x \in V(\Gamma_j)$ if and only if $v_{n-1-x} \in V(\Gamma_{m_i-1-j}),$ $0 \leq x \leq n-1$ and $0 \leq j \leq m_i-1.$}
\end{theorem}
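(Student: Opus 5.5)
The plan is to reduce the statement to a congruence modulo $m_i$. This needs an explicit description of the vertex sets $V(\Gamma_j)$, which comes from the decomposition described just before the statement.

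First I would record that, since $m_i=\gcd(n,r_i)$ divides $n$, Theorem \ref{t1.2} shows the periodic cycles of period $r_i$ are exactly the $m_i$ cycles through $v_j, v_{j+r_i}, v_{j+2r_i},\dots$ for $0\le j\le m_i-1$. As indexed in the discussion preceding the statement, $\Gamma_j$ is the piece containing $v_j$, and its vertices are $v_{j+qm_i}$ for $0\le q\le \frac{n}{m_i}-1$. Hence
\[
v_x\in V(\Gamma_j)\iff x\equiv j \pmod{m_i},\qquad 0\le j\le m_i-1 .
\]
To justify this I would use that the subscripts $j+kr_i$ reduced modulo $n$ run over the full residue class $j+m_i\mathbb{Z}_n$, because $\gcd(n,r_i)=m_i$. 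These classes for $j=0,\dots,m_i-1$ partition $\mathbb{Z}_n$, so each vertex lies in exactly one $\Gamma_j$. This identification is the only step requiring care, and I expect it to be the main (mild) obstacle. The statement implicitly relies on the labelling ``$v_j\in V(\Gamma_j)$'' fixed in the preceding paragraph, and any ambiguity (for example a component of $C_n(R)$ being larger than one periodic cycle) must be resolved in favour of that convention.

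With this description the theorem is a one-line congruence computation. Suppose $v_x\in V(\Gamma_j)$, so $x=qm_i+j$. Then, reducing modulo $n$,
\[
n-1-x = \Bigl(\frac{n}{m_i}-q-1\Bigr)m_i + (m_i-1-j),
\]
where $0\le m_i-1-j\le m_i-1$ and $0\le \frac{n}{m_i}-q-1\le \frac{n}{m_i}-1$. Therefore $n-1-x\equiv m_i-1-j \pmod{m_i}$, and by the description above $v_{n-1-x}\in V(\Gamma_{m_i-1-j})$.

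For the converse I would note that both maps $x\mapsto n-1-x$ on $\mathbb{Z}_n$ and $j\mapsto m_i-1-j$ on $\{0,\dots,m_i-1\}$ are involutions. Applying the forward direction to $v_{n-1-x}\in V(\Gamma_{m_i-1-j})$ then gives $v_x\in V(\Gamma_j)$, which completes the equivalence for all $0\le x\le n-1$ and $0\le j\le m_i-1$.
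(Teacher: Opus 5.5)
Your proposal is correct and takes essentially the same route as the paper: identify $V(\Gamma_j)$ with the vertices $v_{qm_i+j}$, then use the identity $n-1-(qm_i+j)=(n_i-1-q)m_i+(m_i-1-j)$ with $n=n_im_i$. The paper writes this as a single chain of equivalences, whereas you prove one direction and get the converse because both index maps are involutions, which is only a cosmetic difference.
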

\begin{proof}\quad For $0 \leq x \leq n-1$,  $v_x \in V(\Gamma_j)$  if and only if  $x$ = $qm_i+j$,  $0 \leq j \leq m_i-1$ and  $0 \leq q \leq n_i-1$ if and only if $n-1-x$ = $n-1-(qm_i+j)$ = $m_i-1-j+(n_i-1-q)m_i$ where $n$ = $n_im_i$, $0 \leq n_i-1-q \leq n_i-1$ and $0 \leq m_i-1-j \leq m_i-1$ if and only if $v_{n-1-x} \in V(\Gamma_{m_i-1-j})$, $n-x \equiv m_i-j~(mod~m_i).$  
\end{proof}

\begin{definition}{\rm \cite{vc13}}\quad \label{d3.7} The {\it symmetric equidistance condition} with respect to $v_i$ in $C_n(R)$ for a set $R = \{r_1,r_2,\dots,r_k\}$ is that $v_{i+j}$ is adjacent to $v_i$ if and only if $v_{n-j+i}$ is adjacent to $v_i$, using subscript arithmetic modulo $n,$ $0 \leq i,j \leq n-1$. 
\end{definition} 

The following theorem is a modified one of Theorem 5.2 in \cite{v24} that help to identify the graph $\theta_{n,m,t}(C_n(R))$ is again equal to $C_n(S)$ for some $S$ or not for a given circulant graph $C_n(R)$ and $t$ where $m > 1$, $m$ divides $\gcd(n,r)$, $m^3$ divides $n$, $r\in R$ and $0 \leq t \leq \frac{n}{\gcd(n,r)}$. 

\begin{theorem}{\rm \quad \label{t3.8} Let $R$ = $\{r_1,r_2,\dots,r_k\}  \subseteq [1, \frac{n}{2}]$, $m_i > 1$ be a divisor of $\gcd(n,r_i)$ for at least one $i$,  $1 \leq i \leq k$. Then, $\theta_{n,m_i,t}(C_n(R))$ = $C_n(S)$ if and only if $\theta_{n,m_i,t}(R \cup (n-R))$ = $S \cup (n-S)$ for some $t$ and $S$ $\ni$ $0 \leq t \leq \frac{n}{m_i}-1$ and $S \subseteq [1, \frac{n}{2}]$ if and only if $\theta_{n,m_i,t}(C_n(R))$ satisfies the symmetric equidistant condition w.r.t. $v_0$.}
\end{theorem}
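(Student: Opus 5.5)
The plan is to prove the two equivalences of Theorem \ref{t3.8} in turn. We use that $\theta_{n,m_i,t}$ is a bijection of $\mathbb{Z}_n$ (Lemma \ref{l3.1}) and that $\theta_{n,m_i,t}(C_n(R))$ is, by Definition \ref{d3.4}, the graph on $\{u_0,\dots,u_{n-1}\}$ with edges $u_{\theta(x)}u_{\theta(x+s)}$ for $x\in\mathbb{Z}_n$ and $s\in R$. Throughout write $\theta=\theta_{n,m_i,t}$ and $R^{\pm}=R\cup(n-R)$.

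We first record two structural facts. Fact (a): $\theta(qm_i+j)=qm_i+j+jtm_i$ fixes every multiple of $m_i$, in particular $\theta(0)=0$. Hence the neighbourhood of $u_0$ in $\theta(C_n(R))$ is exactly $\theta(R^{\pm})$, because the neighbours of $v_0$ in $C_n(R)$ are $v_r$ with $r\in R^{\pm}$. Fact (b): $\theta$ is additive modulo translations by multiples of $m_i$. Concretely, $\theta(x+km_i)=\theta(x)+km_i$, since adding $km_i$ does not change the residue $j$. Consequently the map $\tau_{m_i}:u_y\mapsto u_{y+m_i}$ is an automorphism of $\theta(C_n(R))$. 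A further point holds because $r_i\equiv 0 \pmod{m_i}$ and, more generally, because of the residue structure: the difference $\theta(x+s)-\theta(x)=s+(j'-j)tm_i$ depends only on $s$ and on $j=x \bmod m_i$, where $j'=(x+s)\bmod m_i$.

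For the first equivalence, $(\Rightarrow)$ is immediate. If $\theta(C_n(R))=C_n(S)$, then the neighbourhood of $u_0$ is $S^{\pm}$, and by (a) this equals $\theta(R^{\pm})$. For $(\Leftarrow)$, suppose $\theta(R^{\pm})=S^{\pm}$. We must show that every vertex $u_y$ has neighbourhood $y+S^{\pm}$, i.e.\ that the graph is circulant, not merely correct at $u_0$. By (b) it suffices to check the vertices $u_y$ with $0\le y\le m_i-1$. Each $u_y$ is $\theta(v_j)$ for a unique $j$, and its neighbourhood is $\{\theta(j+r): r\in R^{\pm}\}$. I would use the decomposition into the subgraphs $\Gamma_0,\dots,\Gamma_{m_i-1}$ together with the pairing of Theorem \ref{t3.6} ($v_x\in\Gamma_j \iff v_{n-1-x}\in\Gamma_{m_i-1-j}$). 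Combined with the uniform rotation of each $\Gamma_j$ by $jtm_i$, this should transfer the neighbourhood structure at $v_0$ to each residue class. This transfer is the main obstacle. The rotation amount $jtm_i$ varies with $j$, so the difference set at $u_{\theta(j)}$ is $\{r+((j+r)\bmod m_i - j)tm_i\}$, which is not obviously a translate of the one at $u_0$. I would first try to show that this set equals $\theta(R^{\pm})$ by exploiting $m_i^3\mid n$ and the closure of $R^{\pm}$ under negation. If that fails in general, I would fall back on reading ``$\theta(C_n(R))=C_n(S)$'' as in the paper's convention, namely that the image graph is the circulant determined by the image jump set; under that convention the equivalence reduces to (a).

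For the second equivalence, we apply Definition \ref{d3.7} at $v_0$. The symmetric equidistance condition says that $u_j$ is adjacent to $u_0$ iff $u_{n-j}$ is, i.e.\ the neighbourhood $\theta(R^{\pm})$ of $u_0$ is closed under negation modulo $n$. A subset $T\subseteq\mathbb{Z}_n^*$ closed under $x\mapsto n-x$ is exactly one of the form $S^{\pm}$ with $S=T\cap[1,\frac n2]$. Thus the condition holds iff $\theta(R^{\pm})=S^{\pm}$ for some $S\subseteq[1,\frac n2]$, which closes the chain of equivalences.
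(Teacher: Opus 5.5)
Your easy directions are right. Since $\theta(0)=0$, the neighbourhood of $u_0$ in the image is $\theta(R^{\pm})$, which gives $\theta(C_n(R))=C_n(S)\Rightarrow\theta(R^{\pm})=S^{\pm}$. Likewise, the symmetric equidistance condition at $v_0$ is exactly closure of $\theta(R^{\pm})$ under negation, i.e.\ $\theta(R^{\pm})=S^{\pm}$ for some $S\subseteq[1,\frac n2]$. Your fact (b) is also correct.

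The gap is the one substantive implication: that symmetry of $\theta(R^{\pm})$ forces \emph{every} vertex of the image to have difference set $\theta(R^{\pm})$, so that the image really is $C_n(S)$. You name this as the main obstacle and leave it open, and neither of your proposed exits works.
Closure of $R^{\pm}$ under negation and $m_i^3\mid n$ hold for every $t$, yet the image need not be circulant. For $\theta_{16,2,1}(C_{16}(1,2,7))$ in Problem~\ref{p3.13}, the difference set is $\{1,2,3,9,11,14\}$ at even vertices and $\{2,5,7,13,14,15\}$ at odd ones. So the hypothesis you must use is the symmetry of $\theta(R^{\pm})$ itself.
Your fallback, reading $\theta(C_n(R))=C_n(S)$ as the formal identity $C_n(\theta(R))=C_n(S)$, turns the theorem into a statement about jump sets only. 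The paper means the actual image graph: Table 2 lists values of $t$ for which it is not of the form $C_{16}(X)$. Its ``Conversely'' paragraph is devoted precisely to the step you skip.

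The step does hold, by exact bookkeeping over residues. Put $m=m_i$ and $R_b=\{s\in R^{\pm}: s\equiv b\pmod m\}$, so that $-R_b=R_{m-b}$. Your formula $\theta(x+s)-\theta(x)=s+(j'-j)tm$ shows that the difference set at $\theta(v_x)$ depends only on $j=x \bmod m$. It equals
$D_j=\bigcup_{b<m-j}(R_b+btm)\cup\bigcup_{b\ge m-j}(R_b+(b-m)tm)$, with $D_0=\theta(R^{\pm})$.
The pieces lie in distinct residue classes mod $m$. Hence all $D_j$ coincide with $D_0$ if and only if $R_b+m^2t=R_b$ for every $b\neq 0$.
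On the other hand, compare the class $m-b$ in $D_0$ and in $-D_0$. This gives $R_{m-b}-btm=R_{m-b}+(m-b)tm$, i.e.\ again $R_{m-b}+m^2t=R_{m-b}$.
So $D_0=-D_0$ implies $D_j=D_0$ for all $j$. The image is then the Cayley graph of $\mathbb{Z}_n$ with connection set $D_0=S^{\pm}$, namely $C_n(S)$.
This makes rigorous the paper's informal ``uniform movement of the last edges'' argument, and it needs no divisibility condition beyond $m\mid n$.
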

\begin{proof}\quad Obviously, for a given $C_n(R)$, $\theta_{n,m_i,t}(C_n(R))$ = $C_n(S)$ if and only if $\theta_{n,m_i,t}(R \cup (n-R))$ = $S \cup (n-S)$ if and only if $\theta_{n,m_i,t}(C_n(R))$ satisfies the symmetric equidistant condition w.r.t. $v_0$ for some $t$ and $S$ $\ni$ $0 \leq t \leq \frac{n}{m_i}-1$ and $S \subseteq [1, \frac{n}{2}]$.  

Conversely, let $\theta_{n,m_i,t}(C_n(R))$ satisfy the symmetric equidistant condition w.r.t. $v_0$. To prove that $\theta_{n,m_i,t}(C_n(R))$ = $C_n(S)$ for some $S \subseteq [1, \frac{n}{2}]$. For convenience, let $m_i$ = $\gcd(n, r_i) > 1$, $1 \leq i \leq k$. In $C_n(R)$, $v_q v_{q+r} \in E(\Gamma_j)$ if and only if $v_q v_{q+r} v_{q+2r} \dots v_q \in \Gamma_j$, using subscript arithmetic modulo $n$ where $\Gamma_j$ is the periodic cycle of period $r$ and length $\frac{n}{\gcd(n, r)}$ in $C_n(R)$ and contains $v_j$, $0 \leq j \leq$ $m_i-1$, $r \in R$ and $0 \leq q \leq n-1$. By the transformation $\theta_{n,m_i,t}$, for some $t$, each $\Gamma_j$ simply rotates $tm_i$ positions (points) w.r.t. $\Gamma_{j-1}$ in the regular $n$-gon in the clockwise direction and thereby the vertices of each $V_j$ = $V(\Gamma_j)$ are taking the positions of the vertices of $V_j$ only, $0 \leq t \leq \frac{n}{m_i}-1$ and $0 \leq j \leq m_i-1$. Thus by this transformation the consecutive edges, except the last one, of any periodic cycle of $C_n(R)$ connecting different $\Gamma_j$'s (not belonging to $\Gamma_j$), starting from any vertex of $\Gamma_0,$ change uniformly, $j$ = $0,1, \dots, m_i-1$. Since the transformed graph satisfies the symmetric equidistant condition w.r.t. $v_0$ (and hence w.r.t. each vertices of $\Gamma_0$), the resultant changes on the last edges of each periodic cycle (each starting from $\Gamma_0$) of $C_n(R)$ also follows the same uniform (or periodic) movements and thereby all the transformed cycles corresponding to the periodic cycles of $C_n(R)$ are also periodic cycles in the transformed graph $\theta_{n,m_i,t}(C_n(R))$ and thereby the resultant graph $\theta_{n,m_i,t}(C_n(R))$ = $C_n(S)$ for some $S \subseteq [1, \frac{n}{2}]$. Hence the result follows. 
\end{proof}

The transformation $\theta_{16,2,t}$ acting on $C_{16}(1,2,7)$ is illustrated by problem \ref{p3.13}. In $C_{16}(1,2,7)$, $r_2$ = 2 is the only jump size such that $\gcd(n, r_2)$ = 2 = $m_2 > 1$. This implies that there exist two disjoint periodic cycles of period 2, namely, $\Gamma_0$ = $(v_0 v_2 v_4 \dots v_0)$ and $\Gamma_1$ = $(v_1 v_3 v_5 \dots v_1)$ in $C_{16}(1,2,7)$. As $t$ changes, the second cycle $\Gamma_1$ of period $2$ also changes. $G_0$, $G_1$, $G_2$, $G_3$ are the transformed graphs isomorphic to $C_{16}(1,2,7)$ corresponding to $t$ = 0, 1, 2, 3, respectively.  See Figures 7,8,9,10. $G_2$ is a circulant graph in its representation and is isomorphic (non-Adam's) to $C_{16}(1,2,7)$. $G_1$ and $G_3$ are not circulant in their representation in the sense that it is not of the form $C_n(S)$ for any $S \subseteq [1, \frac{n}{2}]$. 

\begin{rem}\quad \label{r3.9} Following steps are used to establish isomorphism of Type-2 w.r.t. $m$ between circulant graphs $C_n(R)$ and $C_n(S)$. (i) $R \neq S$ and $|R|$ = $|S| \geq 3$; (ii) $\exists$ $r\in R,S$, $m > 1$ $\ni$ $m$ and $m^3$ are divisors of $\gcd(n, r)$ and $n$, respectively, and for some $t$ $\ni$ $1 \leq t \leq \frac{n}{m} -1$, $\theta_{n,m,t}(C_n(R))$ = $C_n(S)$ and (iii) $S \neq xR$ for all $x\in\varphi_n$ under arithmetic reflexive modulo $n$. 
\end{rem} 

\begin{rem} \label{r3.10}  While searching for possible values of $t$ for which the transformed graph $\theta_{n,m,t}(C_n(R))$ is circulant of the form $C_n(S)$ for some $S \subseteq [1, \frac{n}{2}]$,  calculation on $r_i$s which are integer multiples of $m$ need not be done  under the transformation $\theta_{n,m,t}$ as there is no change in these $r_i$s where $m > 1$, $m$ and $m^3$ are divisors of $\gcd(n, r)$ and $n$, respectively, and $r\in R$. Also, for a given circulant graph $C_n(R)$, w.r.t. different values of $m$, we may get different Type-2 isomorphic circulant graphs.
\end{rem} 

\begin{note} \label{n3.11}  {\rm We consider $t$ = $0,1,\dots,\frac{n}{\gcd(n, r)}-1$ in the transformation $\theta_{n,m,t}$ on $C_n(R)$ since the length of a periodic cycle of period $r$ in $C_n(R)$ is $\frac{n}{\gcd(n, r)}$, using Theorem \ref{t1.2}, where $m > 1$, $m$ and $m^3$ are divisors of $\gcd(n,r)$ and $n$, respectively, and $r\in R$.}
\end{note}
         
\begin{rem} \label{r3.12} From the above examples, it is noted that  there are Type-1 isomorphic circulant graphs $C_n(R)$ and $C_n(S)$ such that $\theta_{n,m,t}(C_n(R))$ = $C_n(S)$ for some $t\in\mathbb{N}$ with $R \neq S$, $r\in R,S$, $m > 1$, $m$ and $m^3$ are divisors of $\gcd(n,r)$ and $n$, respectively, $r\in R,S$ and $1 \leq t \leq \frac{n}{m} - 1$. And so a separate study is needed to find conditions under which such type of isomorphic circulant graphs are found. 
\end{rem}

Using definition \ref{d3.4}, we show in the following problem that circulant graphs $C_{16}(1,2,7)$ and $C_{16}(2,3,5)$ are Type-2 isomorphic w.r.t. $m$ = 2. We also consider more problems to understand defefinition \ref{d3.4} and remarks \ref{r3.9} and \ref{r3.10}.

\begin{prm}\quad \label{p3.13} {\rm Show that circulant graphs $C_{16}(1,2,7)$ and $C_{16}(2,3,5)$ are Type-2 isomorphic w.r.t. $m$ = 2. Also, find $\theta_{16,2,t}(C_{16}(1,2,7))$ for all possible values of $t$. }
\end{prm}
\noindent
{\bf Solution.}\quad Let $R$ = $\{1,2,7\}$, $S$ = $R \cup (16-R)$, $T$ = $\{2,3,5\}$, $n$ = 16, $r$ = 2 and $m$ = 2. This implies, $r$ = $2\in R,T$, $\gcd(n, r)$ = $\gcd(16, 2)$ = 2 = $m$ and $m^3$ = 8 divides $n$ = 16. We calculate $\theta_{16,2,t}(s)$ = $s+2jt$ and $\theta_{16,2,t}(C_{16}(1,2,7))$ = $\theta_{16,2,t}(C_{16}(1,2,7, 9,14,15))$ = $C_{16}(\theta_{16,2,t}(1,2,7, 9,14,15))$  for $t$ = 1 to $\frac{16}{\gcd(16, 2)}-1$ = 7, $s$ = $2q+j$, $0 \leq j \leq 1$, $s\in S$ and $q\in\mathbb{N}_0$ and present it in Table 2. From Table 2, we get,  
\\
$\theta_{16,2,0}(S)$ = $\theta_{16,2,0}(\{1,2,7, 9,14,15\})$ = $\{1,2,7, 9,14,15\}$. 

$\Rightarrow$ $\theta_{16,2,0}(C_{16}(R))$ = $C_{16}(R)$;
\\
$\theta_{16,2,1}(S)$ = $\theta_{16,2,1}(\{1,2,7, 9,14,15\})$ = $\{1,2,3, 9,11,14\}$. 

$\Rightarrow$   $\theta_{16,2,1}(C_{16}(R))$ = $\theta_{16,2,1}(C_{16}(S))$ $\neq$ $C_{16}(X)$ for any $X$ using Theorem \ref{c10}; 
\\
$\theta_{16,2,2}(S)$ = $\theta_{16,2,2}(\{1,2,7, 9,14,15\})$ = $\{2,3,5, 11,13,14\}$. 

$\Rightarrow$  $\theta_{16,2,2}(C_{16}(R))$ = $\theta_{16,2,2}(C_{16}(S))$ = $C_{16}(2,3,5)$; 
\\
$\theta_{16,2,3}(S)$ = $\theta_{16,2,3}(\{1,2,7, 9,14,15\})$ = $\{2,5,7, 13,14,15\}$. 

$\Rightarrow$  $\theta_{16,2,3}(C_{16}(R))$ = $\theta_{16,2,3}(C_{16}(S))$ $\neq$ $C_{16}(X)$ for any $X$ using Theorem \ref{c10};  
\\
$\theta_{16,2,4}(S)$ = $\theta_{16,2,4}(\{1,2,7, 9,14,15\})$ = $\{1,2,7, 9,14,15\}$. 

$\Rightarrow$  $\theta_{16,2,4}(C_{16}(R))$ = $\theta_{16,2,4}(C_{16}(S))$ = $C_{16}(R)$; 
\\
$\theta_{16,2,5}(S)$ = $\theta_{16,2,5}(\{1,2,7, 9,14,15\})$ = $\{1,2,3, 9,11,14\}$. 

$\Rightarrow$   $\theta_{16,2,5}(C_{16}(R))$ = $\theta_{16,2,5}(C_{16}(S))$ $\neq$ $C_{16}(X)$ for any $X$ using Theorem \ref{c10}; 
\\
$\theta_{16,2,6}(S)$ = $\theta_{16,2,6}(\{1,2,7, 9,14,15\})$ = $\{2,3,5, 11,13,14\}$. 

$\Rightarrow$  $\theta_{16,2,6}(C_{16}(R))$ = $\theta_{16,2,6}(C_{16}(S))$ = $C_{16}(2,3,5)$;
\\
$\theta_{16,2,7}(S)$ = $\theta_{16,2,7}(\{1,2,7, 9,14,15\})$ = $\{2,5,7, 13,14,15\}$. 

$\Rightarrow$ $\theta_{16,2,7}(C_{16}(R))$ = $\theta_{16,2,7}(C_{16}(S))$ $\neq$ $C_{16}(X)$ for any $X$ using Theorem \ref{c10}.

It is clear from Table 2 that there are 4 distinct isomorphic graphs $\theta_{16,2,t}(C_{16}(1,2,7))$, without vertex label, for $t$ = 0, 1, 2,  3 and these graphs are $G_0$ = $\theta_{16,2,0}(C_{16}(R))$ = $C_{16}(R)$, $G_1$ = $\theta_{16,2,1}(C_{16}(R))$; $G_2$ = $\theta_{16,2,2}(C_{16}(R))$ = $C_{16}(T)$ and $G_3$ = $\theta_{16,2,3}(C_{16}(R))$ which are shown in Figures 7 to 10.

Also, $\theta_{16,2,2}(C_{16}(1,2,7))$ = $C_{16}(2,3,5)$ which implies, $C_{16}(1,2,7)$ $\cong$ $C_{16}(2,3,5)$. 
\\
$Ad_{16}(C_{16}(1,2,7))$ = $\{\varphi_{16,x}(C_{16}(1,2,7)): x = 1,3,5,7,9,11,13,15\}$ 

\hspace{2.25cm} = $\{C_{16}(x(1,2,7)): x = 1,3,5,7,9,11,13,15\}$ = $\{C_{16}(1,2,7), C_{16}(3,5,6)\}$ 

\hspace{2.25cm} = $\{C_{16}(x(1,2,7)): x = 1,3\}$. This implies, $C_{16}(2,3,5)$ $\notin Ad_{16}(C_{16}(1,2,7))$. 

Hence, $C_{16}(1,2,7)$ and $C_{16}(2,3,5)$ are Type-2 isomorphic w.r.t. $m$ = 2.  \hfill $\Box$
\begin{table}
	\caption{ Calculation of $\theta_{16,2,t}(\{1,2,7, 9,14,15\})$ for $t$ = 0 to 7.}
\begin{center}
\scalebox{.9}{
      \begin{tabular}{||c||*{6}{c|}|c||c||}\hline \hline 
	Jump size $x$ &  1 & 2 & 7 & 9 & 14 & 15 & Pairwise Equidistant  \\ \cline{1-7} 
\backslashbox{ {\hspace{.5cm} $t$}}{$\theta_{16,2,t}(x)$} & $1+2t$ & ~~2 ~~&
		$7+2t$ & $9+2t$ & ~~14~~ & $15+2t$ & from $v_0$ or not 	\\\hline \hline
			0 & 1 & 2 & 7 & 9 & 14 & 15 & Yes (Identity) \\
			1 & 3 & 2 & 9 & 11 & 14 & 1 & Not\\
			2 & 5 & 2 & 11 & 13 & 14 & 3 & Yes (Type-2) \\
			3 & 7 & 2 & 13 & 15 & 14 & 5 & Not\\		\hline\hline 
			4 & 9 & 2 & 15 & 1 & 14 & 7 & Yes (Identity) \\
			5 & 11 & 2 & 1 & 3 & 14 & 9 & Not \\
			6 & 13 & 2 & 3 & 5 & 14 & 11 & Yes  (Type-2)\\
			7 & 15 & 2 & 5 & 7 & 14 & 13 & Not \\		\hline \hline
	\end{tabular}}
\end{center}
\end{table} 
\begin{figure}[ht]
	\centerline{\includegraphics[width=5in]{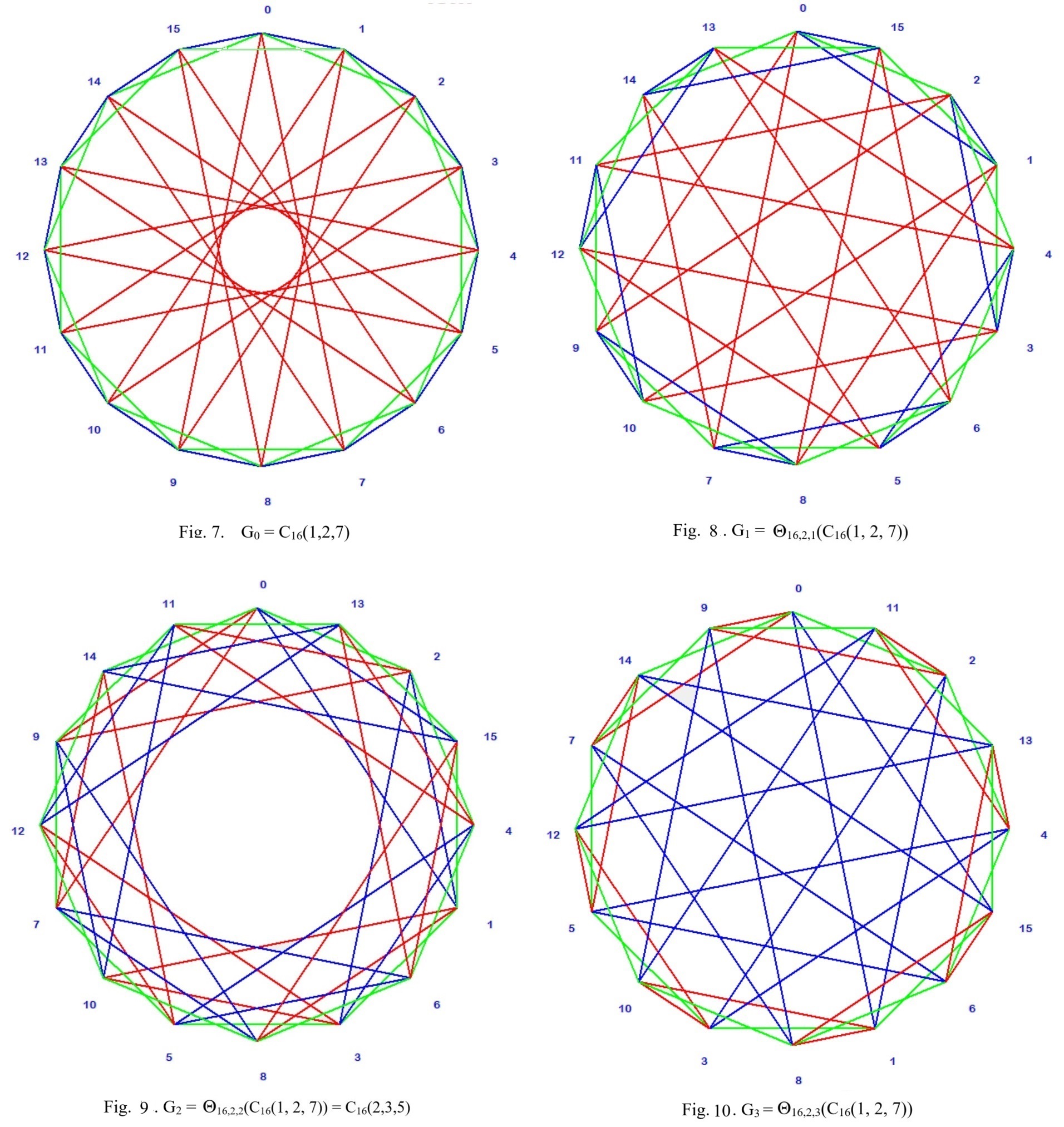}}
\end{figure}

\begin{prm}\quad \label{p3.14} {\rm Show that the following statements are true.

(i) $C_{16}(1,2,4,7,8)$ and $C_{16}(2,3,4,5,8)$ are Type-2 isomorphic w.r.t. $m$ = 2; and 

(ii) $C_{16}(1,2,4,6,7)$ and $C_{16}(2,3,4,5,6)$ are Type-1 isomorphic.  }
\end{prm}
\noindent
{\bf Solution.}\quad (i)  Let $R$ = $\{1,2,4,7,8\}$, $S$ = $R \cup (16-R)$, $T$ = $\{2,3,4,5,8\}$, $n$ = 16, $r$ = 2 and $m$ = 2. This implies, $r$ = $2\in R,T$, $\gcd(n, r)$ = $\gcd(16, 2)$ = 2 and $m^3$ = 8 divides $n$ = 16. We have $\theta_{16,2,t}(s)$ = $s+2jt$, $s\in R$, $0 \leq j \leq 1$ and $0 \leq t \leq 7$. Consider,  
\\
 $\theta_{16,2,2}(C_{16}(1,2,4,7, 8))$ = $\theta_{16,2,2}(C_{16}(1,2,4,7, 8, 9,12,14,15))$ 

\hspace{3.1cm} = $C_{16}(\theta_{16,2,2}(1,2,4,7, 8, 9,12,14,15))$  = $C_{16}(5,2,4,11, 8, 13,12,14,3)$ 

\hspace{3.1cm} = $C_{16}(2,3,4,5, 8, 11,12,13,14)$ = $C_{16}(2,3,4,5, 8)$. 

$\Rightarrow$ $C_{16}(1,2,4,7, 8)$ $\cong$ $C_{16}(2,3,4,5, 8)$.
\\
$Ad_{16}(C_{16}(1,2,4,7, 8))$ = $\{\varphi_{16,x}(C_{16}(1,2,4,7, 8): x = 1,3,5,7,9,11,13,15\}$ 

\hspace{1.7cm} = $\{C_{16}(x(1,2,4,7, 8)): x = 1,3,5,7,9,11,13,15\}$ = $\{C_{16}(1,2,4,7, 8), C_{16}(3,4,5,6, 8)\}$ 

\hfill = $\{C_{16}(x(1,2,4,7, 8)): x = 1,3\}$. This implies, $C_{16}(2,3,4,5, 8)$ $\notin Ad_{16}(C_{16}(1,2,4,7, 8))$. 

Hence, $C_{16}(1,2,4,7, 8)$ and $C_{16}(2,3,4,5, 8)$ are Type-2 isomorphic w.r.t. $m$ = 2.  
\\
(ii) $C_{16}(3(1,2,4,6,7))$ = $C_{16}(3(1,2,4,6,7, 9,10,12,14,15))$ = $C_{16}(3,6,12,2,5, 11,14,4,10,13)$

\hspace{2.9cm} = $C_{16}(2,3,4,5,6, 10,11,12,13,14)$ = $C_{16}(2,3,4,5,6)$ and 
\\
$Ad_{16}(C_{16}(1,2,4,6,7))$ = $\{C_{16}(1,2,4,6,7), C_{16}(2,3,4,5,6)\}$. 

$\Rightarrow$ $C_{16}(2,3,4,5,6)$ = $C_{16}(3(1,2,4,6,7))\in Ad_{16}(C_{16}(1,2,4,6,7))$ = $T1_{16}(C_{16}(1,2,4,6,7))$. 

This implies,  $C_{16}(1,2,4,6,7)$ and $C_{16}(2,3,4,5,6)$ are Type-1 isomorphic.  \hfill $\Box$

\begin{prm}\quad \label{p3.15} {\rm Show that the following statements are true. 

(i) $C_{24}(1,2,8,11)$ and $C_{24}(2,5,7,8)$ are Type-2 isomorphic w.r.t. $m$ = 2; and 

(ii) $C_{24}(1,2,10,11)$ and $C_{24}(2,5,7,10)$ are Type-1 isomorphic.   }
\end{prm}
\noindent
{\bf Solution.}\quad (i)  Let $R$ = $\{1,2,8,11\}$, $S$ = $R \cup (24-R)$, $T$ = $\{2,5,7,8\}$, $n$ = 24, $r$ = 2 and $m$ = 2. This implies, $r$ = $2\in R,T$, $\gcd(n, r)$ = $\gcd(24, 2)$ = 2 and $m^3$ = 8 divides $n$ = 24. We have $\theta_{24,2,t}(s)$ = $s+2jt$, $s\in R$, $0 \leq j \leq 1$ and $0 \leq t \leq 11$. Consider,  
\\
 $\theta_{24,2,3}(C_{24}(1,2,8,11))$ = $\theta_{24,2,3}(C_{24}(1,2,8,11,  13,16,22,23))$ 

\hspace{3cm} = $C_{24}(\theta_{24,2,3}(1,2,8,11,  13,16,22,23))$  = $C_{24}(7,2,8,17,  19,16,22,5)$ 

\hspace{3cm} = $C_{24}(2,5,7,8,  16,17,19,22)$ = $C_{24}(2,5,7,8)$. 

$\Rightarrow$ $C_{24}(1,2,8,11)$ $\cong$ $C_{24}(2,5,7,8)$.
\\
$Ad_{24}(C_{24}(1,2,8,11))$ = $\{\varphi_{24,x}(C_{24}(1,2,8,11): x = 1,5,7,11\}$ 

\hspace{2.4cm} = $\{C_{24}(x(1,2,8,11)): x = 1,5,7,11\}$ = $\{C_{24}(1,2,8,11), C_{24}(5,7,8,10)\}$ 

\hfill = $\{C_{24}(x(1,2,8,11)): x = 1,5\}$. This implies, $C_{24}(2,5,7,8) \notin Ad_{24}(C_{24}(1,2,8,11))$. 

Hence, $C_{24}(1,2,8,11)$ and $C_{24}(2,5,7,8)$ are Type-2 isomorphic w.r.t. $m$ = 2.  

Type-2 isomorphic $C_{24}(1,2,8,11)$ and $C_{24}(2,5,7,8)$ w.r.t. $m$ = 2 are given in Figures 11 and 12.
\\
(ii) $C_{24}(5(1,2,10,11))$ = $C_{24}(5(1,2,10,11, 13,14,22,23))$ = $C_{24}(5,10,2,7, 17,22,14,19)$

\hspace{2.9cm} = $C_{24}(2,5,7,10, 14,17,19,22)$ = $C_{24}(2,5,7,10)$ and
\\
$Ad_{24}(C_{24}(1,2,10,11))$ = $\{C_{24}(1,2,10,11), C_{24}(2,5,7,10)\}$. 

$\Rightarrow$ $C_{24}(2,5,7,10)$ = $C_{24}(5(1,2,10,11))\in Ad_{24}(C_{24}(1,2,10,11))$ = $T1_{24}(C_{24}(1,2,10,11))$. 

This implies,  $C_{24}(1,2,10,11)$ and $C_{24}(2,5,7,10)$ are Type-1 isomorphic.

Type-1 isomorphic $C_{24}(1,2,10,11)$ and $C_{24}(2,5,7,10)$ are given in Figures 13 and 14.  \hfill $\Box$
\begin{figure}[ht]
	\centerline{\includegraphics[width=5in]{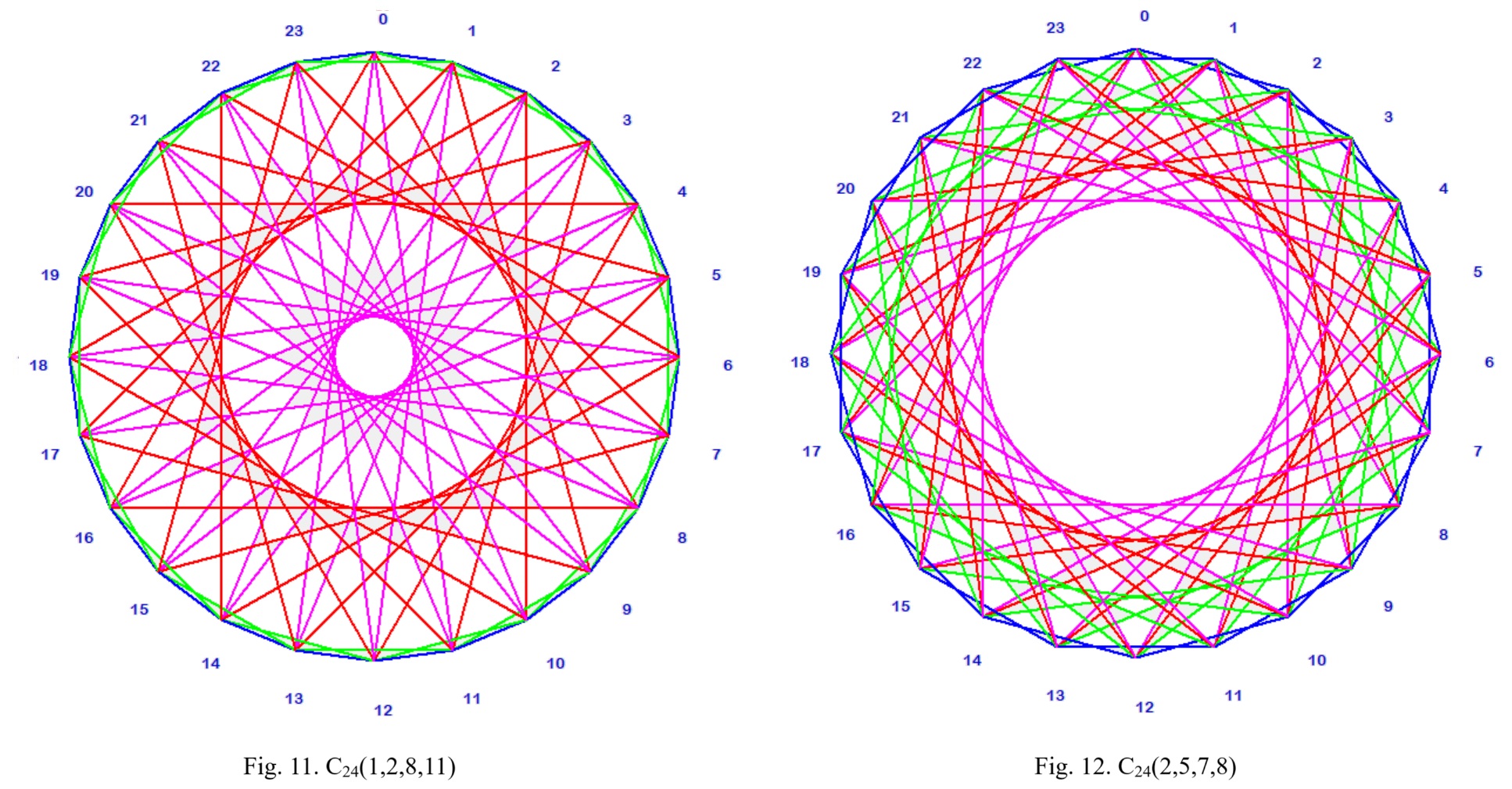}}
\end{figure}
\begin{figure}[ht]
	\centerline{\includegraphics[width=5in]{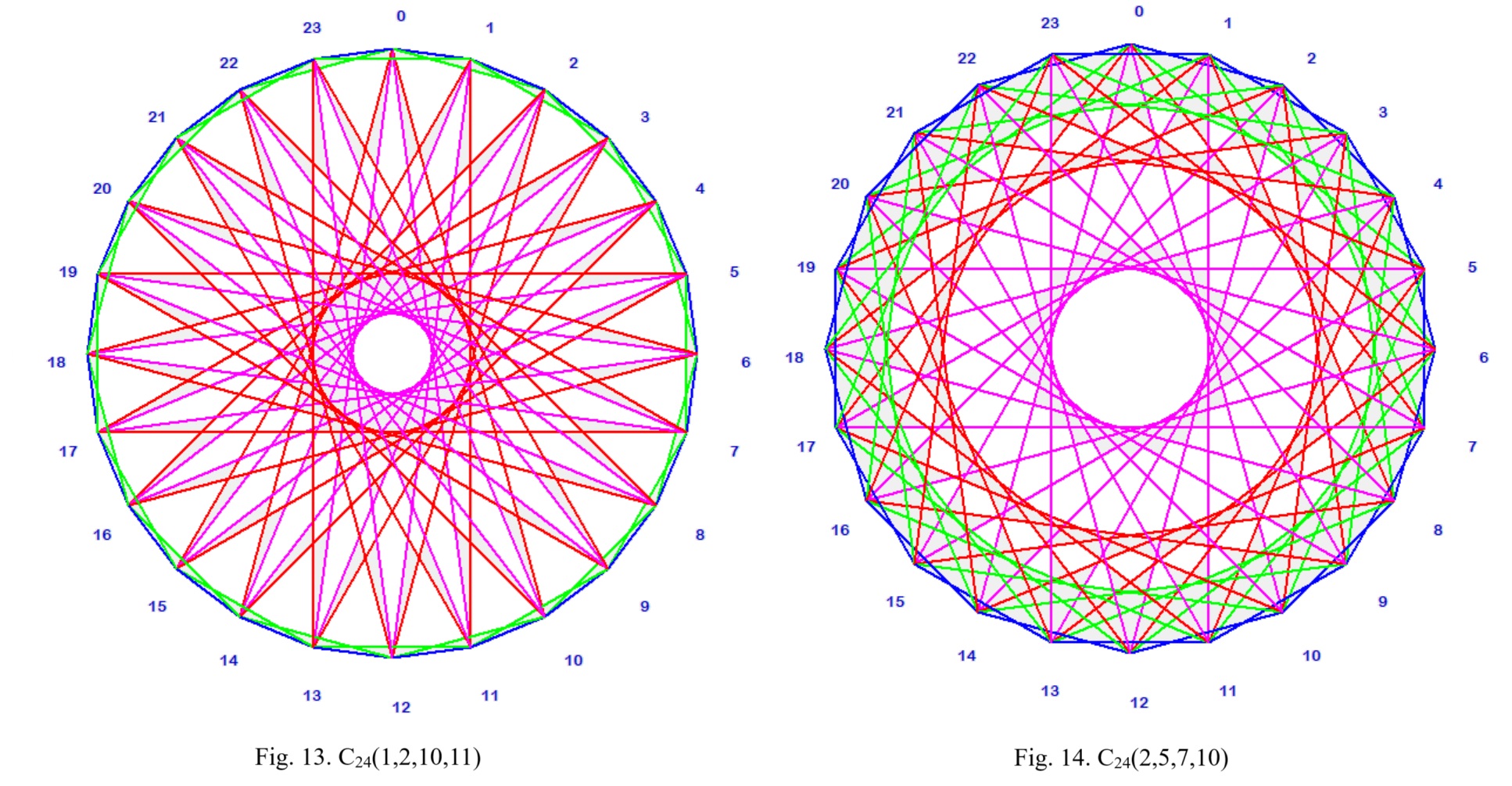}}
\end{figure}

\begin{prm}\quad \label{p3.16} {\rm Show that the following statements are true. 

(i) $C_{27}(1,3,8,10)$, $C_{27}(3,4,5,13)$ and $C_{27}(2,3,7,11)$ are isomorphic of Type-2 w.r.t. $m$ = 3; and 

(ii) $C_{27}(1,8,10)$, $C_{27}(4,5,13)$ and $C_{27}(2,7,11)$ are Type-1 isomorphic.   }
\end{prm}
\noindent
{\bf Solution.}\quad (i)  Let $R$ = $\{1,3,8,10\}$, $R'$ = $R \cup (27-R)$, $S$ = $\{3,4,5,13\}$, $T$ = $\{2,3,7,11\}$, $n$ = 27, $r$ = 3 and $m$ = 3. This implies, $r$ = $3\in R,S,T$, $\gcd(n, r)$ = $\gcd(27, 3)$ = 3 and $m^3$ = 27 divides $n$ = 27. We have $\theta_{27,3,t}(s)$ = $s+3jt$, $s\in R'$, $0 \leq j \leq 2$ and $0 \leq t \leq 8$. Consider,  
\\
 $\theta_{27,3,1}(C_{27}(1,3,8,10))$ = $\theta_{27,3,1}(C_{27}(1,3,8,10,  17,19,24,26))$ 

\hspace{3cm} = $C_{27}(\theta_{27,3,1}(1,3,8,10,  17,19,24,26))$  = $C_{27}(4,3,14,13,  23,22,24,5)$ 

\hspace{3cm} = $C_{27}(3,4,5,13,  14,22,23,24)$ = $C_{27}(3,4,5,13)$ and 
\\
 $\theta_{27,3,2}(C_{27}(1,3,8,10))$ = $\theta_{27,3,2}(C_{27}(1,3,8,10,  17,19,24,26))$ 

\hspace{3cm} = $C_{27}(\theta_{27,3,2}(1,3,8,10,  17,19,24,26))$  = $C_{27}(7,3,20,16,  2,25,24,11)$ 

\hspace{3cm} = $C_{27}(2,3,7,11,  16,20,24,25)$ = $C_{27}(2,3,7,11)$.

$\Rightarrow$ $C_{27}(1,3,8,10)$ $\cong$ $C_{27}(3,4,5,13)$ and $C_{27}(1,3,8,10)$ $\cong$ $C_{27}(2,3,7,11)$.
\\
$Ad_{27}(C_{27}(1,3,8,10))$ = $\{\varphi_{27, x}(C_{27}(1,3,8,10): x = 1,2,4,5,7,8,10,11,13\}$ 

\hspace{2.7cm} = $\{C_{27}(x(1,3,8,10)): x = 1,2,4,5,7,8,10,11,13\}$ 

\hfill  = $\{C_{27}(1,3,8,10), C_{27}(2,6,7,11), C_{27}(4,5,12,13)\}$ = $\{C_{27}(x(1,3,8,10)): x = 1,2,4\}$.

 This implies, $C_{27}(3,4,5,13),  C_{27}(2,3,7,11) \notin Ad_{27}(C_{27}(1,3,8,10))$. 

Hence, $C_{27}(1,3,8,10)$ and $C_{27}(3,4,5,13)$ as well as  $C_{27}(1,3,8,10)$ and $C_{27}(2,3,7,11)$ are Type-2 isomorphic w.r.t. $m$ = 3.  
\\
(ii) $C_{27}(2(1,8,9,10))$ = $C_{27}(2(1,8,9,10, 17,18,19,26))$ = $C_{27}(2,16,18,20, 7,9,11,25)$

\hspace{2.5cm} = $C_{27}(2,7,9,11, 16,18,20,25)$ = $C_{27}(2,7,9,11)$,  

~ $C_{27}(4(1,8,9,10))$ = $C_{27}(4(1,8,9,10, 17,18,19,26))$ = $C_{27}(4,5,9,13, 14,18,22,23)$ = $C_{27}(4,5,9,13)$ and
\\
$Ad_{27}(C_{27}(1,8,9,10))$ = $\{C_{27}(1,8,9,10), C_{27}(2,7,9,11), C_{27}(4,5,9,13)\}$.

This implies,  $C_{27}(1,8,9,10)$,  $C_{27}(2,7,9,11)$ and $C_{27}(4,5,9,13)$ are Type-1 isomorphic.  \hfill $\Box$

\begin{prm}\quad \label{p3.17} {\rm Show that the following statements are true. 

(i) $C_{48}(1,2,23)$ and $C_{48}(2,11,13)$ are Type-2 isomorphic w.r.t. $m$ = 2;  

(ii) $C_{48}(1,6,23)$ and $C_{48}(6,11,13)$ are Type-2 isomorphic w.r.t. $m$ = 2;

(iii) $C_{48}(1,4,23)$ and $C_{48}(4,11,13)$ are Type-1 isomorphic.    }
\end{prm}
\noindent
{\bf Solution.}\quad (i)  Let $R$ = $\{1,2,23\}$, $S$ = $R \cup (48-R)$, $T$ = $\{2,11,13\}$, $n$ = 48, $r$ = 2 and $m$ = 2. This implies, $r$ = $2\in R,T$, $\gcd(n, r)$ = $\gcd(48, 2)$ = 2 and $m^3$ = 8 divides $n$ = 48. We have $\theta_{48,2,t}(s)$ = $s+2jt$, $s\in R$, $0 \leq j \leq 1$ and $0 \leq t \leq 23$. Consider,  
\\
 $\theta_{48,2,6}(C_{48}(1,2,23))$ = $\theta_{48,2,6}(C_{48}(1,2,23, 25,46,47))$ = $C_{48}(\theta_{48,2,6}(1,2,23,  25,46,47))$ 

\hspace{2.6cm}  = $C_{48}(13,2,35,  37,46,11)$ = $C_{48}(2,11,13,  35,37,46)$ = $C_{48}(2,11,13)$. 

$\Rightarrow$ $C_{48}(1,2,23)$ $\cong$ $C_{48}(2,11,13)$.
\\
$Ad_{48}(C_{48}(1,2,23))$ = $\{\varphi_{48, x}(C_{48}(1,2,23): x = 1,5,7,11,13,17,19,23\}$ 

\hspace{2.4cm}  = $\{C_{48}(1,2,23), C_{48}(5,10,19), C_{48}(7,14,17), C_{48}(11,13,22)\}$. 

 This implies, $C_{48}(2,11,13) \notin Ad_{48}(C_{48}(1,2,23))$. 

Hence, $C_{48}(1,2,23)$ and $C_{48}(2,11,13)$ are Type-2 isomorphic w.r.t. $m$ = 2.  
\\
(ii)  $\theta_{48,2,6}(C_{48}(1,6,23))$ = $\theta_{48,2,6}(C_{48}(1,6,23,  25,42,47))$ =  $C_{48}(13,2,35,  37,46,11)$ = $C_{48}(2,11,13)$. 

$\Rightarrow$ $C_{48}(1,6,23)$ $\cong$ $C_{48}(6,11,13)$.
\\
$Ad_{48}(C_{48}(1,6,23))$ = $\{\varphi_{48, x}(C_{48}(1,6,23): x = 1,5,7,11,13,17,19,23\}$ 

\hspace{2.4cm} = $\{C_{48}(x(1,6,23)): x = 1,5,7,11,13,17,19,23\}$ 

\hspace{2.4cm} = $\{C_{48}(1,6,23), C_{48}(5,18,19), C_{48}(6,7,17), C_{48}(11,13,18)\}$. 

 This implies, $C_{48}(6,11,13) \notin Ad_{48}(C_{48}(1,6,23))$. 

Hence, $C_{48}(1,6,23)$ and $C_{48}(6,11,13)$ are Type-2 isomorphic w.r.t. $m$ = 2.    
\\
(iii) $C_{48}(11(1,4,23))$ =  $C_{48}(11(1,4,23, 25,44,47))$ = $C_{48}(11,44,13, 35,4,37)$ = $C_{48}(4,11,13)$ and
\\
$Ad_{48}(C_{48}(1,4,23))$ = $\{C_{48}(1,4,23), C_{48}(5,18,19), C_{48}(6,7,17), C_{48}(11,13,18)\}$ 

~ This implies, $C_{48}(1,4,23)$ and $C_{48}(4,11,13)$ are Type-1 isomorphic.               \hfill $\Box$

\begin{prm}\quad \label{p3.18} {\rm Show that the following statements are true.

 (i) $C_{54}(1,3,17,19)$, $C_{54}(3,7,11,25)$ and $C_{54}(3,5,13,23)$ are isomorphic of Type-2 w.r.t. $m$ = 3;   

(ii) $C_{54}(1,6,17,19)$, $C_{54}(6,7,11,25)$ and $C_{54}(5,6,13,23)$ are isomorphic of Type-2 w.r.t. $m$ = 3; and 

(iii) $C_{54}(1,17,18,19)$, $C_{54}(5,13,18,23)$ and $C_{54}(7,11,18,25)$ are Type-1 isomorphic.   }
\end{prm}
\noindent
{\bf Solution.}\quad (i)  Let $R$ = $\{1,3,17,19\}$, $R'$ = $R \cup (54-R)$, $S$ = $\{3,7,11,25\}$, $T$ = $\{3,5,13,23\}$, $n$ = 54, $r$ = 3 and $m$ = 3. This implies, $r$ = $3\in R,S,T$, $\gcd(n, r)$ = $\gcd(54, 3)$ = 3 and $m^3$ = 27 divides $n$ = 54. We have $\theta_{54,3,t}(s)$ = $s+3jt$, $s\in R'$, $0 \leq j \leq 2$ and $0 \leq t \leq 17$. Consider,  
\\
 $\theta_{54,3,2}(C_{54}(1,3,17,19))$ = $\theta_{54,3,2}(C_{54}(1,3,17,19, 35,37,51,53))$ 

\hspace{3.1cm} = $C_{54}(\theta_{54,3,2}(1,3,17,19, 35,37,51,53))$  = $C_{54}(7,3,29,25,  47,43,51,11)$ 

\hspace{3.1cm} = $C_{54}(3,7,11,25,  29,43,47,51)$ = $C_{27}(3,7,11,25)$ and 
\\
 $\theta_{54,3,4}(C_{54}(1,3,17,19))$ = $\theta_{54,3,4}(C_{54}(1,3,17,19, 35,37,51,53))$ = $C_{27}(13,3,41,31,  5,49,51,23)$ 

\hspace{3.1cm} = $C_{54}(3,5,13,23,  31,41,49,51)$ = $C_{54}(3,5,13,23)$.

$\Rightarrow$ $C_{54}(1,3,17,19)$ $\cong$ $C_{54}(3,7,11,25)$ and $C_{54}(1,3,17,19)$ $\cong$ $C_{54}(3,5,13,23)$.
\\
$Ad_{54}(C_{54}(1,3,17,19))$ = $\{\varphi_{54, x}(C_{54}(1,3,17,19): x = 1,5,7,11,13,17,19,23,25\}$ 

\hspace{3cm} = $\{C_{54}(x(1,3,17,19)): x = 1,5,7,11,13,17,19,23,25\}$ 

\hfill  = $\{C_{54}(1,3,17,19), C_{54}(5,13,15,23), C_{54}(7,11,21,25)\}$ = $\{C_{54}(x(1,3,17,19)): x = 1,5,7\}$.

 This implies, $C_{54}(1,3,17,19),  C_{54}(1,3,17,19) \notin Ad_{54}(C_{54}(1,3,17,19))$. 

Hence, $C_{54}(1,3,17,19)$ and $C_{54}(3,7,11,25)$ as well as  $C_{54}(1,3,17,19)$ and $C_{54}(3,5,13,23)$ are Type-2 isomorphic w.r.t. $m$ = 3.  
\\
(ii)  $\theta_{54,3,2}(C_{54}(1,6,17,19))$ = $\theta_{54,3,2}(C_{54}(1,6,17,19, 35,37,48,53))$ 

\hspace{3.7cm} = $C_{54}(\theta_{54,3,2}(1,6,17,19, 35,37,48,53))$  = $C_{54}(7,6,29,25,  47,43,48,11)$ 

\hspace{3.7cm} = $C_{54}(6,7,11,25,  29,43,47,48)$ = $C_{27}(6,7,11,25)$ and 

 $\theta_{54,3,4}(C_{54}(1,6,17,19))$ = $\theta_{54,3,4}(C_{54}(1,6,17,19, 35,37,48,53))$ = $C_{27}(13,6,41,31,  5,49,48,23)$ 

\hspace{3.5cm} = $C_{54}(5,6,13,23,  31,41,48,49)$ = $C_{54}(5,6,13,23)$.

$\Rightarrow$ $C_{54}(1,6,17,19)$ $\cong$ $C_{54}(6,7,11,25)$ and $C_{54}(1,6,17,19)$ $\cong$ $C_{54}(5,6,13,23)$.
\\
$Ad_{54}(C_{54}(1,6,17,19))$ = $\{\varphi_{54, x}(C_{54}(1,6,17,19): x = 1,5,7,11,13,17,19,23,25\}$ 

\hspace{2.9cm} = $\{C_{54}(x(1,6,17,19)): x = 1,5,7,11,13,17,19,23,25\}$ 

\hfill  = $\{C_{54}(1,6,17,19), C_{54}(5,13,23,24), C_{54}(7,11,12,25)\}$ = $\{C_{54}(x(1,6,17,19)): x = 1,5,7\}$.

 This implies, $C_{54}(6,7,11,25),  C_{54}(5,6,13,23) \notin Ad_{54}(C_{54}(1,6,17,19))$. 

Hence, $C_{54}(1,6,17,19)$ and $C_{54}(6,7,11,25)$ as well as  $C_{54}(1,6,17,19)$ and $C_{54}(5,6,13,23)$ are Type-2 isomorphic w.r.t. $m$ = 3.   

(iii) $C_{54}(5(1,17,18,19))$ = $C_{54}(5(1,17,18,19, 35,36,37,53))$ = $C_{54}(5,31,36,41, 13,18,23,49)$

\hspace{3.7cm}  = $C_{54}(5,13,18,23)$ and 

\hspace{.6cm}   $C_{54}(7(1,17,18,19))$ = $C_{54}(7(1,17,18,19, 35,36,37,53))$ = $C_{54}(7,11,18,25, 29,36,43,47)$ 

\hspace{3.7cm}   = $C_{54}(7,11,18,25)$ and
\\
$Ad_{54}(C_{54}(1,17,18,19))$ = $\{C_{54}(1,17,18,19), C_{54}(5,13,23,24), C_{54}(7,11,12,25)\}$.

This implies, $C_{54}(1,17,18,19)$, $C_{54}(5,13,18,23)$ and $C_{54}(7,11,18,25)$ are Type-1 isomorphic.  \hfill $\Box$

\begin{prm}\quad \label{p3.19} {\rm Show that the following statements are true.

 (i) $C_{108}(3,5,31,41)$, $C_{108}(3,7,29,43)$ and $C_{108}(3,17,19,53)$ are Type-2 isomorphic w.r.t. $m$ = 3;   

(ii) $C_{108}(5,12,31,41)$, $C_{108}(7,12,29,43)$ and $C_{108}(12,17,19,53)$ are Type-2 isomorphic w.r.t. $m$ = 3; 

(iii) $C_{108}(5,18,31,41)$, $C_{108}(7,18,29,43)$ and $C_{108}(17,18,19,53)$ are Type-1 isomorphic.   }
\end{prm}
\noindent
{\bf Solution.}\quad (i)  Let $R$ = $\{3,5,31,41\}$, $R'$ = $R \cup (108-R)$, $S$ = $\{3,7,29,43\}$, $T$ = $\{3,17,19,53\}$, $n$ = 108, $r$ = 3 and $m$ = 3. This implies, $r$ = $3\in R,S,T$, $\gcd(n, r)$ = $\gcd(108, 3)$ = 3 and $m^3$ = 27 divides $n$ = 108. We have $\theta_{108,3,t}(s)$ = $s+3jt$, $s\in R'$, $0 \leq j \leq 2$ and $0 \leq t \leq 35$. Consider,  
\\
 $\theta_{108,3,4}(C_{108}(3,5,31,41))$ = $\theta_{108,3,4}(C_{108}(3,5,31,41, 67,77,103,105))$ 

\hspace{3.4cm} = $C_{108}(\theta_{108,3,4}(3,5,31,41, 67,77,103,105))$ 

\hspace{3.4cm}  = $C_{108}(3,29,43,65, 79,101,7,105)$ = $C_{108}(3,7,29,43)$ and 
\\
 $\theta_{108,3,8}(C_{108}(3,5,31,41))$ = $\theta_{108,3,8}(C_{108}(3,5,31,41, 67,77,103,105))$ 

\hspace{3.4cm}  = $C_{108}(3,53,55,89, 91,17,19,105)$ = $C_{108}(3,17,19,53)$.

$\Rightarrow$ $C_{108}(3,5,31,41)$ $\cong$ $C_{108}(3,7,29,43)$ and $C_{108}(3,5,31,41)$ $\cong$ $C_{108}(3,17,19,53)$.
\\
$Ad_{108}(C_{108}(3,5,31,41))$ 

\hfill = $\{\varphi_{108, x}(C_{108}(3,5,31,41): x = 1,5,7,11,13,17,19,23,25,29,31,35,37,41,43,47,49,53\}$ 

\hspace{1.6cm} = $\{C_{108}(x(3,5,31,41)): x = 1,5,7,11,13,17,19,23,25,29,31,35,37,41,43,47,49,53\}$ 

\hspace{1.6cm}  = $\{C_{108}(3,5,31,41), C_{108}(11,15,25,47), C_{108}(1,21,35,37)$, 

\hfill $C_{108}(17,19,33,53), C_{108}(7,29,39,43), C_{108}(13,23,49,51)\}$.

 This implies, $C_{108}(3,7,29,43),  C_{108}(3,17,19,53) \notin Ad_{108}(C_{108}(3,5,31,41))$. 

Hence, $C_{108}(3,5,31,41)$ and $C_{108}(3,7,29,43)$ as well as  $C_{108}(3,5,31,41)$ and $C_{108}(3,17,19,53)$ are Type-2 isomorphic w.r.t. $m$ = 3.  
\\
(ii)  $\theta_{108,3,4}(C_{108}(5,12,31,41))$ = $\theta_{108,3,4}(C_{108}(5,12,31,41, 67,77,96,103))$ 

\hspace{4.1cm} = $C_{108}(\theta_{108,3,4}(5,12,31,41, 67,77,96,103))$  

\hspace{4.1cm}  = $C_{108}(29,12,43,65, 79,101,96,7)$ = $C_{108}(7,12,29,43)$ and 

~ $\theta_{108,3,8}(C_{108}(5,12,31,41))$ = $\theta_{108,3,8}(C_{108}(5,12,31,41, 67,77,96,103))$ 

\hspace{4.2cm} = $C_{108}(12,53,55,89, 91,17,96,19)$ = $C_{108}(12,17,19,53)$.

$\Rightarrow$ $C_{108}(5,12,31,41)$ $\cong$ $C_{108}(7,12,29,43)$ and $C_{108}(5,12,31,41)$ $\cong$ $C_{108}(12,17,19,53)$.
\\
$Ad_{108}(C_{108}(5,12,31,41))$ 

\hfill = $\{\varphi_{108, x}(C_{108}(5,12,31,41): x = 1,5,7,11,13,17,19,23,25,29,31,35,37,41,43,47,49,53\}$ 

\hspace{1.4cm} = $\{C_{108}(x(5,12,31,41)): x = 1,5,7,11,13,17,19,23,25,29,31,35,37,41,43,47,49,53\}$ 

\hspace{1.4cm}  = $\{C_{108}(5,12,31,41), C_{108}(11,25,47,48), C_{108}(1,24,35,37)$, 

\hfill $C_{108}(17,19,24,53), C_{108}(7,29,43,48), C_{108}(12,13,23,49)\}$.

 This implies, $C_{108}(7,12,29,43),  C_{108}(12,17,19,53) \notin Ad_{108}(C_{108}(5,12,31,41))$. 

Hence, $C_{108}(5,12,31,41)$ and $C_{108}(7,12,29,43)$ as well as  $C_{108}(5,12,31,41)$ and $C_{108}(12,17,19,53)$ are Type-2 isomorphic w.r.t. $m$ = 3.    

(iii) $C_{108}(13(5,18,31,41))$ = $C_{108}(13(5,18,31,41, 67,77,90,103))$ = $C_{54}(65,18,79,101, 7,29,90,43)$

\hspace{3.9cm}  = $C_{108}(7,18,29,43)$ and

\hspace{.6cm}$C_{108}(11(5,18,31,41))$ = $C_{108}(11(5,18,31,41, 67,77,90,103))$ = $C_{54}(55,90,17,19, 89,91,18,53)$

\hspace{3.9cm}  = $C_{108}(17,18,19,53)$.  

This implies, $C_{108}(5,18,31,41)$, $C_{108}(7,18,29,43)$ and $C_{108}(17,18,19,53)$ are Type-1 isomorphic. Also, 
\\
$Ad_{108}(C_{108}(5,18,31,41))$ = $\{C_{108}(5,18,31,41), C_{108}(11,18,25,47), C_{108}(1,18,35,37)$, 

\hfill $C_{108}(17,18,19,53), C_{108}(7,18,29,43), C_{108}(13,18,23,49)\}$.    \hfill $\Box$
 
\begin{prm}\quad \label{p3.20} {\rm Show that the following statements are true.

 (i) $C_{108}(3,4,32,40)$, $C_{108}(3,16,20,52)$ and $C_{108}(3,8,28,44)$ are Type-2 isomorphic w.r.t. $m$ = 3;   

(ii) $C_{108}(4,12,32,40)$, $C_{108}(12,16,20,52)$ and $C_{108}(8,12,28,44)$ are Type-2 isomorphic w.r.t. $m$ = 3; 

(iii) $C_{108}(4,18,32,40)$, $C_{108}(16,18,20,52)$ and $C_{108}(8,18,28,44)$ are Type-1 isomorphic.   }
\end{prm}
\noindent
{\bf Solution.}\quad (i)  Let $R$ = $\{3,4,32,40\}$, $R'$ = $R \cup (108-R)$, $S$ = $\{3,16,20,52\}$, $T$ = $\{3,8,28,44\}$, $n$ = 108, $r$ = 3 and $m$ = 3. This implies, $r$ = $3\in R,S,T$, $\gcd(n, r)$ = $\gcd(108, 3)$ = 3 and $m^3$ = 27 divides $n$ = 108. We have $\theta_{108,3,t}(s)$ = $s+3jt$, $s\in R'$, $0 \leq j \leq 2$ and $0 \leq t \leq 35$. Consider,  
\\
 $\theta_{108,3,4}(C_{108}(3,4,32,40))$ = $\theta_{108,3,4}(C_{108}(3,4,32,40, 68,76,104,105))$ 

\hspace{3.4cm} = $C_{108}(\theta_{108,3,4}(3,4,32,40, 68,76,104,105))$ 

\hspace{3.4cm}  = $C_{108}(3,16,56,52, 92,88,20,105)$ = $C_{108}(3,16,20,52)$ and 
\\
 $\theta_{108,3,8}(C_{108}(3,4,32,40))$ = $\theta_{108,3,8}(C_{108}(3,4,32,40, 68,76,104,105))$ 

\hspace{3.4cm}  = $C_{108}(3,28,80,64, 8,100,44,105)$ = $C_{108}(3,8,28,44)$.

$\Rightarrow$ $C_{108}(3,4,32,40)$ $\cong$ $C_{108}(3,16,20,52)$ and $C_{108}(3,4,32,40)$ $\cong$ $C_{108}(3,8,28,44)$.
\\
$Ad_{108}(C_{108}(3,4,32,40))$ 

\hfill = $\{\varphi_{108, x}(C_{108}(3,4,32,40): x = 1,5,7,11,13,17,19,23,25,29,31,35,37,41,43,47,49,53\}$ 

\hspace{1.6cm} = $\{C_{108}(x(3,4,32,40)): x = 1,5,7,11,13,17,19,23,25,29,31,35,37,41,43,47,49,53\}$ 

\hspace{1.6cm}  = $\{C_{108}(3,4,32,40), C_{108}(15,16,20,52), C_{108}(8,21,28,44)$, 

\hfill $C_{108}(8,28,33,44), C_{108}(16,20,39,52), C_{108}(4,32,40,51)\}$.

 This implies, $C_{108}(3,16,20,52),  C_{108}(3,8,28,44) \notin Ad_{108}(C_{108}(3,4,32,40))$. 

Hence, $C_{108}(3,4,32,40)$ and $C_{108}(3,16,20,52)$ as well as  $C_{108}(3,4,32,40)$ and $C_{108}(3,8,28,44)$ are Type-2 isomorphic w.r.t. $m$ = 3.  
\\
(ii)  $\theta_{108,3,4}(C_{108}(4,12,32,40))$ = $\theta_{108,3,4}(C_{108}(4,12,32,40, 68,76,96,104))$ 

\hspace{4.1cm} = $C_{108}(\theta_{108,3,4}(4,12,32,40, 68,76,96,104))$ 

\hspace{4.1cm}  = $C_{108}(16,12,56,52, 92,88,96,20)$ = $C_{108}(12,16,20,52)$ and 
\\
 $\theta_{108,3,8}(C_{108}(4,12,32,40))$ = $\theta_{108,3,8}(C_{108}(4,12,32,40, 68,76,96,104))$ 

\hspace{3.55cm}  = $C_{108}(28,12,80,64, 8,100,96,44)$ = $C_{108}(8,12,28,44)$.

$\Rightarrow$ $C_{108}(4,12,32,40)$ $\cong$ $C_{108}(12,16,20,52)$ and $C_{108}(4,12,32,40)$ $\cong$ $C_{108}(8,12,28,44)$.
\\
$Ad_{108}(C_{108}(4,12,32,40))$ 

\hfill = $\{\varphi_{108, x}(C_{108}(4,12,32,40): x = 1,5,7,11,13,17,19,23,25,29,31,35,37,41,43,47,49,53\}$ 

\hspace{1.4cm} = $\{C_{108}(x(4,12,32,40)): x = 1,5,7,11,13,17,19,23,25,29,31,35,37,41,43,47,49,53\}$ 

\hspace{1.4cm}  = $\{C_{108}(4,12,32,40), C_{108}(16,20,48,52), C_{108}(8,24,28,44)\}$.

 This implies, $C_{108}(12,16,20,52),  C_{108}(8,12,28,44) \notin Ad_{108}(C_{108}(4,12,32,40))$. 

Hence, $C_{108}(4,12,32,40)$ and $C_{108}(12,16,20,52)$ as well as  $C_{108}(4,12,32,40)$ and $C_{108}(8,12,28,44)$ are Type-2 isomorphic w.r.t. $m$ = 3. 

(iii) $C_{108}(5(4,18,32,40))$ = $C_{108}(5(4,18,32,40, 68,76,90,104))$ = $C_{54}(20,90,52,92, 16,56,18,88)$

\hspace{3.75cm}  = $C_{108}(16,18,20,52)$ and

\hspace{.5cm} $C_{108}(7(4,18,32,40))$ = $C_{108}(7(4,18,32,40, 68,76,90,104))$ = $C_{54}(28,18,8,64, 44,100,90,80)$

\hspace{3.75cm}  = $C_{108}(8,18,28,44)$.  

This implies, $C_{108}(4,18,32,40)$, $C_{108}(16,18,20,52)$ and $C_{108}(8,18,28,44)$ are Type-1 isomorphic.  \hfill $\Box$
 
\section{Results on isomorphic circulant graphs of Type-2 w.r.t. $m$ = 2}

 In this section, we obtain new family of isomorphic circulant graphs of Type-2 w.r.t. $m$ = 2 and thereby new family of circulant graphs without CI-property. The following results on Type-2 isomorphic circulant graphs w.r.t. $m$ = 2 are obtained in \cite{v17,v20} but modified proofs are presented here. 

\begin{theorem} \label{t4.1} {\rm For $n,s \in \mathbb{N}$, $1 \leq 2s-1 \leq 2n-1$, $R$ = $\{2,2s-1,4n-(2s-1)\}$ and $S$ = $\{ 2,2n-(2s-1)$, $2n+2s-1 \}$,  $\theta_{8n,2,n}(C_{8n}(R))$ = $C_{8n}(S)$ = $\theta_{8n,2,3n}(C_{8n}(R))$, $\theta_{8n,2,n}(C_{8n}(S))$ = $C_{8n}(R)$ = $\theta_{8n,2,3n}(C_{8n}(S))$, $\theta_{8n,2,2n}(C_{8n}(R))$ = $C_{8n}(R)$ and $\theta_{8n,2,2n}(C_{8n}(S))$ = $C_{8n}(S)$.  }
\end{theorem}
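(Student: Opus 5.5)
The approach is a direct computation, in the style of Problem \ref{p3.13} and Table 2, using Theorem \ref{t3.8}. By that theorem, $\theta_{8n,2,t}(C_{8n}(R)) = C_{8n}(S)$ holds exactly when $\theta_{8n,2,t}(R\cup(8n-R)) = S\cup(8n-S)$. So it suffices to compute the image of the symmetric jump set
\[
R\cup(8n-R)=\{2,\;8n-2,\;2s-1,\;8n-(2s-1),\;4n-(2s-1),\;4n+2s-1\}
\]
under $\theta_{8n,2,t}(x)=x+2jt$. Here $j\in\{0,1\}$ is the parity of $x$. Note that $m=2$ divides $\gcd(8n,2)$ and $m^3=8$ divides $8n$, so Definition \ref{d3.4} applies.

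The key steps, in order, are as follows.
\begin{enumerate}
\item By property (i) in the proof of Lemma \ref{l3.1} (see also Remark \ref{r3.10}), the even jumps $2$ and $8n-2$ are fixed for every $t$.
\item The four odd jumps are each shifted by $2t$ modulo $8n$.
\item For $t=n$ the shift is $2n$. The four odd jumps go to $2n+2s-1$, $2n-(2s-1)$ (from $8n-(2s-1)+2n \equiv 2n-(2s-1)$), $6n-(2s-1)$ and $6n+2s-1$. Together with $2$ and $8n-2$ these form exactly $S\cup(8n-S)$, since $8n-(2n-(2s-1))=6n+2s-1$ and $8n-(2n+2s-1)=6n-(2s-1)$. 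The bound $1\le 2s-1\le 2n-1$ gives $2n-(2s-1)\ge 1$ and $2n+2s-1\le 4n$, so $S\subseteq[1,4n]$ after reflexive reduction.
\item For $t=3n$ the shift is $6n\equiv -2n$. This permutes the same four residues, giving the same image set.
\item For $t=2n$ the shift is $4n$. This maps $2s-1 \leftrightarrow 4n+2s-1$ and $8n-(2s-1)\leftrightarrow 4n-(2s-1)$, so $R\cup(8n-R)$ is fixed. Hence $\theta_{8n,2,2n}(C_{8n}(R))=C_{8n}(R)$.
\item The statements for $S$ follow by the identical computation on $S\cup(8n-S)$, whose odd elements are $2n\pm(2s-1)$ and $6n\pm(2s-1)$. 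A shift by $\pm 2n$ returns them to $\{\pm(2s-1),\,4n\pm(2s-1)\}$, and a shift by $4n$ fixes the set.
\end{enumerate}
Alternatively, the $S$ statements follow from the composition rule $\theta_{n,m,t}\circ\theta_{n,m,t'}=\theta_{n,m,t+t'}$ (Table 1, item 15) together with $4n\equiv 0 \pmod{4n}$. I would still give the direct check, since that rule is only stated in the table.

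No step is deep. The main obstacle is bookkeeping: reducing the shifted values modulo $8n$, and verifying the ranges so that the reflexive reductions land in $S$ exactly. Degenerate cases are part of this. When $2s-1=2n-1$, we get $2n+2s-1=4n-1$, and no element equals $4n$ because the odd jumps stay odd. When $2n-(2s-1)$ coincides with another element the sets still agree, since we compare images of sets. I would record the computation in a small table analogous to Table 2 for $t=n,2n,3n$.
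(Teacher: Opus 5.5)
Your proposal is correct and matches the paper's proof. The paper also computes $\theta_{8n,2,t}$ directly on the six-element symmetric set $R\cup(8n-R)$ for $t=n,2n,3n$: the even jumps are fixed, the odd jumps are shifted by $2t$ modulo $8n$, and the images are read off as $S\cup(8n-S)$ or $R\cup(8n-R)$; the paper then dismisses the $S$-statements with ``similarly, it is easy to show'', and your explicit shift-by-$\pm 2n$ and shift-by-$4n$ check on the odd elements of $S\cup(8n-S)$ correctly fills in that step.
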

\begin{proof}\quad Using the definition of $\theta_{n,m,t},$ we get, 

$\theta_{8n,2,n}(R \cup (8n-R))$ = $\theta_{8n,2,n}(2, 2s-1, 4n-(2s-1), 4n+(2s-1), 8n-(2s-1), 8n-2)$ 

\hspace{1.8cm} = $\{2, 2n+2s-1, 6n-(2s-1), 6n+(2s-1), 2n-(2s-1), 8n-2\}$ 

\hfill = $\{2, 2n-(2s-1), 2n+2s-1, 6n-(2s-1), 6n+(2s-1), 8n-2\}$ = $\{S \cup (8n-S)\}$; 

$\theta_{8n,2,2n}(R \cup (8n-R))$ = $\theta_{8n,2,2n}(2, 2s-1, 4n-(2s-1), 4n+(2s-1), 8n-(2s-1), 8n-2)$ 

\hspace{2.15cm} = $\{2, 4n+2s-1, 8n-(2s-1), 2s-1, 4n-(2s-1), 8n-2\}$ 

\hfill = $\{2, 2s-1, 4n-(2s-1), 4n+2s-1, 8n-(2s-1), 8n-2\}$ = $\{R \cup (8n-R)\}$; and

$\theta_{8n,2,3n}(R \cup (8n-R))$ = $\theta_{8n,2,3n}(2, 2s-1, 4n-(2s-1), 4n+(2s-1), 8n-(2s-1), 8n-2)$ 

\hfill = $\{2, 6n+2s-1, 2n-(2s-1), 2n+(2s-1), 6n-(2s-1), 8n-2\}$ = $\{S \cup (8n-S)\}$. 

$\Rightarrow$  $\theta_{8n,2,n}(C_{8n}(R))$ = $C_{8n}(S)$ = $\theta_{8n,2,3n}(C_{8n}(R))$ and $\theta_{8n,2,2n}(C_{8n}(R))$ = $C_{8n}(R)$. 

Similarly, it is easy to show that $\theta_{8n,2,n}(C_{8n}(S))$ = $C_{8n}(R)$ = $\theta_{8n,2,3n}(C_{8n}(S))$ and $\theta_{8n,2,2n}(C_{8n}(S))$ = $C_{8n}(S)$ where $R$ = $\{2,2s-1, 4n-(2s-1)\}$, $S$ = $\{2, 2n-(2s-1), 2n+2s-1\}$, $1 \leq 2s-1 \leq 2n-1$ and $n,s \in \mathbb{N}$. Hence the result. 
\end{proof}

\begin{theorem} \quad \label{t4.2} {\rm For $n \geq 2$, $1 \leq 2s-1 \leq 2n-1$, $n \neq 2s-1$, $R$ = $\{2,2s-1, 4n-(2s-1)\}$ and $S$ = $\{ 2$, $2n-(2s-1)$, $2n+2s-1 \}$, $\theta_{8n,2,n}(C_{8n}(R))$ = $C_{8n}(S)$ = $\theta_{8n,2,3n}(C_{8n}(R))$, $\theta_{8n,2,n}(C_{8n}(S))$ = $C_{8n}(R)$ = $\theta_{8n,2,3n}(C_{8n}(S))$, $\theta_{8n,2,2n}(C_{8n}(R))$ = $C_{8n}(R)$, $\theta_{8n,2,2n}(C_{8n}(S))$ = $C_{8n}(S)$ and circulant graphs $C_{8n}(R)$ and $C_{8n}(S)$ are Type-2 isomorphic  w.r.t. $m$ = 2. When $n$ = $2s-1$, the two circulant graphs are the same. }
\end{theorem}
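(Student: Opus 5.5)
The transformation identities in the statement are exactly those of Theorem \ref{t4.1}, so I would quote that result for them. What remains is to check the hypotheses of Definition \ref{d3.4}, to show $S \neq yR$ for every $y\in\varphi_{8n}$, and to treat the degenerate case $n = 2s-1$.

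\textbf{Hypotheses.} Take $r = 2\in R\cap S$ and $m=2$. Then $\gcd(8n,2)=2=m$ and $m^3=8$ divides $8n$. Also $|R|=|S|=3$ once we check that the three jump sizes are distinct and lie in $[1,4n]$. Indeed, $2s-1\le 2n-1<4n-(2s-1)$, and both odd sizes differ from $2$. For $S$, we have $2n-(2s-1)\ge 1$ and $2n+2s-1\le 4n-1$.

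\textbf{Degenerate case.} If $n=2s-1$, then $2n-(2s-1)=2s-1$ and $2n+2s-1=3n=4n-(2s-1)$, so $S=R$. This gives the last sentence of the statement. From now on assume $n\ne 2s-1$. Then $2s-1\neq 2n-(2s-1)$ (otherwise $n=2s-1$) and $2s-1\ne 2n+2s-1$, so $R\ne S$ as sets.

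\textbf{Main step: excluding Type-1 isomorphism.} Suppose $yR=S$ under reflexive modulo $8n$ with $y$ odd and coprime to $n$. The element $2$ is the only even element of either set, so $2y\equiv\pm 2 \pmod{8n}$. Hence $y\equiv\pm1\pmod{4n}$, and modulo $8n$ this means $y\in\{1,\,4n-1,\,4n+1,\,8n-1\}$.

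Since reflection absorbs the sign, it suffices to consider $y=1$ and $y=4n+1$.
\begin{itemize}
\item For $y=1$ we get $yR=R\ne S$.
\item For $y=4n+1$ and any odd $a$, we have $(4n+1)a = a+4na\equiv a+4n \pmod{8n}$. So $2s-1$ goes to $4n+2s-1$, which reflects to $4n-(2s-1)$. Likewise $4n-(2s-1)$ goes to $8n-(2s-1)$, which reflects to $2s-1$. Thus $yR=R$ again.
\end{itemize}
The case $y=4n-1$ equals $-(4n+1)$ modulo $8n$, and $y=8n-1$ equals $-1$, so both reduce to the two cases above. Therefore $T1_{8n}(C_{8n}(R))$ consists of $C_{8n}(R)$ alone, and $S\notin\{yR\}$ because $S\neq R$.

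This last step is the only non-routine part: one has to notice that the even element $2$ pins the multiplier down to $\pm1$ or $\pm(4n+1)$. Combining it with Theorem \ref{t4.1} (take $t=n$, which satisfies $1\le n\le 4n-1$) shows that $C_{8n}(R)$ and $C_{8n}(S)$ are Type-2 isomorphic w.r.t. $m=2$, as in Remark \ref{r3.9}.
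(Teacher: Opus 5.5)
Your proof is correct and is essentially the paper's. Theorem \ref{t4.1} supplies the transformation identities, and the fact that $2$ is the only even jump in $R$ and $S$ forces any multiplier $y$ with $yR=S$ to be $\pm1$ or $\pm(4n+1)$ modulo $8n$ (the paper's Cases (i)--(ii)); each of these returns $R$ itself (the paper's Table 3), which contradicts $R\neq S$.

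One sentence needs fixing. Your parity argument only restricts multipliers with $yR=S$, so it does not give $T1_{8n}(C_{8n}(R))=\{C_{8n}(R)\}$. That claim is in fact false: for $R=\{1,2,7\}$ and $8n=16$ one has $3R=\{3,5,6\}$ (cf.\ Problem \ref{p3.13}). The conclusion $S\neq yR$ for all $y\in\varphi_{8n}$ follows directly from that restriction, without the singleton claim.
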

\begin{proof}\quad When $R$ = $\{ 2,2s-1,4n-(2s-1)\}$ and $S$ = $\{ 2,2n-(2s-1),2n+2s-1 \}$, $\theta_{8n,2,n}(C_{8n}(R))$ = $C_{8n}(S)$ = $\theta_{8n,2,3n}(C_{8n}(R))$, $\theta_{8n,2,n}(C_{8n}(S))$ = $C_{8n}(R)$ = $\theta_{8n,2,3n}(C_{8n}(S))$, $\theta_{8n,2,2n}(C_{8n}(R))$ = $C_{8n}(R)$ and $\theta_{8n,2,2n}(C_{8n}(S))$ = $C_{8n}(S)$ using Theorem \ref{t4.1} where $1 \leq 2s-1 \leq 2n-1$ and $n,s \in \mathbb{N}$. This implies, $C_{8n}(R)$ $\cong$ $C_{8n}(S)$. The two circulant graphs $C_{8n}(R)$ and $C_{8n}(S)$ are the same when $2s-1$ = $2n-(2s-1)$ and $4n-(2s-1)$ = $2n+2s-1$, $1 \leq 2s-1 \leq 2n-1$. That is when $n$ = $2s-1$, the two circulant graphs become $C_{8n}(2,n,3n)$, $n \in \mathbb{N}$. When $n$ = $s$ = 1, the two graphs become $C_8(1,2,3)$. For $n \neq 2s-1$, $n,s\in \mathbb{N}$ and $n \geq 2$, the set of jump sizes of the two isomorphic circulant graphs $C_{8n}(R)$ and $C_{8n}(S)$ are different and thereby the two isomorphic circulant graphs are not same.   \\

\noindent
{\it \bf {Claim:}} $C_{8n}(R)$ and $C_{8n}(S)$ are of Type-2 isomorphic w.r.t. $m$ = 2 when $R$ = $\{2,2s-1,4n-(2s-1)\}$, $S$ = $\{2,2n-(2s-1),2n+2s-1\}$, $n \neq 2s-1$, $n \geq 2$ and $n,s \in \mathbb{N}$.
	
If not, graphs $C_{8n}(R)$ and $C_{8n}(S)$ are Type-1 isomorphic. Then, there exists $s' \in \mathbb{N}$ such that $\gcd(8n,2s'-1)$ = 1 and $C_{8n}((2s'-1)R)$ = $C_{8n}(S)$ which implies, 

$(2s'-1)\{ 2,2s-1,4n-(2s-1),4n+2s-1$, $8n-(2s-1), 8n-2 \}$ 

\hfill = $\{ 2,2n-(2s-1)$, $2n+2s-1, 6n-(2s-1), 6n+2s-1$, $8n-2 \}$, 
\\
using arithmetic modulo $8n$, $1 \leq 2s'-1 \leq 8n-1$. Now, $2(2s'-1)$, $(8n-2)(2s'-1), 2+8np_1$ and $8n-2+8np_2$ are the only even numbers in the two sets of the above relation for some $p_1, p_2 \in \mathbb{N}_0$. Then the following two cases arise.  
	
\noindent
{\bf Case (i).} $2(2s'-1)$ = $2+8np_1$, $1 \leq 2s'-1 \leq 8n-1$ and $p_1 \in \mathbb{N}_0$.  
	
In this case, the possible values of $p_1$ are 0 and 1 since $1 \leq 2s'-1 \leq 8n-1$ and $s',n \in \mathbb{N}$. When $p_1$ = 0, $2s'-1$ = 1 and so we get the same graph only. When $p_1$ = 1, $2s'-1$ = $4n+1$. The jump sizes of the circulant graph corresponding to Type-1 isomorphism when $2s'-1$ = $4n+1$ are given in Table 3. In this case, the two graphs are the same. See Table 3. 
 
\noindent
\textbf{Case (ii).} $2(2s'-1)$ = $8n-2+8np_2$, $1 \leq 2s'-1 \leq 8n-1$ and $p_2 \in \mathbb{N}_0$.  
	
	In this case, the possible values of $p_2$ are $0$ and $1.$ When $p_2$ = $0$, $2s'-1$ = $4n-1$. When $p_2$ = 1, $2s'-1$ = $8n-1$. The jump sizes of the circulant graph corresponding to Type-1 isomorphism when $2s'-1$ = $4n-1$ as well as when $2s'-1$ = $8n-(4n-1)$ = $4n+1$ are given in Table 3. In the case $2s'-1$ = $4n-1$ as well as when $2s'-1$ = $8n-1$ (which is same as when $2s'-1$ = $8n-(8n-1)$ = 1), we get the same circulant graph $C_{8n}(R)$ only. See Table 3.
	
	Thus, the isomorphic circulant graphs $C_{8n}(R)$ and $C_{8n}(S)$ for $R$ = $\{ 2, 2s-1, 4n-(2s-1) \}$, $S$ = $\{ 2$, $2n-(2s-1)$, $2n+2s-1 \}$, $1 \leq 2s-1 \leq 2n-1$, $n \geq 2$  and  $n \neq 2s-1$ (and thereby $R \neq S$, $2n-(2s-1) \neq 2s-1$ and $2n+2s-1 \neq 4n-(2s-1)$, $1 \leq 2s-1 \leq 2n-1$) are different, not of Type-1 and $\theta_{8n,2,n}(C_{8n}(R))$ = $C_{8n}(S)$. And hence the result follows. 

	\begin{table}
		\caption{{\small Calculation of $s(2s'-1)$ under arithmetic modulo $8n.$}}
		\begin{center}
			\scalebox{0.8}{
				\begin{tabular}{||c||*{5}{c|}c||}\hline \hline
					\backslashbox{Multiplier \\ $2s'-1$}{Jump size ~ $s$}
					& 2 & $2s-1$ & $4n-(2s-1)$ & $4n+2s-1$ & $8n-(2s-1)$ & $8n-2$\\\hline \hline
					& &  &   &  &  &  \\
					$4n-1$ & $8n-2$ & $4n-(2s-1)$ & $2s-1$ & $8n-(2s-1)$ & $4n+2s-1$ & 2  \\\hline
					& &  &   &  &  &  \\
					$4n+1$ & 2 & $4n+2s-1$ & $8n-(2s-1)$ & $2s-1$ & $4n-(2s-1)$ & $8n-2$ \\\hline
					& &  &   &  &  &  \\
					$8n-1$ & $8n-2$ & $8n-(2s-1)$ & $4n+2s-1$ & $4n-(2s-1)$ & $2s-1$ & 2  \\\hline \hline 
			\end{tabular}}
		\end{center}
	\end{table}
\end{proof}

\begin{theorem} \label{t4.3} {\rm Let $n \geq 2$, $k \geq 3$, $1 \leq 2s-1 \leq 2n-1$, $n \neq 2s-1$, $R$ = $\{ 2s-1,$ $4n-(2s-1),$ $2p_1,$ $2p_2,$ $\dots,$ $2p_{k-2} \}$, $S$ = $\{2n-(2s-1),$ $2n+2s-1,$ $2p_1,2p_2,\dots,2p_{k-2}\}$, $2y\in R,S$, $\gcd(4n,y)$ = 1, $p_1,p_2,\dots,p_{k-2} \in \mathbb{N}$ and $\gcd(p_1,p_2,\dots,p_{k-2})$ = 1. Then, (i) $\theta_{8n,2,n}(C_{8n}(R))$ = $C_{8n}(S)$ = $\theta_{8n,2,3n}(C_{8n}(R))$, $\theta_{8n,2,n}(C_{8n}(S))$ = $C_{8n}(R)$ = $\theta_{8n,2,3n}(C_{8n}(S))$, $\theta_{8n,2,2n}(C_{8n}(R))$ = $C_{8n}(R)$, $\theta_{8n,2,2n}($ $C_{8n}(S))$ = $C_{8n}(S)$ and (ii) for given values of $n,s,p_1,p_2,\dots,p_{k-2}$ and $y$, $C_{8n}(R)$ and $C_{8n}(S)$ are isomorphic of either Type-1 or Type-2 w.r.t. $m$ = 2. Moreover, for given $n, s$ and $k$ and for all such possible values of $p_1,p_2,\dots,p_{k-2}$ and $y$, the set $\{ C_n(S)$ = $\theta_{8n,2,n}(C_{8n}(R)):$  $\gcd(p_1,p_2,\dots,p_{k-2})$ = 1, $p_1,p_2,\dots,p_{k-2} \in \mathbb{N}\}$ contains all isomorphic circulant graphs of $C_{8n}(R)$ of Type-2 w.r.t. $m$ = 2. }
\end{theorem}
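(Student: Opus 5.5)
Part (i) is a direct extension of the computation in Theorem \ref{t4.1}. By Remark \ref{r3.10}, every even jump size is fixed by $\theta_{8n,2,t}$: if $x=2q$ then $j=0$ and $\theta_{8n,2,t}(x)=x$. So all elements $2p_i$ and $8n-2p_i$ of $R\cup(8n-R)$ are left unchanged by $\theta_{8n,2,n}$, $\theta_{8n,2,2n}$ and $\theta_{8n,2,3n}$. The odd elements of $R\cup(8n-R)$ are $2s-1,\ 4n-(2s-1),\ 4n+2s-1,\ 8n-(2s-1)$, and each has $j=1$. Hence $\theta_{8n,2,t}$ adds $2t$ to each of them modulo $8n$, which is exactly the table already written out in the proof of Theorem \ref{t4.1}. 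For $t=n$ and $t=3n$ these four elements go to the odd part of $S\cup(8n-S)$, and for $t=2n$ they are permuted among themselves. Thus $\theta_{8n,2,t}(R\cup(8n-R))$ equals $S\cup(8n-S)$ or $R\cup(8n-R)$ as stated. Theorem \ref{t3.8} then gives $\theta_{8n,2,n}(C_{8n}(R))=C_{8n}(S)$ and the other equalities, and the statements for $S$ follow by symmetry. The hypothesis $2y\in R$ with $\gcd(4n,y)=1$ guarantees that $\gcd(8n,2y)=2$, so $m=2$ is a divisor of $\gcd(8n,r)$ for some $r\in R$ and $m^3=8\mid 8n$. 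Hence Definition \ref{d3.4} applies, and $\theta_{8n,2,t}$ is a bijection by Lemma \ref{l3.1}.

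For part (ii), Theorem \ref{t3.3} gives $C_{8n}(R)\cong C_{8n}(S)$, and $R\neq S$ because $n\neq 2s-1$, exactly as argued in Theorem \ref{t4.2}. The dichotomy then follows from the definitions: either $S=xR$ for some $x\in\varphi_{8n}$, in which case the graphs are Type-1 isomorphic, or no such $x$ exists, in which case they are Type-2 isomorphic w.r.t. $m=2$ by Definition \ref{d3.4}. I would add a remark showing that both cases occur. Problem \ref{p3.17}(ii)--(iii) is the model: the choice $2p_1=6$ gives Type-2 while $2p_1=4$ gives Type-1. This shows that, unlike in Theorem \ref{t4.2}, the Type-1 alternative cannot be ruled out in general. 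The reason is that a multiplier $x$ may now permute the even jumps $2p_i$ nontrivially, whereas in Theorem \ref{t4.2} the single even jump $2$ forced $x\in\{\pm1,4n\pm1\}$.

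The ``moreover'' part is the main obstacle. I would argue that for $m=2$ every Type-2 image of $C_{8n}(R)$ is $\theta_{8n,2,t}(C_{8n}(R))$ for some $t$. Since the even part of $R$ is fixed, it suffices to find all $t$ for which the shifted odd set $\{2s-1+2t,\ 4n-(2s-1)+2t,\ 4n+2s-1+2t,\ 8n-(2s-1)+2t\}$ is closed under $x\mapsto 8n-x$. Pairing elements and using $1\le 2s-1\le 2n-1$, I would show that this closure forces $4t\equiv 0\pmod{8n}$ or $4t\equiv 4n\pmod{8n}$, that is, $t\in\{0,n,2n,3n\}$ modulo $4n$. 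This is the step needing careful case analysis of which element pairs with which, and I expect it to be the main obstacle.

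With that done, the possible images are only $C_{8n}(R)$ and $C_{8n}(S)$. Letting $p_1,\dots,p_{k-2}$ and $y$ range over all admissible values therefore produces every Type-2 isomorphic graph of $C_{8n}(R)$ w.r.t. $m=2$, namely $C_{8n}(S)=\theta_{8n,2,n}(C_{8n}(R))$ whenever it is not of Type-1.
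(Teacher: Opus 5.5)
Your proposal is correct. For (i) and (ii) it follows the paper's own argument. The paper likewise combines the computation of Theorem \ref{t4.1} with the fact that even jumps are fixed by $\theta_{8n,2,t}$; the remark it cites for this is evidently Remark \ref{r3.10}. It then reads the Type-1/Type-2 dichotomy directly off Definition \ref{d3.4}.

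You take a genuinely different route for the ``moreover'' clause. The paper only asserts it. You actually prove it, for each fixed $R$, by showing that $\theta_{8n,2,t}(C_{8n}(R))$ is circulant only for $t\in\{0,n,2n,3n\}$. This gives $T2_{8n,2}(C_{8n}(R))\subseteq\{C_{8n}(R),C_{8n}(S)\}$, which is the natural reading of the clause. The step you flag as the main obstacle is short:
\begin{itemize}
\item Put $a=2s-1$ and $O=\{\pm a,\,4n\pm a\}$. Since $-O=O$, closure of $O+2t$ under negation is equivalent to $O+4t=O$.
\item In particular $a+4t\in O$.
\item The option $a+4t\equiv -a \pmod{8n}$ would force $2t\equiv -a\pmod{4n}$, and $a+4t\equiv 4n-a$ would force $2t\equiv 2n-a\pmod{4n}$. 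Both are impossible because $a$ is odd.
\item Hence $4t\equiv 0$ or $4n \pmod{8n}$. Conversely, $O+4n=O$.
\end{itemize}
Only the parity of $a$ is used here, not the bounds on $2s-1$ and not even $n\neq 2s-1$.

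One correction to your side remark, which does not affect the proof. The examples of Problem \ref{p3.17}(ii),(iii) do not satisfy the hypothesis $\gcd(4n,y)=1$: there $4n=24$, and the only even jump is $6$, respectively $4$.

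In fact, for $k=3$ this hypothesis rules out the Type-1 alternative entirely:
\begin{itemize}
\item The unique even jump must be $2p_1=2y$, so $\gcd(p_1,4n)=1$.
\item Any unit $x$ with $xR=S$ must map $\{\pm 2p_1\}$ to itself, which forces $x\equiv\pm1\pmod{4n}$.
\item Exactly as in Theorem \ref{t4.2}, all such multipliers fix $R$.
\end{itemize}

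The Type-1 alternative does occur for $k\geq 4$. For example, take $n=6$, $s=1$, $p_1=1$, $p_2=11$, $y=1$. Then $R=\{1,2,22,23\}$ and $S=\{2,11,13,22\}$ satisfy all the hypotheses, and $11R=S$ modulo $48$.
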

\begin{proof}\quad When $R$ = $\{2,2s-1,4n-(2s-1)\},$ $S$ = $\{ 2, 2n-(2s-1), 2n+2s-1 \},$ $n \neq 2s-1$, $n \geq 2$ and $n \in \mathbb{N}$, using Theorem \ref{t4.1}, $\theta_{8n,2,n}(C_{8n}(R))$ = $C_{8n}(S)$ = $\theta_{8n,2,3n}(C_{8n}(R))$, $\theta_{8n,2,n}(C_{8n}(S))$ = $C_{8n}(R)$ = $\theta_{8n,2,3n}(C_{8n}(S))$, $\theta_{8n,2,2n}(C_{8n}(R))$ = $C_{8n}(R)$, $\theta_{8n,2,2n}(C_{8n}(S))$ = $C_{8n}(S)$ and circulant graphs $C_{8n}(R)$ and $C_{8n}(S)$ are Type-2 isomorphic  w.r.t. $m$ = 2. And using Remark \ref{r3.7}, for a given set of values of $n,s,p_1,p_2,\dots,p_{k-2}$ and $y$,   $\theta_{8n,2,n}(C_{8n}(R))$ = $C_{8n}(S)$ = $\theta_{8n,2,3n}(C_{8n}(R))$, $\theta_{8n,2,n}(C_{8n}(S))$ = $C_{8n}(R)$ = $\theta_{8n,2,3n}(C_{8n}(S))$, $\theta_{8n,2,2n}(C_{8n}(R))$ = $C_{8n}(R)$ and $\theta_{8n,2,2n}(C_{8n}(S))$ = $C_{8n}(S)$ and thereby $C_{8n}(R)$ and $C_{8n}(S)$ are isomorphic. Moreover, $C_{8n}(R)$ and $C_{8n}(S)$ are either Type-1 isomorphic or Type-2 isomorphic w.r.t. $m$ = 2 when $R$ = $\{ 2s-1$, $4n-2s+1$, $2p_1$, $2p_2$, $\dots$, $2p_{k-2} \}$, $S$ = $\{2n-(2s-1),2n+2s-1, 2p_1,2p_2,\dots,2p_{k-2}\}$, $n \geq 2$, $k \geq 3$, $1 \leq 2s-1 \leq 2n-1$, $n \neq 2s-1$, $\gcd(8n, 2y)$ = $m$ = 2,  $\gcd(p_1,p_2,\dots,p_{k-2})$ = 1, $2y\in R,S$ and $n,s, y,p_1,p_2,\dots,p_{k-2} \in \mathbb{N}$.
	
Also, for given $n, s$ and $k$ and for all such possible values of $p_1,p_2,\dots,p_{k-2}$ and $y$, the set $\{ C_n(S)$ = $\theta_{8n,2,n}(C_{8n}(R)):$ $p_1,p_2,\dots,p_{k-2} \in \mathbb{N}$  and $\gcd(p_1,p_2,\dots,p_{k-2}) = 1\}$ contains all possible isomorphic circulant graphs of $C_{8n}(R)$ of Type-2 w.r.t. $m$ = 2.  Hence we get the result. 
\end{proof}

When the order of the circulant graph is $2^n$, corresponding to the above results, we obtain the following, $n \in \mathbb{N}$ and $2^n \geq 16$.

\begin{cor}{\rm \label{t4.4} For $n \geq 4$, $1 \leq 2s-1 \leq 2^{n-2}-1$, $R$ = $\{ 2, 2s-1, 2^{n-1}-(2s-1) \}$ and $S$ = $\{2, 2^{n-2}-(2s-1), 2^{n-2}+2s-1 \}$, $\theta_{2^n,2,2^{n-3}}(C_{2^n}(R))$ = $C_{2^n}(S)$ = $\theta_{2^n,2,3\times 2^{n-3}}(C_{2^n}(R))$, $\theta_{2^n,2,2^{n-3}}(C_{2^n}(S))$ = $C_{2^n}(R)$ = $\theta_{2^n,2,3\times 2^{n-3}}(C_{2^n}(S))$, $\theta_{2^n,2,2^{n-2}}(C_{2^n}(R))$ = $C_{2^n}(R)$, $\theta_{2^n,2,2^{n-2}}(C_{2^n}(S))$ = $C_{2^n}(S)$ and $C_{2^n}(R)$ and $C_{2^n}(S)$ are Type-2 isomorphic circulant graphs w.r.t. $m$ = 2. The two graphs are the same when $n$ = 3. }
\end{cor}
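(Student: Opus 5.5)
The plan is to obtain Corollary \ref{t4.4} as a direct specialization of Theorems \ref{t4.1} and \ref{t4.2}. We substitute $2^{n-3}$ for the parameter called $n$ in those theorems. To avoid a clash of letters, write $N = 2^{n-3}$. Then the order $8N$ equals $2^n$, and $n \geq 4$ gives $N \geq 2$.

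Under this substitution the data of Theorem \ref{t4.1} become
\[ R = \{2,\ 2s-1,\ 4N-(2s-1)\} = \{2,\ 2s-1,\ 2^{n-1}-(2s-1)\}, \]
\[ S = \{2,\ 2N-(2s-1),\ 2N+2s-1\} = \{2,\ 2^{n-2}-(2s-1),\ 2^{n-2}+2s-1\}. \]
The parameter range $1 \leq 2s-1 \leq 2N-1$ becomes exactly $1 \leq 2s-1 \leq 2^{n-2}-1$. The transformations $\theta_{8N,2,N}$, $\theta_{8N,2,2N}$ and $\theta_{8N,2,3N}$ become $\theta_{2^n,2,2^{n-3}}$, $\theta_{2^n,2,2^{n-2}}$ and $\theta_{2^n,2,3\times 2^{n-3}}$. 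So the identities $\theta_{2^n,2,2^{n-3}}(C_{2^n}(R)) = C_{2^n}(S) = \theta_{2^n,2,3\times 2^{n-3}}(C_{2^n}(R))$, the symmetric ones for $S$, and the fixed-point identities under $\theta_{2^n,2,2^{n-2}}$ are read off verbatim from Theorem \ref{t4.1}. The hypotheses of Definition \ref{d3.4} with $m=2$ also hold: $2 \in R$, $\gcd(2^n,2)=2$, $m^3 = 8$ divides $2^n$ because $n \geq 3$, and $|R| = 3$.

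For the Type-2 claim we invoke Theorem \ref{t4.2} with $N \geq 2$. Its only extra hypothesis is $N \neq 2s-1$, and this is where the power-of-two order does the work. Since $N = 2^{n-3}$ is even for $n \geq 4$ while $2s-1$ is odd, we always have $N \neq 2s-1$. Theorem \ref{t4.2} therefore gives $R \neq S$ and shows that no multiplier $x \in \varphi_{2^n}$ carries $R$ to $S$ (its case analysis on the even jump sizes, via Table 3, rules out all Type-1 isomorphisms). Hence $C_{2^n}(R)$ and $C_{2^n}(S)$ are Type-2 isomorphic w.r.t. $m = 2$.

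For the final sentence, take $n = 3$, so $N = 1$ and the range forces $2s-1 = 1 = N$. This is precisely the exceptional case of Theorem \ref{t4.2}, where both graphs equal $C_8(1,2,3)$. The only real obstacle is bookkeeping: keeping the corollary's $n$ separate from the $n$ of the theorems, and checking that the parity observation $N \neq 2s-1$ covers every admissible $s$. No new computation beyond the cited theorems is needed.
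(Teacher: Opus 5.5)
Your proposal is correct and follows the paper's proof: the paper also substitutes $2^{n-3}$ for the parameter $n$ of Theorem \ref{t4.2} and observes that the exceptional coincidence $2s-1 = 2^{n-3}$ forces $n=3$. Your parity argument (odd $2s-1$ versus even $2^{n-3}$ for $n \geq 4$) is that same observation, and your bookkeeping via $N = 2^{n-3}$ makes the substitution cleaner.
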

\begin{proof} The two circulant graphs are the same when $2s-1$ = $2^{n-2}-(2s-1)$ which implies, $2s-1$ = $2^{n-3}$. This implies, $n-3$ = 0 (and $2s-1$ = 1). That is when $n$ = 3 (and $s$ = 1). The remaining part of the proof follows from Theorem  \ref{t4.2}. 
\end{proof}

\begin{cor}{\rm \label{t4.5} For $n \geq 4$, $k \geq 3$, $1 \leq 2s-1 \leq 2^{n-2}-1$, $R$ = $\{2s-1,2^{n-1}-(2s-1),2p_1,2p_2,\dots, 2p_{k-2}\}$ and $S$ = $\{2^{n-2}-(2s-1),2^{n-2}+2s-1,2p_1,2p_2,\dots, 2p_{k-2}\}$,  $C_{2^n}(R)$ and $C_{2^n}(S)$ are isomorphic of either Type-1 or Type-2 w.r.t. $m$ = 2 where $2y\in R,S$, $\gcd(2^{n}, 2y)$ = 2 = $m$, $\gcd(p_1,p_2,\dots,p_{k-2})$ = 1 and $n,s,p_1,p_2,\dots,p_{k-2} \in \mathbb{N}$. Moreover, for given $n, s$ and $k$ and for all such possible values of $p_1,p_2,\dots,p_{k-2}$ and $y$, the set $\{ C_{2^n}(S)$ = $\theta_{{2^n},2,{2^{n-3}}}(C_{2^n}(R)):$ $p_1,p_2,\dots,p_{k-2} \in \mathbb{N}\}$ contains all possible isomorphic circulant graphs of $C_{2^n}(R)$ of Type-2 w.r.t. $m$ = 2. }
\end{cor}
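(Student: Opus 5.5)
The plan is to obtain Corollary \ref{t4.5} as the specialization of Theorem \ref{t4.3} to orders that are powers of $2$, in the same way Corollary \ref{t4.4} was obtained from Theorem \ref{t4.2}.

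Write $2^n = 8N$ with $N = 2^{n-3}$. Since $n \geq 4$, we have $N \geq 2$. Under this substitution the parameters of Theorem \ref{t4.3} become
\[
4N = 2^{n-1}, \qquad 2N = 2^{n-2}, \qquad 2N - 1 = 2^{n-2} - 1 .
\]
Hence the range $1 \leq 2s-1 \leq 2^{n-2}-1$ is exactly $1 \leq 2s-1 \leq 2N-1$. The sets $R$ and $S$ of the corollary are then literally the sets $R$ and $S$ of Theorem \ref{t4.3} with $n$ replaced by $N$. Likewise $\theta_{2^n,2,2^{n-3}} = \theta_{8N,2,N}$ and $\theta_{2^n,2,3\cdot 2^{n-3}} = \theta_{8N,2,3N}$.

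Next I check the hypotheses of Theorem \ref{t4.3}. The condition $N \neq 2s-1$ holds because $N = 2^{n-3}$ is even for $n \geq 4$, while $2s-1$ is odd. This is exactly why the degenerate case $n = 3$ of Corollary \ref{t4.4} is excluded here. For the element $2y \in R,S$, the condition $\gcd(4N,y) = 1$ says $\gcd(2^{n-1},y) = 1$, i.e. $y$ is odd, which is the same as $\gcd(2^n,2y) = 2 = m$. The requirement that $m^3 = 8$ divide the order $2^n$ holds since $n \geq 4$. The condition $\gcd(p_1,\dots,p_{k-2}) = 1$ is carried over verbatim.

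With the hypotheses matched, part (ii) of Theorem \ref{t4.3} gives that $C_{2^n}(R)$ and $C_{2^n}(S)$ are isomorphic of either Type-1 or Type-2 w.r.t. $m = 2$. The "moreover" clause also transfers directly: for fixed $n$, $s$, $k$, the set of graphs $\theta_{8N,2,N}(C_{8N}(R))$ is the set $\theta_{2^n,2,2^{n-3}}(C_{2^n}(R))$, and it contains every Type-2 isomorphic image of $C_{2^n}(R)$ w.r.t. $m = 2$.

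I do not expect a real obstacle, since everything reduces to bookkeeping with $N = 2^{n-3}$. The only points needing care are the parity argument for $N \neq 2s-1$ and the translation of the coprimality condition on $y$.
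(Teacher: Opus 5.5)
Your proposal is correct and follows the paper's route exactly: the paper's proof is the one-line remark that the corollary follows from Theorem~\ref{t4.3}, and your substitution $2^n = 8N$ with $N = 2^{n-3} \geq 2$ makes that specialization explicit. Your parity argument for $N \neq 2s-1$ and your translation of $\gcd(4N,y)=1$ into $\gcd(2^n,2y)=2$ supply the bookkeeping the paper leaves implicit.
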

\begin{proof} The proof follows from Theorem \ref{t4.3}. 
\end{proof}

\begin{cor} \label{t4.6} {\rm For $n \geq 4$, $k \geq 3$, $1 \leq 2s-1 \leq 2^{n-2}-1$, $R$ = $\{2s-1,2^{n-1}-(2s-1),2p_1,2p_2,\dots, 2p_{k-2}\}$, $S$ = $\{2^{n-2}-(2s-1),2^{n-2}+2s-1$, $2p_1,2p_2,\dots, 2p_{k-2}\}$ and for a given set of values of $n,s,k,p_1,p_2,\dots,p_{k-2}$ and $y$, $C_{2^n}(R)$ and $C_{2^n}(S)$ are either Type-1 isomorphic or  Type-2 isomorphic w.r.t. $m$ = 2 where $2y\in R,S$, $\gcd(2^{n}, 2y)$ = 2 = $m$, $\gcd(p_1,p_2,\dots,p_{k-2})$ = 1 and $k,n,s,y,p_1,p_2,\dots,p_{k-2} \in \mathbb{N}$. \hfill $\Box$   }
\end{cor}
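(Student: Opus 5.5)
The plan is to specialise Theorem \ref{t4.3} (equivalently Corollary \ref{t4.5}) to $N = 2^n$, where the paper's $n$ is replaced by $2^{n-3}$. Writing $2^n = 8\cdot 2^{n-3}$ and setting $n' = 2^{n-3}$, the hypothesis $n\geq 4$ gives $n'\geq 2$. The range $1\leq 2s-1\leq 2^{n-2}-1$ becomes $1\leq 2s-1\leq 2n'-1$. The sets $R$ and $S$ of the statement are exactly the sets of Theorem \ref{t4.3} with $n'$ in place of $n$, namely
\[
R=\{2s-1,4n'-(2s-1),2p_1,\dots,2p_{k-2}\},\qquad S=\{2n'-(2s-1),2n'+2s-1,2p_1,\dots,2p_{k-2}\}.
$$
Also $m=2$ and $m^3=8$ divides $2^n$ because $n\geq 4$ (indeed $n\geq 3$ already suffices). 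The condition $\gcd(2^n,2y)=2$ says $y$ is odd, which is the same as $\gcd(4n',y)=1$ since $4n'$ is a power of $2$. So every hypothesis of Theorem \ref{t4.3} is met.

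Next I must check the side condition $n'\neq 2s-1$. Since $n'=2^{n-3}$ is even for $n\geq 4$ and $2s-1$ is odd, $n'\neq 2s-1$ holds automatically, so no case is excluded. This is the same parity observation used in Corollary \ref{t4.4}: the two graphs coincide only when $n=3$.

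With the hypotheses verified, Theorem \ref{t4.3}(i) gives the explicit isomorphism $\theta_{2^n,2,2^{n-3}}(C_{2^n}(R))=C_{2^n}(S)$. The underlying computation is the one in Theorem \ref{t4.1}: each odd jump $x=2q+1$ is shifted by $2t=2n'$, while the even jumps $2p_i$ are fixed by $\theta$, as noted in Remark \ref{r3.10}. This yields $C_{2^n}(R)\cong C_{2^n}(S)$. Part (ii) of Theorem \ref{t4.3} then gives the dichotomy: for each fixed choice of $n,s,k,p_1,\dots,p_{k-2},y$, the isomorphism is either Type-1, when $S=xR$ for some $x\in\varphi_{2^n}$, or else Type-2 w.r.t. $m=2$ by Definition \ref{d3.4}.

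The only real content is therefore the hypothesis translation. The step I expect to need care is making sure the even jumps $2p_i$ do not force extra conditions. If some $2p_i\equiv 0 \pmod 4$ or reduces reflexively, one must still have $\theta$ fixing it; this holds because all even residues are fixed by $\theta_{2^n,2,t}$, so nothing further is needed. One should also confirm that Definition \ref{d3.4} applies with $r=2y\in R,S$, which it does since $2\mid\gcd(2^n,2y)$ and $8\mid 2^n$.
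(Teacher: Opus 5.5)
Your proposal is correct and is essentially the paper's own argument: Corollary \ref{t4.6} is Theorem \ref{t4.3} specialised to order $2^n = 8\cdot 2^{n-3}$; equivalently it is Corollary \ref{t4.5}, whose proof is ``follows from Theorem \ref{t4.3}''. Your translation of the hypotheses is exactly what that specialisation needs, in particular the parity argument that $2^{n-3}\neq 2s-1$ for $n\geq 4$ and the equivalence of $\gcd(2^n,2y)=2$ with $\gcd(2^{n-1},y)=1$.
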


 The following is a more general result of Theorem \ref{t4.1} related to Type-2 isomorphism w.r.t. $m$ = 2.
	
\begin{theorem}{\rm  \quad \label{a22} For $n \geq 2$, $k \geq 3$, $1 \leq 2s-1 \leq 2n-1$, $n \neq 2s-1$, $R$ = $\{2s-1, 4n-(2s-1)$, $2p_1,2p_2,\dots,2p_{k-2}\}$ and $S$ = $\{ 2n-(2s-1)$, $2n+2s-1, 2p_1, 2p_2, \dots, 2p_{k-2} \}$, $\theta_{8n,2,t}(C_{8n}(R))$ = $\theta_{8n,2,2n+t}(C_{8n}(R))$ = $\theta_{8n,2,n+t}(C_{8n}(S))$ = $\theta_{8n,2,3n+t}(C_{8n}(S))$ and $\theta_{8n,2,t}(C_{8n}(S))$ = $\theta_{8n,2,2n+t}(C_{8n}(S))$ = $\theta_{8n,2,n+t}(C_{8n}(R))$ = $\theta_{8n,2,3n+t}(C_{8n}(R))$ where $n,s,p_1,p_2,\dots,p_{k-2} \in \mathbb{N}$, $\gcd(p_1,p_2,\dots,p_{k-2})$ = 1 and $0 \leq t \leq 4n-1$. }
\end{theorem}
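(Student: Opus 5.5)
The plan is to reduce everything to a composition law for the maps $\theta_{8n,2,t}$, together with the identities of Theorem \ref{t4.1} and Theorem \ref{t4.3}(i). All graphs are treated as labelled graphs on $\mathbb{Z}_{8n}$, and $\theta_{8n,2,t}$ is viewed as a vertex bijection (Lemma \ref{l3.1}). Then $\theta_{8n,2,t}(G)$ is the graph whose edge set is the image of $E(G)$, and this makes sense for every graph $G$, circulant or not. In particular, the equalities in the statement are equalities of edge sets, and they remain meaningful even when $\theta_{8n,2,t}(C_{8n}(R))$ is not of the form $C_{8n}(X)$.

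\textbf{Step 1 (composition law).} For $x = 2q+j$ with $j\in\{0,1\}$ we have $\theta_{8n,2,t'}(x) = x + 2jt'$. This number has the same parity $j$, since we add a multiple of $2$ and $8n$ is even. Hence
$$\theta_{8n,2,t}\bigl(\theta_{8n,2,t'}(x)\bigr) = x + 2jt' + 2jt = \theta_{8n,2,t+t'}(x).$$
Moreover $2j(t+4n) \equiv 2jt \pmod{8n}$, so the index $t$ may be read modulo $4n = \frac{8n}{2}$. This is item 15 of Table 1, and it also holds for the induced maps on graphs, because they are induced by composing vertex bijections.

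\textbf{Step 2 (base identities).} We need $\theta_{8n,2,2n}(C_{8n}(R)) = C_{8n}(R)$, $\theta_{8n,2,n}(C_{8n}(S)) = C_{8n}(R) = \theta_{8n,2,3n}(C_{8n}(S))$, and the symmetric ones with $R$ and $S$ interchanged, for the general sets $R$ and $S$ that contain the even jumps $2p_1,\dots,2p_{k-2}$. The quoted Theorem \ref{t4.3}(i) gives these. To be safe, I would recheck them directly via Theorem \ref{t3.8}. By property (i) of Lemma \ref{l3.1}, every even element of $R\cup(8n-R)$ is fixed by $\theta_{8n,2,t}$. The odd elements $2s-1$, $4n\mp(2s-1)$ and $8n-(2s-1)$ move exactly as in the computation in the proof of Theorem \ref{t4.1}. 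That computation never used the element $2$ in any essential way, so it carries over verbatim.

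\textbf{Step 3 (chain the identities).} Using Steps 1 and 2 we get
$$\theta_{8n,2,2n+t}(C_{8n}(R)) = \theta_{8n,2,t}\bigl(\theta_{8n,2,2n}(C_{8n}(R))\bigr) = \theta_{8n,2,t}(C_{8n}(R)).$$
In the same way,
$$\theta_{8n,2,n+t}(C_{8n}(S)) = \theta_{8n,2,t}(C_{8n}(R)) \quad\text{and}\quad \theta_{8n,2,3n+t}(C_{8n}(S)) = \theta_{8n,2,t}(C_{8n}(R)).$$
All indices are read modulo $4n$, which covers $0 \le t \le 4n-1$. The second chain follows by interchanging the roles of $R$ and $S$.

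The only delicate point, and the one I expect to be the main obstacle, is making Step 1 rigorous at the graph level rather than only for the jump sets. For $t$ with $\theta_{8n,2,t}(C_{8n}(R))$ not circulant, one cannot write the result as $C_{8n}(\theta_{8n,2,t}(R))$, so the argument must use the vertex-map definition $\theta((v_x,v_y)) = (\theta(v_x),\theta(v_y))$ from Definition \ref{d3.4}. I would handle this by stating explicitly that $\theta_{8n,2,t}$ acts on edge sets by image under the vertex bijection. Then Step 1 is simply functoriality of images under composed bijections.
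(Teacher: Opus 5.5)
Your proof is correct, but it is organised differently from the paper's.

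\textbf{The paper's route.} It never uses the composition law. It takes the special sets $R_2=\{2,2s-1,4n-(2s-1)\}$ and $S_2=\{2,2n-(2s-1),2n+2s-1\}$, with $R_1=R_2\cup(8n-R_2)$ and $S_1=S_2\cup(8n-S_2)$. For a general $t$ it writes out all eight sets $\theta_{8n,2,t}(R_1),\theta_{8n,2,n+t}(R_1),\dots,\theta_{8n,2,3n+t}(S_1)$ and reads off the coincidences. It then passes to arbitrary even jumps $2p_1,\dots,2p_{k-2}$ by citing ``Remark r3.7'', which does not occur anywhere in the paper.

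\textbf{Your route.} You factor $\theta_{8n,2,cn+t}=\theta_{8n,2,t}\circ\theta_{8n,2,cn}$ and reduce everything to the base identities for $c=1,2,3$. The arithmetic underneath is the same. Writing $R^{\pm}=R\cup(8n-R)$ and $S^{\pm}=S\cup(8n-S)$, the odd part of $R^{\pm}$ is invariant under translation by $4n$, and translation by $2n$ carries it onto the odd part of $S^{\pm}$.

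\textbf{What your version gains.} First, the identities are proved as equalities of edge sets. They are therefore meaningful for those $t$ at which $\theta_{8n,2,t}(C_{8n}(R))$ is not circulant. The paper compares jump sets there, tacitly using an unstated fact: for $m=2$, each even vertex $z$ of the image has neighbourhood $z+\theta_{8n,2,t}(R^{\pm})$, and this determines the image graph. Second, the extra even jumps are handled directly by Lemma~\ref{l3.1}(i) instead of by a missing reference.

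\textbf{What the paper's version gains.} The brute-force computation exhibits the common image set explicitly for every $t$. That is what one uses to decide for which $t$ the image is circulant.

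\textbf{On Step~2.} Your recheck is necessary, not optional, because the paper's proof of Theorem~\ref{t4.3}(i) rests on the same missing remark. You only need the easy direction of Theorem~\ref{t3.8}, and for $m=2$ it is immediate:
\begin{itemize}
\item for even $z$ we have $\theta_{8n,2,t}(z+d)=z+\theta_{8n,2,t}(d)$;
\item an odd image vertex $y=x+2t$ has neighbours $y+d$ for even $d\in R^{\pm}$ and $y+d-2t$ for odd $d\in R^{\pm}$.
\end{itemize}
Hence $\theta_{8n,2,t}(R^{\pm})=S^{\pm}$, together with the symmetry of $R^{\pm}$ and $S^{\pm}$, yields exactly $C_{8n}(S)$.
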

\begin{proof}\quad To simplify our work, let $R_1$ = $\{2, 2s-1, 4n-(2s-1), 4n+2s-1, 8n-(2s-1), 8n-2\}$ and $S_1$ = $\{ 2, 2n-(2s-1), 2n+2s-1, 6n-(2s-1), 6n+2s-1, 8n-2 \}$. 
\\	
For $0 \leq t \leq 4n-1$, 
\\
$\theta_{8n,2,t}(R_1)$ = $\theta_{8n,2,t}(\{2, 2s-1, 4n-(2s-1), 4n+2s-1, 8n-(2s-1), 8n-2\})$ 

\hspace{1.2cm} = $\{2, 2s-1+2t, 4n-(2s-1)+2t, 4n+2s-1+2t, 8n-(2s-1)+2t, 8n-2\}$;
\\
$\theta_{8n,2,n+t}(R_1)$ = $\theta_{8n,2,n+t}(\{2, 2s-1, 4n-(2s-1), 4n+2s-1, 8n-(2s-1), 8n-2\})$ 

= $\{2, 2s-1+2n+2t, 4n-(2s-1)+2n+2t, 4n+2s-1+2n+2t, 8n-(2s-1)+2n+2t, 8n-2\}$ 

= $\{2, 2n+2s-1+2t, 6n-(2s-1)+2t, 6n+2s-1+2t, 2n-(2s-1)+2t, 8n-2\}$;
\\
$\theta_{8n,2,2n+t}(R_1)$ = $\theta_{8n,2,2n+t}(\{2, 2s-1, 4n-(2s-1), 4n+2s-1, 8n-(2s-1), 8n-2\})$ 

= $\{2, 2s-1+4n+2t, 4n-(2s-1)+4n+2t, 4n+2s-1+4n+2t, 8n-(2s-1)+4n+2t, 8n-2\}$ 

= $\{2, 4n+2s-1+2t, 8n-(2s-1)+2t, 2s-1+2t, 4n-(2s-1)+2t, 8n-2\}$ = $\theta_{8n,2,t}(R_1)$;
\\
$\theta_{8n,2,3n+t}(R_1)$ = $\theta_{8n,2,3n+t}(\{2, 2s-1, 4n-(2s-1), 4n+2s-1, 8n-(2s-1), 8n-2\})$ 

= $\{2, 2s-1+6n+2t$, $4n-(2s-1)+6n+2t, 4n+2s-1+6n+2t, 8n-(2s-1)+6n+2t, 8n-2\}$ 

= $\{2, 6n+2s-1+2t$, $2n-(2s-1)+2t, 2n+2s-1+2t, 6n-(2s-1)+2t, 8n-2\}$ = $\theta_{8n,2,n+t}(R_1)$;
\\
$\theta_{8n,2,t}(S_1)$ = $\theta_{8n,2,t}(\{2, 2n-(2s-1), 2n+2s-1, 6n-(2s-1), 6n+2s-1, 8n-2\})$ 

\hspace{1.15cm} = $\{2, 2n-(2s-1)+2t, 2n+2s-1+2t, 6n-(2s-1)+2t, 6n+2s-1+2t, 8n-2\}$;
\\
$\theta_{8n,2,n+t}(S_1)$ = $\theta_{8n,2,n+t}(\{2, 2n-(2s-1), 2n+2s-1, 6n-(2s-1), 6n+2s-1, 8n-2\})$ 

= $\{2, 2n-(2s-1)+2n+2t, 2n+2s-1+2n+2t, 6n-(2s-1)+2n+2t, 6n+2s-1+2n+2t, 8n-2\}$ 

= $\{2, 4n-(2s-1)+2t, 4n+2s-1+2t, 8n-(2s-1)+2t, 2s-1+2t, 8n-2\}$ = $\theta_{8n,2,2n+t}(R_1)$;
\\
$\theta_{8n,2,2n+t}(S_1)$ = $\theta_{8n,2,2n+t}(\{2, 2n-(2s-1), 2n+2s-1, 6n-(2s-1), 6n+2s-1, 8n-2\})$ 

= $\{2, 2n-(2s-1)+4n+2t, 2n+2s-1+4n+2t, 6n-(2s-1)+4n+2t, 6n+2s-1+4n+2t, 8n-2\}$ 

= $\{2, 6n-(2s-1)+2t, 6n+2s-1+2t, 2n-(2s-1)+2t, 2n+2s-1+2t, 8n-2\}$ 

= $\theta_{8n,2,t}(S_1)$ = $\theta_{8n,2,n+t}(R_1)$ = $\theta_{8n,2,3n+t}(R_1)$; and
\\
$\theta_{8n,2,3n+t}(S_1)$ = $\theta_{8n,2,3n+t}(\{2, 2n-(2s-1), 2n+2s-1, 6n-(2s-1), 6n+2s-1, 8n-2\})$ 

= $\{2, 2n-(2s-1)+6n+2t, 2n+2s-1+6n+2t, 6n-(2s-1)+6n+2t, 6n+2s-1+6n+2t, 8n-2\}$ 

= $\{2, 8n-(2s-1)+2t, 2s-1+2t, 4n-(2s-1)+2t, 4n+2s-1+2t, 8n-2\}$ 

= $\theta_{8n,2,n+t}(S_1)$ = $\theta_{8n,2,t}(R_1)$ = $\theta_{8n,2,2n+t}(R_1)$.

This implies, $\theta_{8n,2,t}(C_{8n}(R_2))$ = $\theta_{8n,2,2n+t}(C_{8n}(R_2))$ = $\theta_{8n,2,n+t}(C_{8n}(S_2))$ = $\theta_{8n,2,3n+t}(C_{8n}(S_2))$ and $\theta_{8n,2,t}(C_{8n}(S_2))$ = $\theta_{8n,2,2n+t}(C_{8n}(S_2))$ = $\theta_{8n,2,n+t}(C_{8n}(R_2))$ = $\theta_{8n,2,3n+t}(C_{8n}(R_2))$ where $R_2$ = $\{2,2s-1$, $4n-(2s-1)\}$ and $S_2$ = $\{ 2, 2n-(2s-1), 2n+2s-1 \}$. Then the result follows from Remark \ref{r3.7}.
\end{proof}   

For a given circulant graph $C_n(R)$ with $R$ = $\{2,2s-1, 2s'-1\}$, the following theorem gives conditions on $n, t$ and its jump sizes under which the transformed graph $\theta_{n,2,t}(C_n(R))$ and $C_n(R)$ are isomorphic of Type-2 w.r.t. $m$ = 2. Proof given here is different from the one given in \cite{v20}.

\begin{theorem}{\rm \cite{v20}}\quad \label{t4.8} {\rm For $n \geq 2$, $1 \leq 2s-1 < 2s'-1 \leq [\frac{n}{2}]$, $0 \leq t \leq [\frac{n}{2}]$, $R$ = $\{2,2s-1, 2s'-1\}$ and $n,s,s'\in \mathbb{N}$, if $\theta_{n,2,t}(C_n(R))$ and $C_n(R)$ are  isomorphic circulant graphs of Type-2 w.r.t. $m$ = 2 for some $t$, then $n \equiv 0~(mod ~ 8)$, $2s-1+2s'-1$ = $\frac{n}{2}$, $2s-1 \neq \frac{n}{8}$, $t$ = $\frac{n}{8}$ or $\frac{3n}{8}$, $1 \leq 2s-1 \leq \frac{n}{4}$ and $n \geq 16$. In particular, when $R$ = $\{2, 2s-1, 4n-(2s-1)\}$, $S$ = $\{2, 2n-(2s-1), 2n+2s-1\}$, $n\geq 2$ and $n,s\in \mathbb{N}$, $\theta_{8n,2,n}(C_{8n}(R))$ = $C_{8n}(S)$ = $\theta_{8n,2,3n}(C_{8n}(R))$, $\theta_{8n,2,n}(C_{8n}(S))$ = $C_{8n}(R)$ = $\theta_{8n,2,3n}(C_{8n}(S))$, $\theta_{8n,2,2n}(C_{8n}(R))$ = $C_{8n}(R)$, $\theta_{8n,2,2n}(C_{8n}(S))$ = $C_{8n}(S)$ and $C_{8n}(R)$ and $C_{8n}(S)$ are Type-2 isomorphic w.r.t. $m$ = 2. }
\end{theorem}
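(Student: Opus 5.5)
The plan is to work directly with the symmetric jump set $R' = R\cup(n-R) = \{2,\,a,\,b,\,n-b,\,n-a,\,n-2\}$, where $a=2s-1$ and $b=2s'-1$. By Definition \ref{d3.4} with $m=2$, $8\mid n$ is forced. The map $\theta_{n,2,t}$ fixes the even jumps $2,n-2$ and sends each odd $x$ to $x+2t$ (mod $n$). By Theorem \ref{t3.8}, $\theta_{n,2,t}(C_n(R))$ is a circulant $C_n(S)$ exactly when the image set
\[
T=\{a+2t,\ b+2t,\ n-b+2t,\ n-a+2t\}
\]
is closed under $x\mapsto n-x$. Since $a<b\le n/2$ and all four elements are odd, $T$ consists of four odd residues, so the closure condition forces a perfect matching of $T$ into pairs summing to $0 \pmod n$.

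The key step is a case split on the three possible matchings.

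(1) Pairing $a+2t$ with $n-a+2t$, and $b+2t$ with $n-b+2t$, gives $4t\equiv 0 \pmod n$, so $t\in\{0,n/4\}$ in the range $0\le t\le [n/2]$. Then $2t\in\{0,n/2\}$. For $t=n/4$ the image is $\{a+n/2,\,b+n/2,\dots\}$, which equals $(n/2+1)R'$ because $n/2+1$ is a unit when $4\mid n$. So the graph is $C_n(R)$ itself or a Type-1 image, and hence not Type-2.

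(2) Pairing $a+2t$ with $n-b+2t$, and $b+2t$ with $n-a+2t$, gives $a-b+4t\equiv0$ and $b-a+4t\equiv 0$. Hence $8t\equiv0$ and $2(b-a)\equiv 0 \pmod n$. Since $0<b-a<n/2$, this is impossible.

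(3) Pairing $a+2t$ with $b+2t$, and $n-b+2t$ with $n-a+2t$, gives $a+b+4t\equiv 0$ and $-(a+b)+4t\equiv0$. Hence $8t\equiv 0$ and $2(a+b)\equiv 0\pmod n$. Since $a+b$ lies in $(0,n)$, we get $a+b=n/2$. Then $4t\equiv -n/2 \equiv n/2 \pmod n$, so $t\equiv n/8 \pmod{n/4}$. In range this gives $t=n/8$ or $3n/8$, and in particular $8\mid n$, i.e.\ $n\equiv 0 \pmod 8$. The requirements $a<b$ and $a+b=n/2$ give $a<n/4$, and oddness handles the boundary; this yields $1\le 2s-1\le n/4$.

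Writing $n=8N$ with $a+b=4N$, the set $R$ is exactly the family $\{2,2s-1,4N-(2s-1)\}$ of Theorem \ref{t4.2}. The image is then $S=\{2,2N-a,2N+a\}$, which equals $R$ precisely when $a=N=n/8$. So $2s-1\ne n/8$ is needed for $S\ne R$. The case $N=1$ (that is, $n=8$) forces $a=1=N$, which is excluded, so $n\ge16$. The "in particular" part is then immediate from Theorems \ref{t4.1} and \ref{t4.2}, including the non-Type-1 verification done there via Table 3.

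I expect the main obstacle to be case (1). There one must check carefully that the resulting graph is genuinely Type-1, and not a new Type-2 graph, using the multiplier $n/2+1$. One must also handle the subtlety that $4t\equiv0$ only requires $4\mid n$ rather than $8\mid n$; since $8\mid n$ is already imposed by the definition, this case contributes only Type-1 images, which excludes it.
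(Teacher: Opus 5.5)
Your proof is correct, but it takes a different route from the paper.

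\textbf{The paper's route.} The paper argues geometrically on the regular $8a$-gon. The even cycle $C_0$ stays fixed while $C_1$ rotates rigidly, so the cyclic order of $u_1=v_{2s-1}$, $u_2=v_{2s'-1}$, $u_3=v_{n-(2s'-1)}$, $u_4=v_{n-(2s-1)}$ is preserved. Symmetry about $v_0v_{4a}$ then puts two of the $u_i$ on each side, and Figures 16--18 give three configurations:
\begin{itemize}
\item Cases 1 and 3 are both your case (3), the pairing $u_1\leftrightarrow u_2$, $u_3\leftrightarrow u_4$. They give $2s-1+2s'-1=4a$, and the paper then appeals to Theorem \ref{t4.2} for $t=a$ and $t=3a$.
\item Case 2 is your case (1). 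It is disposed of as Type-1 via the multiplier $4a-1\equiv-(n/2+1)$, which is equivalent to your $n/2+1$ under reflexive reduction.
\end{itemize}

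\textbf{What your route buys.} You replace the pictures by an exhaustive enumeration of the three fixed-point-free involutions on the four odd residues of $T$. This has three advantages:
\begin{itemize}
\item The ``diagonal'' pairing (your case (2)) is excluded explicitly by $0<b-a<n/2$. The paper excludes it only tacitly, through rotation preserving cyclic order in the figures.
\item The values $t\equiv n/8 \pmod{n/4}$ are derived, not read off Theorem \ref{t4.1}.
\item Neither Case 1 nor Case 3 needs the figures to be trusted.
\end{itemize}
The paper's approach, in exchange, makes visible why only rotations of $C_1$ by $n/8$ and $3n/8$ can restore the symmetric equidistance condition.

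\textbf{Two minor points.}
\begin{itemize}
\item In case (1), the range $0\le t\le[\frac{n}{2}]$ also contains $t=n/2$. This is harmless, since $2t\equiv 0$ gives the identity.
\item The ``in particular'' clause is only true under the hypotheses $1\le 2s-1\le 2n-1$ and $n\ne 2s-1$ of Theorem \ref{t4.2}; otherwise $R=S$. Both you and the paper import these silently by citing that theorem.
\end{itemize}
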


\begin{proof}\quad Using the definition of $\theta_{n,2,t}$, we have $m$ = 2 divides $\gcd(n,r)$ = $\gcd(n,2)$ = $2$ and $m^3$ = $8$ divides $n$ which implies, $n \equiv 0~(mod ~ 8)$. Let $n$ = $8a$ and so the circulant graph $C_n(R)$ = $C_{8a}(R)$ and $\theta_{n,2,t}$ = $\theta_{8a,2,t}$ where $R$ = $\{ 2, 2s-1, 2s'-1 \}$. Also, $\theta_{8a,2,t}(2)$ = $2$, $\theta_{8a,2,t}(8a-2)$ = $8a-2$, $\theta_{8a,2,t}(2s-1)$ = $2s-1+2t$, $\theta_{8a,2,t}(2s'-1)$ = $2s'-1+2t$, $\theta_{8a,2,t}(8a-2s'+1)$ = $8a-2s'+1+2t$ and $\theta_{8a,2,t}(8a-2s+1)$ = $8a-2s+1+2t$, $0\leq t \leq \left[\frac{n}{2}\right]$ = $4a$ and $a\in\mathbb{N}$. 

Let Figure 15 corresponds to $C_{8a}(R)$ for $R$ = $ \{ 2, 2s-1, 2s'-1 \}$. Let $V(C_{8a}(R))$ = $\{v_0, v_1, \dots, v_{8a-1} \}$, cycles $C$ = $(v_0 v_1 \dots v_{8a-1})$, $C_0$ = $(v_0 v_2 \dots v_{8a-2})$ and $C_1$ = $(v_1 v_3 \dots v_{8a-1})$. Let $u_1$ = $v_{2s-1}$, $u_2$ = $v_{2s'-1}$, $u_3$ = $v_{8a-2s'+1}$ and $u_4$ = $v_{8a-2s+1}$. Clearly, $v_0,v_{2a},v_{4a},v_{6a}\in C_0$ and $u_1,u_2,u_3,u_4\in C_1$. Let $d(u,v)$ denote the distance measured from $u$ to $v$ w.r.t. the vertices $v_0, v_1, \dots, v_{8a-1}$ of the regular $8a$-gon, measured in the clockwise direction. $C_{8a}(R)$ may contain cycles $C$, $C_0$, $C_1$. Also, $d(v_{0}, v_{2a})$ = $d(v_{2a}, v_{4a})$ = $d(v_{4a}, v_{6a})$ = $d(v_{6a}, v_{0})$. See Figure 15. 

Under the transformation, $\theta_{8a,2,t}$ acting on $C_{8a}(R)$, cycle $C_0$ doesn't change but $C_1$ simply rotates so that the relative positions of $u_1, u_2, u_3$ and $u_4$ remain the same in $C_1$. Also, if $\theta_{8a,2,t}(C_{8a}(R))$ is circulant in its representation, then two out of the $4$ vertices $u_1, u_2, u_3$ and $u_4$ lie to the right of $v_0 v_{4a}$ and the other two to the left and correspondingly, the following 3 cases arise. Just to simplify our proof, we consider Figures 16, 17 and 18 corresponding to the three cases. In each case, we get at the most one possible (transformed) circulant graph of the form $C_{8a}(S)$ for some $S \subseteq [1, 4a]$ using the symmetric equidistance condition. See Figures 16, 17 and 18. 

Also, in Theorem \ref{t4.1}, we have $\theta_{8a,2,a}(C_{8a}(R))$ = $C_{8a}(S)$, $\theta_{8a,2,2a}(C_{8a}(R))$ = $C_{8a}(R)$, $\theta_{8a,2,3a}(C_{8a}(R))$ = $C_{8a}(S)$ and $\theta_{8a,2,4a}(C_{8a}(R))$ = $C_{8a}(R)$.

\vspace{.1cm}
\noindent
{\bf Case 1:} ~ Figure 16. 

From the figure if the transformed graph is circulant in its representation, then $d(v_0,u_4)$ = $d(u_3,v_0)$ (that is distance between $v_0$ and $u_4$ is equal to the distance between $u_3$ and $v_0$) and $d(v_0,u_1)$ = $d(u_2,v_0)$ and thereby $d(u_4,u_1)$ = $d(u_2,u_3)$, see Figure 16. This implies, $2(2s-1)$ = $2(4a-(2s'-1))$. This implies, $4a$ = $2s-1+2s'-1$ and $1 \leq 2s-1 < \frac{4a}{2}$ = $2a$. Also, when $2s-1$ = $a$, then $1 \leq 2s-1 = a < \frac{4a}{2}$ and the circulant graph $C_{8a}(R)$ is symmetric about $v_0v_{4a}$ (and $v_{\frac{4a}{2}}v_{\frac{3\times 4a}{2}})$. For $n$ = $8a$ and $a$ = $2s-1$, the two graphs are identical and for $a \geq 2$, $a \in \mathbb{N}$, $1 \leq 2s-1 \leq 2a$ and $a \neq 2s-1$, using Theorem \ref{t4.2}, $\theta_{n,2,\frac{n}{8}}(C_{n}(R))$ = $\theta_{8a,2,a}(C_{8a}(R))$ = $C_{8a}(S)$ and  $C_{8a}(R)$ and $C_{8a}(S)$ are Type-2 isomorphic w.r.t. $m$ = 2 where $S$ = $\{2$, $2a-(2s-1)$, $2a+2s-1 \}$, $1 \leq 2a-(2s-1) < 2a+2s-1 \leq 4a-1$. 

\begin{center}
\scalebox{0.8}{ \begin{tikzpicture}[thick, set/.style = { circle, minimum size = .02cm}]
		\node (0) at (-4,11.5) [circle,fill=red!30]{$v_{0}$};
		\node (1) at (-2.75,10.75) [circle][circle,fill=blue!30]{};
		\node (2) at (-2.25,10) [circle,fill=blue!30][label=2:]{};
		\node (3) at (-1.75,9) [circle,fill=blue!30][label=3:]{$u_{1}$};
		\node (4) at (-1.5,8) [circle,fill=blue!30][label=4:]{};
		\node (5) at (-2,5.5) [circle,fill=blue!30][label=5:]{$u_{2}$};
		\node (6) at (-3,4.5) [circle,fill=blue!30][label=6:]{};		
		
		\node (7) at (-4,3.75)[circle,fill=red!30][label=7:]{$v_{4a}$};
		\node (8) at (-4.75,4.5) [circle,fill=blue!30][label=8:]{};
		\node (9) at (-6,5.5) [circle,fill=blue!30][label=9:]{$u_{3}$};
		\node (10) at (-6.5,8) [circle,fill=blue!30][label=10:]{};
		\node (11) at (-6.25,9) [circle,fill=blue!30][label=11:]{$u_{4}$};
		\node (12) at (-6,10) [circle,fill=blue!30][label=12:]{};
		\node (13) at (-5.5,10.75) [circle,fill=blue!30][label=13:]{};
		
		\draw[->] [brown](-2.5,10.4) to [out=120,in=220] (-1.9,10.6);
		\node (14) at (-1.6,10.7) [label=14:]{$C_1$};

		\node (15) at (-.475,7.25) [circle,fill=red!30][label=15:]{$v_{2a}$};
		\node (16) at (-7.25,7.25) [circle,fill=red!30][label=16:]{$v_{6a}$};
		\node (17) at (-1.5,6.75) [circle,fill=blue!30][label=17:]{};
		\node (18) at (-6.35,6.75) [circle,fill=blue!30][label=18:]{};
		\draw [red][dashed](15) -- (16);
		\draw [dashed](4) -- (17);
		\draw [dashed](17) -- (5);
		\draw [dashed](10) -- (18);
		\draw [dashed](18) -- (9);
		\draw [dashed](11) -- (10);
		
		\draw [dashed](1) -- (2);
		\draw [dashed](2) -- (3);
		\draw [dashed](3) -- (4);
		\draw [dashed](5) -- (6);
		\draw [dashed](6) -- (8);
		\draw [dashed](8) -- (9);
		\draw [dashed](11) -- (12);
		\draw [dashed](12) -- (13);
		\draw [dashed](13) -- (1);
		
		\draw (0) -- (3);
		\draw (0) -- (5);
		\draw (0) -- (9);
		\draw (0) -- (11);
		
		\draw [red][dashed](0) -- (7);

		\node (00) at (5,11.5) [circle,fill=red!30]{$v_{0}$};
		\node (01) at (6.25,10.75) [circle][circle,fill=blue!30]{$u_{4}$};
		\node (02) at (6.75,10) [circle,fill=blue!30][label=02:]{};
		\node (03) at (7.25,9) [circle,fill=blue!30][label=03:]{};
		\node (04) at (7.5,7.75) [circle,fill=blue!30][label=04:]{};
		\node (05) at (7,5.5) [circle,fill=blue!30][label=05:]{};
		\node (06) at (6,4.5) [circle,fill=blue!30][label=06:]{$u_{1}$};		
		
		\node (07) at (5,3.75)[circle,fill=red!30][label=07:]{$v_{4a}$};
		\node (08) at (4.25,4.5) [circle,fill=blue!30][label=08:]{};
		\node (09) at (3,5.5) [circle,fill=blue!30][label=09:]{$u_{2}$};
		\node (010) at (2.5,7.75) [circle,fill=blue!30][label=010:]{};
		\node (011) at (2.5,9) [circle,fill=blue!30][label=011:]{};
		\node (012) at (2.5,10) [circle,fill=blue!30][label=012:]{$u_{3}$};
		\node (013) at (3.5,11.25) [circle,fill=blue!30][label=013:]{};

		\node (015) at (8.5,7.25) [circle,fill=red!30][label=015:]{$v_{2a}$};
		\node (016) at (1.5,7.25) [circle,fill=red!30][label=016:]{$v_{6a}$};
		\node (017) at (7.5,6.75) [circle,fill=blue!30][label=017:]{};
		\node (018) at (2.5,6.75) [circle,fill=blue!30][label=018:]{};

		\draw [red][dashed](015) -- (016);
		\draw [dashed](04) -- (017);
		\draw [dashed](017) -- (05);
		\draw [dashed](010) -- (018);
		\draw [dashed](018) -- (09);
		
		\draw [dashed](01) -- (02);
		\draw [dashed](02) -- (03);
		\draw [dashed](03) -- (04);
		\draw [dashed](05) -- (06);
		\draw [dashed](06) -- (08);
		\draw [dashed](08) -- (09);
		\draw [dashed](010) -- (011);
		\draw [dashed](011) -- (012);
		\draw [dashed](012) -- (013);
		\draw [dashed](013) -- (01);
		
		\draw (00) -- (01);
		\draw (00) -- (06);
		\draw (00) -- (09);
		\draw (00) -- (012);
		
		\draw [red][dashed](00) -- (07);
		
		\end{tikzpicture}}\\
	\vspace{0.2cm} 
	Figure $15$ \hspace{5.5cm} Figure $16$  \label{fig 9}			
\end{center}

\vspace{.1cm}
\noindent
{\bf Case 2:} ~ Figure 17. 

In this case, the transformed graph is circulant in its representation when $d(v_0,u_3)$ = $d(u_2,v_0)$ and $d(v_0,u_4)$ = $d(u_1,v_0)$. And let the transformed circulant graph be $C_{8a}(T)$ so that $C_{8a}(T)$ = $\theta_{8a,2,2n}(C_{8a}(R))$ where $T$ = $\{2, 4a-(2s'-1), 4a-(2s-1)\}$. From Table 4, we get $C_{8a}((4a-1)R))$ = $C_{8a}(T)$. This implies, $C_{8a}((4a-1)R)$  $\cong$ $C_{8a}(R)$, and the isomorphism is Type-1 isomorphism since $\gcd(8a, 4a-1)$ = $\gcd(4a, 4a-1)$ = 1. Already we have $C_{8a}(T)$ = $\theta_{8a,2,2a}(C_{8a}(R))$. Thus, in this case, the only possible transformed circulant graph is $C_{8a}(T)$ which is Adam's isomorphic to $C_{8a}(R)$. 

\begin{table}
\caption{Calculation of $(n-1)x$ and $\theta_{2n,2,\frac{n}{2}}(x)$ under arithmetic modulo $2n$.}
\begin{center}
\scalebox{0.8}{
\begin{tabular}{||c||*{6}{c|}}\hline \hline
\backslashbox{\\Operation}{Jump \\size $x$}
& 2 & $2r-1$ & $2s-1$ & $2n-(2s-1)$ & $2n-(2r-1)$ & $2n-2$\\\hline \hline
& &  &   &  &  &  \\
$(n-1)x$ & $2n-2$ & $n-(2r-1)$ & $n-(2s-1)$ & $n+2s-1$ & $n+2r-1$ & 2\\
& &  &   &  &  &  \\\hline
& &  &   &  &  &  \\
$\theta_{2n,2,\frac{n}{2}}(x)$ & 2 & $n+2r-1$ & $n+2s-1$ & $n-(2s-1)$ & $n-(2r-1)$ & $2n-2$\\
& &  &   &  &  &  \\\hline \hline
\end{tabular}}
\end{center}
\end{table}

\noindent
{\bf Case 3:} ~ Figure 18. 

This case is similar to case 1 and the only difference is that the transformed circulant graph $C_{n}(Y)$ = $\theta_{n,2,\frac{3n}{8}}(C_{8a}(R))$ = $\theta_{8a,2,3a}(C_{8a}(R))$ = $C_{8a}(Y)$ where $n$ = $8a$, $a \geq 2$, $R$ = $\{2, 2s-1, 4a-(2s-1)\}$ and $Y$ = $\{2, 2a-(2s-1), 2a+2s-1\}$, $1 \leq 2s-1 \leq 2a$, $a \neq 2s-1$ and $a\in \mathbb{N}$.

Hence the result follows. 
\end{proof}

\begin{center}
\scalebox{0.8}{ \begin{tikzpicture}[thick, set/.style = { circle, minimum size = .02cm}]
		\node (0) at (-4,11.5) [circle,fill=red!30]{$v_{0}$};
		\node (1) at (-2.75,10.75) [circle][circle,fill=blue!30]{};
		\node (2) at (-2.25,10) [circle,fill=blue!30][label=2:]{};
		\node (3) at (-1.75,9) [circle,fill=blue!30][label=3:]{$u_{3}$};
		\node (4) at (-1.5,8) [circle,fill=blue!30][label=4:]{};
		\node (5) at (-2,5.5) [circle,fill=blue!30][label=5:]{$u_{4}$};
		\node (6) at (-3,4.5) [circle,fill=blue!30][label=6:]{};		
		
		\node (7) at (-4,3.75)[circle,fill=red!30][label=7:]{$v_{4a}$};
		\node (8) at (-4.75,4.5) [circle,fill=blue!30][label=8:]{};
		\node (9) at (-6,5.5) [circle,fill=blue!30][label=9:]{$u_{1}$};
		\node (10) at (-6.25,8) [circle,fill=blue!30][label=10:]{};
		\node (11) at (-6.25,9) [circle,fill=blue!30][label=11:]{$u_{2}$};
		\node (12) at (-5.75,10) [circle,fill=blue!30][label=12:]{};
		\node (13) at (-5.25,10.75) [circle,fill=blue!30][label=13:]{};
		
		\node (15) at (-.775,7.25) [circle,fill=red!30][label=15:]{$v_{2a}$};
		\node (16) at (-7.25,7.25) [circle,fill=red!30][label=16:]{$v_{6a}$};
		\node (17) at (-1.5,6.75) [circle,fill=blue!30][label=17:]{};
		\node (18) at (-6.25,6.75) [circle,fill=blue!30][label=18:]{};
		\draw [red][dashed](15) -- (16);
		\draw [dashed](4) -- (17);
		\draw [dashed](17) -- (5);
		\draw [dashed](10) -- (18);
		\draw [dashed](18) -- (9);
		
		\draw [dashed](1) -- (2);
		\draw [dashed](2) -- (3);
		\draw [dashed](3) -- (4);
		\draw [dashed](5) -- (6);
		\draw [dashed](6) -- (8);
		\draw [dashed](8) -- (9);
		\draw [dashed](10) -- (11);
		\draw [dashed](11) -- (12);
		\draw [dashed](12) -- (13);
		\draw [dashed](13) -- (1);
		
		\draw (0) -- (3);
		\draw (0) -- (5);
		\draw (0) -- (9);
		\draw (0) -- (11);
		
		\draw [red][dashed](0) -- (7);

		\node (00) at (5,11.5) [circle,fill=red!30]{$v_{0}$};
		\node (01) at (6.25,10.75) [circle][circle,fill=blue!30]{$u_{2}$};
		\node (02) at (6.75,10) [circle,fill=blue!30][label=02:]{};
		\node (03) at (7.25,9) [circle,fill=blue!30][label=03:]{};
		\node (04) at (7.5,7.75) [circle,fill=blue!30][label=04:]{};
		\node (05) at (7,5.5) [circle,fill=blue!30][label=05:]{};
		\node (06) at (6,4.5) [circle,fill=blue!30][label=06:]{$u_{3}$};		
		
		\node (07) at (5,3.75)[circle,fill=red!30][label=07:]{$v_{4a}$};
		\node (08) at (4.25,4.5) [circle,fill=blue!30][label=08:]{};
		\node (09) at (3,5.5) [circle,fill=blue!30][label=09:]{$u_{4}$};
		\node (010) at (2.5,7.75) [circle,fill=blue!30][label=010:]{};
		\node (011) at (2.5,9) [circle,fill=blue!30][label=011:]{};
		\node (012) at (2.5,10) [circle,fill=blue!30][label=012:]{$u_{1}$};
		\node (013) at (3.5,11.25) [circle,fill=blue!30][label=013:]{};

		\node (015) at (8.5,7.25) [circle,fill=red!30][label=015:]{$v_{2a}$};
		\node (016) at (1.5,7.25) [circle,fill=red!30][label=016:]{$v_{6a}$};
		\node (017) at (7.5,6.75) [circle,fill=blue!30][label=017:]{};
		\node (018) at (2.5,6.75) [circle,fill=blue!30][label=018:]{};

		\draw [red][dashed](015) -- (016);
		\draw [dashed](04) -- (017);
		\draw [dashed](017) -- (05);
		\draw [dashed](010) -- (018);
		\draw [dashed](018) -- (09);
		
		\draw [dashed](01) -- (02);
		\draw [dashed](02) -- (03);
		\draw [dashed](03) -- (04);
		\draw [dashed](05) -- (06);
		\draw [dashed](06) -- (08);
		\draw [dashed](08) -- (09);
		\draw [dashed](010) -- (011);
		\draw [dashed](011) -- (012);
		\draw [dashed](012) -- (013);
		\draw [dashed](013) -- (01);
		
		\draw (00) -- (01);
		\draw (00) -- (06);
		\draw (00) -- (09);
		\draw (00) -- (012);
		
		\draw [red][dashed](00) -- (07);
		
		\end{tikzpicture}}\\
	\vspace{0.2cm} 
	Figure $17$ \hspace{5.5cm} Figure $18$  \label{fig9b}			
\end{center}

\begin{cor} \quad \label{t4.9} {\rm For $n \geq 2$, $k \geq 3$, $R$ = $\{2s-1,$ $2s'-1, 2p_1, 2p_2, \dots, 2p_{k-2}\}$, $r\in R$, $m$ = 2 divides $\gcd(n, r)$, $m^2$ = 8 divides $n$, $1 \leq 2s-1 < 2s'-1 \leq [\frac{n}{2}]$, $0 \leq t \leq [\frac{n}{2}]$, $\gcd(p_1,p_2,...,p_{k-2})$ = 1 and $n,r,s,s',p_1,p_2,\dots,p_{k-2} \in \mathbb{N}$, if for a given set of values of $p_1,p_2,\dots,p_{k-2},s$ and $s'$,  $\theta_{n,2,t}(C_n(R))$ and $C_n(R)$ are  isomorphic circulant graphs of Type-2 w.r.t. $m$ = 2 for some $t,$ then $n$ $\equiv$ $0~(mod ~ 8),$ $2s-1+2s'-1$ = $\frac{n}{2},$ $2s-1$ $\neq$ $\frac{n}{8},$ $t$ = $\frac{n}{8}$ or $\frac{3n}{8},$ $1 \leq 2s-1 \leq \frac{n}{4}$ and $n \geq 16.$  \hfill $\Box$ }
\end{cor}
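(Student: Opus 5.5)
The plan is to reduce Corollary \ref{t4.9} to Theorem \ref{t4.8}. The key fact is that under $\theta_{n,2,t}$ every even jump size is fixed, since $\theta_{n,2,t}(2q)=2q$ by property (i) in the proof of Lemma \ref{l3.1}. Only the odd jump sizes $2s-1$, $2s'-1$ and their negatives $n-(2s-1)$, $n-(2s'-1)$ move, each by $+2t$.

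First, the hypothesis that $m=2$ divides $\gcd(n,r)$ and $8=m^3$ divides $n$ gives $n\equiv 0 \pmod 8$ directly, exactly as in the opening of the proof of Theorem \ref{t4.8}. Write $n=8a$.

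Next, by Theorem \ref{t3.8}, $\theta_{8a,2,t}(C_{8a}(R))$ is a circulant graph $C_{8a}(S)$ if and only if $\theta_{8a,2,t}(R\cup(8a-R))$ is closed under negation modulo $8a$. The even part $\{2p_i,\,8a-2p_i\}$ is fixed pointwise and is already symmetric. So the symmetric equidistance condition with respect to $v_0$ reduces to the same condition on the four odd images $2s-1+2t$, $2s'-1+2t$, $8a-(2s'-1)+2t$, $8a-(2s-1)+2t$. These are the images of the vertices $u_1,u_2,u_3,u_4$ on the rotating cycle $C_1$. This is precisely the configuration analysed in Figures 15 to 18. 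Hence the case analysis of Theorem \ref{t4.8} (Cases 1, 2, 3) applies verbatim. The only possibilities are:
\begin{enumerate}
\item[(a)] $2s-1+2s'-1=4a=\frac{n}{2}$ with $t=a$ or $t=3a$;
\item[(b)] the Case 2 configuration, $t=2a$, giving $T=\{4a-(2s'-1),4a-(2s-1)\}\cup\{2p_i\}$.
\end{enumerate}

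It remains to rule out (b) and the degenerate subcase of (a). In case (b), multiplication by $4a-1$ (coprime to $8a$) fixes every even residue modulo $8a$: since $2p(4a-1)\equiv -2p$, the even part stays symmetric. It also sends $\pm(2s-1)$ to $\mp(4a-(2s-1))$, as in Table 4. So $C_{8a}(T)=C_{8a}((4a-1)R)$ is Type-1, not Type-2. In case (a) with $2s-1=a$, the sets coincide, as in Theorem \ref{t4.2}, so the transformed graph is not a different Type-2 graph; hence $2s-1\neq \frac{n}{8}$. Finally, $2s-1<2s'-1$ together with $2s-1+2s'-1=\frac{n}{2}$ gives $2s-1\le\frac{n}{4}$. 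Moreover $a\ge 2$, since for $a=1$ the only admissible value is $2s-1=1=a$, which is excluded; therefore $n\ge 16$.

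The main obstacle is justifying that the presence of extra even jumps $2p_i$ cannot create new symmetric configurations for the odd images beyond those of Theorem \ref{t4.8}. The point I would make explicit is that odd and even residues never mix under $\theta_{n,2,t}$, and that negation preserves parity. Therefore closure under negation splits into separate conditions on the even part and the odd part, and the odd part is identical to the $k=3$ situation. The condition $\gcd(p_1,\dots,p_{k-2})=1$ plays no role in the argument beyond ensuring that an even jump $r$ with $\gcd(n,r)=2$ exists.
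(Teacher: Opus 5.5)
Your proposal is correct and follows the paper's route. The paper gives Corollary \ref{t4.9} no separate proof and treats it as an immediate consequence of Theorem \ref{t4.8}, together with the fact (Remark \ref{r3.10}) that $\theta_{n,2,t}$ leaves even jump sizes unchanged; your parity-splitting argument, plus the check that multiplication by $4a-1$ also preserves the even jumps, supplies exactly what that reduction needs. Only cosmetic slips remain: $4a-1$ negates rather than fixes even residues, and it sends $2s-1$ to $4a-(2s-1)$, not to its negative; the identity values $t=0,4a$ should be listed and discarded; and $\gcd(p_1,\dots,p_{k-2})=1$ does not guarantee an even jump with $\gcd(n,r)=2$ (e.g.\ $n=48$, $p_1=2$, $p_2=3$), though none is needed, since any even jump gives $2\mid\gcd(n,r)$.
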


\section {Conclusion}

\begin{enumerate}
\item Most of the results derived in this paper on Type-2 isomorphism are ralated to circulant graphs with even number of isomorphic circulant subgraphs ($m_i$ is even). Type-2 isomorphism on circulant graphs with odd number of isomorphic circulant  subgraphs ($m_i$ is odd) are obtained in \cite{v24}, \cite{vw1}-\cite{vw3} using the previous definition of Type-2 isomorphism and in \cite{v2-5} we use definition \ref{d3.4} to obtain Type-2 isomorphic circulant graphs w.r.t. $m$ = $p$ and of order $np^3$ where $p$ is a prime number and $n\in\mathbb{N}$.  

\item	We developed a VB program POLY215.EXE to show visually how Type-2 isomorphism of $C_{8n}(R)$ for $m = 2$ and $R$ = $\{2, 2s-1$, $4n-(2s-1)\}$ is taking place, $n \geq 2$ and $n,s\in {\mathbb N}$. Using this program, we obtain Type-1 and Type-2 isomorphic circulant graphs, if exist, for given circulant graph $C_{8n}(R)$ and display its transformed isomorphic graphs $\theta_{8n,2,t}(C_{8n}(R))$ for $t$ = $0,1,2,\dots,4n-1$. The VB program is presented at the end.  

\item In \cite{v24} computer programs are developed to generate circulant graphs of finite order and obtained their Type-1 and Type-2 isomorphic circulant graphs, if exist. 

\item We could see that for $n \geq 16$, $0 \leq t \leq \frac{n}{m}-1$, $R = \{r_1,r_2,\dots,r_k\}$ and $x \in \varphi_n$, $\varphi_{n,x}(C_n(R))$ and $\theta_{n,m,t}(C_n(R))$ may represent different forms of circulant graph $C_n(R)$ where $V(C_{n}(R))$ $=$ $\{v_0$, $v_1, \dots, v_{n-1}\}$, $V(C_{n}(\{1,2,\dots,\left\lfloor \frac{n}{2} \right\rfloor))$ = $V(K_n)$ = $\{u_0, u_1, \dots,$ $u_{n-1}\}$, $r \in R$, $m > 1$ divides $\gcd(n,r)$ and $m^3$ divides $n$. Now, the question is,\lq\lq Is it possible to find out more forms of representation of graphs which are isomorphic to $C_n(R)?$\rq\rq. Yes, similar to $\theta_{n,m,t},$ we define the following mappings: 

\begin{enumerate}
\item Let $r \in \mathbb{Z}_n$, $m > 1$ be a diviser of $\gcd(n,r)$ and $m^3$ be a diviser of $n$. Define bijective mapping $\theta_{n,m,t_0,t_1,\dots,t_{m-1}}:$ $\mathbb{Z}_n$ $\rightarrow$ $\mathbb{Z}_n$ such that $\theta_{n,m,t_0,t_1,\dots,t_{m-1}}(x) = (q+jt_j)m+j$ = $\theta_{n,m,t_j}(x),$ under arithmetic modulo $n$  where $x$ = $qm+j$, $x \in \mathbb{Z}_n$, $0 \leq i,j \leq m-1$, $0 \leq q,t_i,t_i' \leq \frac{n}{m}-1$, $i,j,q,t_i,t_i' \in \mathbb{Z}_n$ and define $\circ$ in $\{\theta_{n,m,t_0,t_1,\dots,t_{m-1}}:$ $t_i\in [0,~\frac{n}{m}-1]$, $0 \leq i \leq m-1 \}$ such that $\theta_{n,m,t_0,t_1,\dots,t_{m-1}}$ $\circ$ $\theta_{n,m,t'_0,t'_1,\dots,t'_{m-1}}$ = $\theta_{n,m,t_0+t'_0,t_1+t'_1,\dots,t_{m-1}+t'_{m-1}}$ and $t_i+t'_i$ is calculated under addition modulo $\frac{n}{m}$, $0 \leq i \leq m-1$. 

Similarly, define one-to-one mapping $\theta_{n,m,t_0,t_1,\dots,t_{m-1}}:$ $V(C_n(R))$ $\rightarrow$ $V(K_n)$ for a set $R$ = $\{ r_1,r_2,\dots,r_k,n-r_k,n-r_{k-1} \dots, n-r_1 \}$ such that $\theta_{n,m,t_0,t_1,\dots,t_{m-1}}(v_x)$ = $u_{j+(q+jt_j)m}$ = $\theta_{n,m,t_j}(v_x)$ and $\theta_{n,m,t_0,t_1,\dots,t_{m-1}}$ $((v_x , v_{x+s}))$ = $(\theta_{n,m,t_0,t_1,\dots,t_{m-1}}(v_x)$, $\theta_{n,m,t_0,t_1,\dots,t_{m-1}}(v_{x+s}))$ under  arithmetic modulo $n$ where $x\in \mathbb{Z}_n,$ $x$ = $qm+j$, $0 \leq j \leq m-1$, $s\in R$, $0 \leq q,t_j \leq \frac{n}{m}-1$, $(v_x , v_{x+s})\in E(C_n(R))$, $V(C_n(R)) = \{v_0,v_1,v_2,\dots,v_{n-1}\}$ and $V(K_n) = \{u_0,u_1,u_2,\dots,u_{n-1}\}$. Under this mapping each $\Gamma_j$ of $C_n(R)$ rotates $jt_jm$ positions w.r.t. the regular $n$-gon in the clockwise direction and the mapping preserves adjacency, $j$ = $0,1,2,\dots,\frac{n}{m}-1$. By considering $V(C_n(R)) = A$ and $\Gamma_j$ = $A_j$ in Lemma \ref{l1.6}, we get $C_n(R)$ $\cong$ $\theta_{n,m,t_0,t_1,\dots,t_{m-1}}(C_n(R))$, $j = 0,1,\dots,m-1$.

\item Let $r \in \mathbb{Z}_n$, $m > 1$ divide $\gcd(n,r)$, $m^3$ divide $n$ and $\theta_{n,m,(x_0,t_0),(x_1,t_1),\dots,(x_{m-1},t_{m-1})}: \mathbb{Z}_n \rightarrow \mathbb{Z}_n$ and $\theta_{n,m,(x_j,t_j)}: \mathbb{Z}_n$ $\rightarrow \mathbb{Z}_n$  $\ni$ $\theta_{n,m,(x_0,t_0),(x_1,t_1),\dots,(x_{m-1},t_{m-1})}(x)$ = $\theta_{n,m,(x_j,t_j)}(x)$ = $j+(x_jq+t_j)m$ be bijective mappings under arithmetic modulo $n$ where $x$ = $qm+j$, $x \in \mathbb{Z}_n$, $0 \leq j \leq m-1$, $\gcd(\frac{n}{m}, x_j)$ = 1 and $0 \leq q,t_j \leq \frac{n}{m}-1$. Similarly, define 1-1 mappings, $\theta_{n,m,(x_0,t_0),(x_1,t_1),\dots,(x_{m-1},t_{m-1})}:$ $V(C_n(R)) \rightarrow$ $V(K_n))$ and $\theta_{n,m,(x_j,t_j)}: V(C_n(R)) \rightarrow V(K_n)$ for a set $R$ = $\{ r_1,r_2,\dots$, $r_k,n-r_k, \dots,n-r_1 \}$ $\ni$ $\theta_{n,m,(x_0,t_0),(x_1,t_1),\dots,(x_{m-1},t_{m-1})}(v_x) =$ $u_{j+(x_jq+t_j)m}$ = $\theta_{n,m,(x_j,t_j)}(v_x)$ and  $\theta_{n,m,(x_0,t_0),(x_1,t_1),\dots,(x_{m-1},t_{m-1})}((v_x, v_{x+s}))$ = $(\theta_{n,m,(x_j,t_j)}$ $(v_x)$, $\theta_{n,m,(x_i,t_i)}( v_{x+s}))$, under subscript arithmetic modulo $n$ where $m > 1$ divides $\gcd(n,r)$, $m^3$ divides $n$, $x$ = $qm+j$, $x+s$ = $q'm+i$, $\gcd(\frac{n}{m}, x_j)$ = 1 = $\gcd(\frac{n}{m}, x_i)$, $0 \leq i,j \leq m-1$, $0 \leq q,q',t_i,t_j \leq$ $\frac{n}{m}-1$, $x,x+s \in \mathbb{Z}_n$ and $r,s \in R$. Here, the mapping is 1-1 since under the above transformation, the set of vertices of each $\Gamma_j$ of $C_n(R)$ is mapped on to itself follows from $\gcd(\frac{n}{m}, x_j)$ = 1, $0 \leq j \leq m-1$ and the mapping preserves adjacency. And $\theta_{n,m,(x_0,t_0), (x_1,t_1),\dots, (x_{m-1},t_{m-1})}(C_n(R))$ $\cong$ $C_n(R)$, using Lemma \ref{l1.6}. Under the above transformation, for $j$ = $0,1,\dots,m-1$, the movement of each $\Gamma_{j+1}$ need not be uniform w.r.t. $\Gamma_j$ in the regular $n$-gon. 

\item Let $r \in \mathbb{Z}_n$, $m > 1$ divide $\gcd(n,r)$ and $m^3$ divide $n$. Define one-to-one onto mappings, $\mu_{n,m,(x_0,t_0),(x_1,t_1), \dots,(x_{m-1},t_{m-1})}:$ $\mathbb{Z}_n$ $\rightarrow$ $\mathbb{Z}_n$ and $\mu_{n,m,(x_j,t_j)}:$ $\mathbb{Z}_n$ $\rightarrow$ $\mathbb{Z}_n$ $\ni$ $\mu_{n,m,(x_j,t_j)}(x)$ = $\mu_{n,m,(x_0,t_0),(x_1,t_1),\dots,(x_{m-1},t_{m-1})}(x)$ = $j+(x_jq+jt_j)m$ under arithmetic modulo $n$ where $x = qm+j$, $x \in \mathbb{Z}_n$, $\gcd(\frac{n}{m}, x_j)$ = 1, $0 \leq j \leq m-1$ and $0 \leq q,t_j \leq \frac{n}{m}-1$. Similarly, define mappings, $\mu_{n,m,(x_0,t_0),(x_1,t_1),\dots,(x_{m-1},t_{m-1})}:$ $V(C_n(R))$ $\rightarrow$ $V(K_n)$ and $\mu_{n,m,(x_j,t_j)}:$ $V(C_n(R))$ $\rightarrow$ $V(C_n(1,2,\dots,n-1))$ which are 1-1 for a set $R$ = $\{r_1,r_2,\dots,r_k,n-r_k$, $n-r_{k-1},\dots,n-r_1\}$ $\ni$ $\mu_{n,m,(x_0,t_0),(x_1,t_1), \dots,(x_{m-1},t_{m-1})}(v_x)$ = $u_{j+(x_jq+jt_j)m}$ = $\mu_{n,m,(x_j,t_j)}(v_x)$ and $\mu_{n,m,(x_0,t_0),(x_1,t_1),\dots, (x_{m-1},t_{m-1})}((v_x, v_{x+s}))$ = $(\mu_{n,m, (x_j,t_j)}(v_x)$, $\mu_{n,m,(x_i,t_i)}(v_{x+s}))$ under arithmetic modulo $n$ where $m > 1$ divides $\gcd(n,r)$, $m^3$ divides $n$, $x = qm+j$, $x+s$ = $q'm+i$, $\gcd(\frac{n}{m}, x_j)$ = 1 = $\gcd(\frac{n}{m}, x_i)$, $0 \leq q,q',t_i,t_j \leq \frac{n}{m}-1$, $0 \leq i,j \leq m-1$, $x,x+s\in \mathbb{Z}_n$ and $r,s\in R$. Here the mapping is one-to-one since under the above transformation, the set of vertices of each $\Gamma_j$ of $C_n(R)$ is mapped onto itself follows from $\gcd(\frac{n}{m}, x_j)$ = 1, $0 \leq j \leq m-1$ and the mapping preserves adjacency. And $C_n(R)$ $\cong$ $\mu_{n,m,(x_0,t_0),(x_1,t_1),\dots,(x_{m-1},t_{m-1})}(C_n(R))$, using Lemma \ref{l1.6}. For $j$ = $0,1,\dots,m-1$, under the above transformation, the movement of each $\Gamma_{j+1}$ need not be uniform w.r.t. $\Gamma_j$ in the regular $n-gon$.  
\end{enumerate}

\item Transformations $\varphi_{n,x}$ defined on $\mathbb{Z}_n$ and $\theta_{n,m,t}$  defined on $C_n(R)$ act on their respective elements, elements of $\mathbb{Z}_n$ and $V(C_n(R))$, uniformly whereas transformations $\theta_{n,m,t_0,t_1,\dots,t_{m-1}}$, $\theta_{n,m,(x_0,t_0),(x_1,t_1),\dots,(x_{m-1},t_{m-1})}$  and  $\mu_{n,m,(x_0,t_0),(x_1,t_1),\dots,(x_{m-1},t_{m-1})}$ act uniformly when $t_0$ = $t_1$ = $\dots$ = $t_{m-1}$ = $t$, say, and $x_0$ = $x_1$ = $\dots$ = $x_{m-1}$ = 1. And in this case all these transformations become $\theta_{n,m,t}$. One can do extensive research work on the above transformations and try to find out more representations of circulant graphs and their algebraic properties. 
\end{enumerate}

\vspace{.1cm}
\noindent
{\bf Visval Basic program POLY215.EXE:}
 
Here, we present Visval Basic program POLY215.exe that we developed to show visually how the transformation $\theta_{8n,2,t}$ acts on $C_{8n}(R)$ for various values of $t$ and also to obtain Type-1 and Type-2 isomorphic circulant graphs, if exist, of $C_{8n}(R)$ where $R$ = $\{2,2r-1,4n-(2r-1)\}$, $1 \leq 2r-1 \leq 2n-1$, $n \geq 2$, $0 \leq t \leq 4n-1$ and $n,r\in \mathbb{N}$.  

\vspace{.2cm}
\noindent
{\bf POLY215.EXE VB Program: $\theta_{8n,2,n}(C_{8n}(\{2,2r-1,4n-(2r-1)\}))$}\\\\
Dim tempx() As Integer\\
Dim tempy() As Integer\\
Dim mov(10) As Integer\\
Dim n\%,r\%,r1\%\\
Dim dis as Double

\vspace{.1cm}
\noindent
Private sub rotate ($n$ As Integer, $r_1$ As Integer)\\
Me.cls\\
$dx_1$ = $line1.x_2 - line1.x_1$\\
$dy_1$ = $line1.y_2 - line1.y_1$\\
For i = 0 To (16*n) - 1\\
If $i$ $\mod$ 2 = 0 Then 

   Current $x$ = $line1.x_1+dx_1*cos((360/(16*n))*i*3.1428/180)$-
         
	\hspace{2cm}	$dy_1*sin((360/(16*n))*i* 3.1428/180)$
   
	Current $y$ = $line1.y_1+dx_1*sin ((360/(16*n))*i*3.1428/180)$+
      
       \hspace{2cm}     $dy_1*cos ((360/(16*n))*i*3.1428/180)$\\
Else

   Current $x$ = $line1.x_1+dx_1*cos((360/(16*n))*(i+dis)*3.1428/180)$-

\hspace{2cm}        $dy_1*sin((360/(16*n))*(i+dis)*3.1428/180)$
   
	Current $y$ = $line1.y_1+dx_1*sin ((360/(16*n))*(i+dis)*3.1428/180)$+
  
	\hspace{2cm}    $dy_1*cos ((360/(16*n))*(i+dis)*3.1428/180)$\\
End if\\
   Current $x$ = current $x$ - 200

   Current $y$ = current $y$ + 100\\
If $i$ $\mod$ 2 = 0 Then

   Print $``v"$ + str\$(i)\\
Else
   
	$Pr$ = $i$\\
If $Pr$ $<$ 0 Then
  
	$Pr$ = $((16*n) + Pr) \mod (16*n)$\\
End If 

   Print $``v"$ + str\$(Pr)\\
End If\\
Next

  Call redraw($n,r_1$)

   dis = dis + 0.2\\
End sub

\vspace{.2cm}
\noindent
Private Sub first ($n$ As Integer, $r$ As Integer)\\
$r_1$ = $(2*r - 1)$\\
Text1 = $``c"+str\$(16*n)+``("2, +Str\$(r_1)+``)", +Str\$(8*n-r_1)+``)"$\\
$dx_1$ = $line1.x_2 - line1.x_1$\\
$dy_1$ = $line1.y_2 - line1.y_1$\\
$dx_2$ = $line2.x_2 - line2.x_1$\\
$dy_2$ = $line2.y_2 - line2.y_1$\\
For $i$ = 0 To $(16*n) - 1$   \hspace{.5cm}      `For plot points $\&$ labels\\
Current $x$ = $line1.x_1 + dx_1*cos((360/(16*n))*i*3.1428/180)$ -

\hspace{2cm}                $dy_1*sin((360/(16*n))*i*3.1428/180)$\\
Current $y$ = $line1.y_1 + dx_1*sin ((360/(16*n))*i*3.1428/180)$ +

\hspace{2cm}               $dy_1*cos ((360/(16*n))*i*3.1428/180)$
\\
Current $x$ = current $x$ - 200\\
Current $y$ = current $y$ + 100\\
Print $"v"$ + str\$(i)\\
Current $x$ = $line2.x_1+dx_2*cos((360/(16*n))*i*3.1428/180)$ - 
 
\hspace{2cm}       $dy_2*sin((360/(16*n))*i*3.1428/180)$\\
Current $y$ = $line2.y_1 + dx_2*sin ((360/(16*n))*i*3.1428/180)$ +

\hspace{2cm}             $dy_2*cos ((360/(16*n))*i*3.1428/180)$\\
Temp $x(i)$ = current $x$\\
Temp $y(i)$ = current $y$\\
Print ”.”

    `MsgBox str\$(current $x$) + ``~" + str\$(current y)\\
Next

    For $j$ = 0 To $(16*n) - 1$ \hspace{.5cm}      `For first line

    Me.line (temp $x(j)$ - 10, temp $y(j)$ + 200) - (temp $x(j + r_1)$ 
    
\hspace{.5cm} $\mod$ $(16*n))$ - 10, temp $y((j + r_1) \mod$ $(16*n))$ + 200, vb Red\\
Next\\
For $k$ = 0 To $(16*n)$ - 1  step 2 \hspace{.2cm}  `For second line
   
	Me.line (temp $x(k)$ - 10, temp $y(k)$ + 200) - (temp $x((k + 2)$ 
     
\hspace{.5cm} $\mod$ $(16*n))$ - 10, temp $y((k + 2)$ $\mod$ $(16*n))$ + 200, vb Blue\\
Next\\
For $i$ = 0 To $(16*n)$ - 1  \hspace{.2cm}     `For Third line

   Me.line (temp $x(i)$-10, temp $y(i)$+200)-(temp $x((i+(8*n-r_1))$  
	
\hspace{.2cm} $\mod$ $(16*n))$-10,	temp $y((i+(8*n-r_1))$ $\mod$ $(16*n))$+200, vb Green\\
Next\\
       Line 1.visible = False

       Line 2.visible = False\\
End Sub

\vspace{.2cm}
\noindent
Private Sub redraw ($n$ As Integer, $r_1$ As Integer)\\
$dx_2$ = $line2.x_2 - line2.x_1$\\
$dy_2$ = $line2.y_2 - line2.y_1$\\
For $i$ = 0 To $(16*n)$ - 1\\
If $i$ $\mod$ 2 = 0 Then

Current $x$ = $line2.x_1 + dx_2*cos((360/(16*n))*i*3.1428/180)$ - 
             
		\hspace{2cm}	$dy_2*sin((360/(16*n))*i*3.1428/180)$

Current $y$ = $line2.y_1 + dx_2*sin ((360/(16*n))*i*3.1428/180)$ +
        
		\hspace{2cm}	$dy_2*cos ((360/(16*n))*i*3.1428/180)$\\
Else

Current $x$ = $line2.x_1 + dx_2*cos((360/(16*n))*(i+dis)*3.1428/180)$  
        
	\hspace{2cm}	-$dy_2*sin((360/(16*n))*(i+dis)*3.1428/180)$

Current $y$ = $line2.y_1 + dx_2*sin ((360/(16*n))*(i+dis)*3.1428/180)$
      
	\hspace{2cm}	+ $dy_2*cos ((360/(16*n))*(i+dis)*3.1428/180)$\\
End if

Temp $x(i)$ = current $x$

Temp $y(i)$ = current $y$

Print ``.”\\
Next

    If $r_1$ = 1 Then

    Text1 = $``c" + str\$(16*n) + ``(" + Str\$(r_1) +``, 2$, 
		
	\hspace{4cm} $" + Str\$(8*n-r_1) + ``)"$\\
Else

    Text1 = $``c" + str\$(16*n) + ``(2," + Str\$(r_1) + ``$, 
		
	\hspace{4cm} $" + Str\$(8*n -r_1) + ``)"$\\
End If \\
For $j$ = 0 To $(16*n)$ - 1       
  
	Me.line (temp $x(j + r_1)$ $\mod$ $(16*n))$ - 10,  temp $y(j + r_1)$ $\mod$ $(16*n))$
       
	\hspace{2cm}	+ 200) - (temp $x(j)$ - 10, temp $y(j)$ + 200), vb Red\\
Next\\
For $k$ = 0 To $(16*n)$ - 1 step 2     

   Me.line (temp $x(k)$ - 10, temp $y(k)$ + 200) - (temp $x((k + 2)$ 
        
	\hspace{2cm}  $\mod$ $(16*n))$ - 10, temp $y((k + 2)$ $\mod$ $(16*n))$ + 200), vb Blue

   Me.line (temp $x(k+1)$-10, temp $y(k+1)$+200)-(temp $x((k+3)$  
     
	\hspace{2cm} $\mod$ $(16*n))$ - 10, temp $y((k + 3)$ $\mod$ $(16*n))$ + 200), vb Blue\\
Next\\
For $i$ = 0 To $(16*n)-1$       

   Me.line (temp $x((i+(8*n-r_1))$ $\mod$ $(16*n))$-10, temp $y(i+(8*n-r_1))$
      
\hspace{2cm}	$\mod$ $(16*n))$+200) - (temp $x((i)$-10, temp $y((i)$+200), vb Green\\
Next

       Line 1.visible = False

       Line 2.visible = False\\
End Sub

\vspace{.2cm}
\noindent      
Private Sub command 1\_click()\\
End\\
End sub

\vspace{.2cm}	
\noindent
Private Sub command 2\_click()\\
If Command 2.caption = ``stop" Then

   Command 2.caption = ``continue"

   Timer1.Enabled = False\\
Else

   Command 2.caption = ``stop" 

   Timer1.Enabled = True\\
End If\\
End sub

\vspace{.2cm}
\noindent
Private Sub command 3\_click()\\
On Error Resume Next

   $n$ = Input Box (``Enter $n$", ``New Value", 1)

   ReDim temp $x(n*16)$

   ReDim temp $y(n*16)$\\
`MsgBox $r$

      `call first $(n,r)$\\
End sub

\vspace{.2cm}
\noindent
Private Sub Form\_Activate ()\\ 
	`$n$ = Input Box (``Enter $n$")\\  
	ReDim temp $x(n*16)$ \\ 
	ReDim temp $y(n*16)$  

	$r$ = $2*n$
  
	$r_1$ = $(2*r)-1$
  
	HScroll1.Max = $r_1$
  
	call first $(n,1)$\\
End sub

\vspace{.2cm}
\noindent
Private Sub Form\_Load()\\
$n$ = 1\\
dis = 0.2\\
End sub

\vspace{.2cm}
\noindent
Private Sub HScroll1\_Change ()\\
Call rotate ($n$, HScroll1.Value)\\
End Sub

\vspace{.2cm}
\noindent
Private Sub Timer1\_Timer ()\\
Call rotate ($n$, HScroll1.Value)\\
End Sub

\vspace{.1cm}
OUTPUT\\
----------------------------------------------------------------------------------------

\vspace{.1cm}
\noindent
\textbf{Declaration of competing interest}\quad 
The author declares that he has no conflict of interest.

\vspace{.2cm}
\noindent
\textbf{Acknowledgements}\quad I express my sincere thanks to Prof. S. Krishnan (late), Prof. V. Mohan and Prof. R. Aravamuthan (late), Thiyagarayar College of Engineering, Madurai, Tamil Nadu, India; Prof. M. I. Jinnah (late), Mr. Albert Joseph and Prof. L. John, University of Kerala, Trivandrum, Kerala, India; Prof. K. Vareethiah Konstantine and Prof. S. Amirthaiyan, St. Jude's College, Thoothoor, K. K. District, Tamil Nadu; Mr. R. Benadict, Headmaster (Rtd), Pius XI Higher Secondary School, Thoothoor; Dr. Oscar Fredy, Royal Liverpool University Hospital, Liverpool, U.K. and Prof. Lowell W Beineke, Purdue University, USA for their help and guidance and our sincere thanks to Dr. P. Wilson, Dr. A. Christopher and Mr. R. Satheesh of S.T. Hindu College, Nagercoil, India for their assistance to develop the VB programs. I also express my gratitude to the Central University of Kerala, Kasaragod, Kerala; St. Jude's College, Thoothoor and S. T. Hindu College, Nagercoil; and Lerroy Wilson Foundation, India (www.WillFoundation.co.in) for providing facilities to do this research work.

\begin {thebibliography}{10}

\bibitem {ad67}  
A. Adam, 
{\it Research problem 2-10},  
J. Combinatorial Theory, {\bf 3} (1967), 393.

\bibitem {amv} 
B. Alspach, J. Morris and V. Vilfred, 
{\it Self-complementary circulant graphs}, 
Ars Com., {\bf 53} (1999), 187-191.

\bibitem {bt} 
F. T. Boesch and R. Tindell, 
{\it Circulant and their connectivities},
J. Graph Theory, {\bf 8} (1984), 487-499.

\bibitem {cae} 
Chris Coyle, Austin Wyer and Elvis Offor, 
{\it Graph isomorphism},
The University of Tennessee, Knoxville, USA (2017).

\bibitem {da79}	
P. J. Davis, 
{\it Circulant Matrices,} 
Wiley, New York, 1979.

\bibitem {eltu} 
B. Elspas and J. Turner, 
{\it Graphs with circulant adjacency matrices}, 
J. Combinatorial Theory, {\bf 9} (1970), 297-307.

\bibitem {frs} 
D. Fronček, A. Rosa, and J. Širáň, 
{\it The existence of selfcomplementary circulant graphs}, European J. Combin., {\bf 17} (1996), 625--628.

\bibitem {ha69} 
F. Harary, 
{\it Graph Theory}, 
Addison-Wesley, 1969.

\bibitem {hv} Hong Victoria and G. Fisher Larence,
{\it Visual Basic.NET:  An Introduction to Computer Programming},
 Kendall Hunt Publishing, USA, 2015.

\bibitem {hz14}
Houqing Zhou,
{\it The Wiener Index of Circulant Graphs},
J. of Chemistry, Hindawi Pub. Cor. (2014), Article ID 742121. doi.org/10.1155/2014/742121.

\bibitem {h03}
F. K. Hwang,
{\it A survey on multi-loop networks},
Theoret. Comput. Sci., {\bf 299} (2003), 107-121.

\bibitem {krsi} 
I. Kra and S. R. Simanca, 
{\it On Circulant Matrices},  
AMS Notices, {\bf 59} (2012), 368--377.

\bibitem {li02} 
C. H. Li, 
{\it On isomorphisms of finite Cayley graphs - a survey}, 
Discrete Math., {\bf 256} (2002), 301--334.

\bibitem {mu04} 
M. Muzychuk, 
{\it A solution of the isomorphism problem for circulant graphs}, Proc. London Math. Soc., \textbf{88} (2004), 1--41.

\bibitem {mu97} 
M. Muzychuk, 
{\it On Adam's Conjecture for circulant graphs}, 
Discrete Math., \textbf{176} (1997), 285--298.

\bibitem {st02}
V. N. Sachkov and V. E. Tarakanov,
{\it Combinatorics of Nonnegative Matrices},
Translations of Math. Monographs, {\bf 213}, AM Society, Providence (2002).

\bibitem {sa62} H. Sachs, \emph{Uber selbstkomplementare Graphen}, Publ. Math. Debrecen, {\bf 9} (1962), 270--288.

\bibitem {to77} 
S. Toida, 
{\it A note on Adam's conjecture}, 
J. Combin. Theory Ser. B, \textbf{23} (1977), 239--246.

\bibitem {v96} 
V. Vilfred, 
{\it $\sum$-labelled Graphs and Circulant Graphs}, 
Ph.D. Thesis, University of Kerala, Thiruvananthapuram, Kerala, India (1996). 

\bibitem {v17} 
V. Vilfred, 
{\it A Study on Isomorphic Properties of Circulant Graphs:~ Self-complimentary, isomorphism, Cartesian product and factorization},  
Advances in Science, Technology and Engineering Systems (ASTES) Journal, \textbf{2 (6)} (2017), 236--241. DOI: 10.25046/ aj020628. ISSN: 2415-6698.

\bibitem {v13} 
V. Vilfred, 
{\it A Theory of Cartesian Product and Factorization of Circulant Graphs},  
Hindawi Pub. Corp. - J. Discrete Math.,  \textbf{Vol. 2013}, Article~ ID~ 163740, 10 pages.

\bibitem {vc13} 
V. Vilfred, 
{\it New Abelian Groups from Isomorphism of Circulant Graphs}, 
Proce. of Inter. Conf. on Applied Math. and Theoretical Computer Science, St. Xavier's Catholic Engineering College, Nagercoil, Tamil Nadu, India (2013), xiii--xvi.~ISBN~ 978 -93-82338 -30-7. 
 
\bibitem {v03} 
V. Vilfred, 
\emph{On Circulant Graphs, in Graph Theory and its Applications}, 
Narosa Publ., New Delhi, India, 2003, 34--36. ISBN 81-7319-569-2.

\bibitem {v25} 
V. Vilfred Kamalappan, 
\emph{All Type-2 Isomorphic  Circulant Graphs of $C_{16}(R)$ and $C_{24}(S)$}, 
arXiv: 2508.09384v1  [math.CO]  (12 Aug 2025), 28 pages.

\bibitem {v24} 
V. Vilfred Kamalappan, 
\emph{A study on Type-2 Isomorphic Circulant Graphs and related Abelian Groups}, 
arXiv: 2012.11372v11 [math.CO] (26 Nov. 2024), 183 pages.

\bibitem {v20} 
V. Vilfred Kamalappan, 
\emph{ New Families of Circulant Graphs Without Cayley Isomorphism Property with $r_i = 2$},
Int. Journal of Applied and Computational Mathematics, (2020) 6:90, 34 pages. https://doi.org/10.1007/s40819-020-00835-0. Published online: 28.07.2020 by Springer.

\bibitem {v2-1} 
V. Vilfred Kamalappan, 
\emph{A study on Type-2 Isomorphic Circulant Graphs. \\ Part 1: Type-2 isomorphic circulant graphs $C_n(R)$ w.r.t. $m$ = 2}. 
Preprint. 31 pages

\bibitem {v2-2} 
V. Vilfred Kamalappan, 
\emph{A study on Type-2 isomorphic circulant graphs. \\ Part 2: Type-2 isomorphic circulant graphs of orders 16, 24, 27}. 
Preprint. 32 pages

\bibitem {v2-3} 
V. Vilfred Kamalappan, 
\emph{A study on Type-2 isomorphic circulant graphs. \\ Part 3: 384 pairs of Type-2 isomorphic circulant graphs $C_{32}(R)$}. 
Preprint. 42 pages

\bibitem {v2-4} 
V. Vilfred Kamalappan, 
\emph{A study on Type-2 isomorphic circulant graphs. \\ Part 4: 960 triples of Type-2 isomorphic circulant graphs $C_{54}(R)$}. 
Preprint. 76 pages

\bibitem {v2-5} 
V. Vilfred Kamalappan, 
\emph{A study on Type-2 isomorphic circulant graphs. \\ Part 5: Type-2 isomorphic circulant graphs of orders 48, 81, 96}. 
Preprint. 33 pages

\bibitem {v2-6} 
V. Vilfred Kamalappan, 
\emph{A study on Type-2 Isomorphic Circulant Graphs. \\ Part 6: Abelian groups $(T2_{n, m}(C_n(R)), \circ)$ and $(V_{n, m}(C_n(R)), \circ)$}. 
Preprint. 19 pages

\bibitem {v2-7} 
V. Vilfred Kamalappan, 
\emph{A study on Type-2 Isomorphic Circulant Graphs. \\ Part 7: Isomorphism series, digraph and graph of $C_n(R)$}. 
Preprint. 54 pages

\bibitem {v2-8} 
V. Vilfred Kamalappan, 
\emph{A Study on Type-2 Isomorphic Circulant Graphs: Part 8: $C_{432}(R)$, $C_{6750}(S)$ - each has 2 types of Type-2 isomorphic circulant graphs}. 
Preprint. 99 pages

\bibitem {v2-9} 
V. Vilfred Kamalappan and P. Wilson, 
\emph{A study on Type-2 Isomorphic Circulant Graphs. \\ Part 9: Computer program to show Type-1 and -2 isomorphic circulant graphs}. 
Preprint. 21 pages

\bibitem {v2-10} 
V. Vilfred Kamalappan and P. Wilson, 
\emph{A study on Type-2 Isomorphic Circulant Graphs. \\ Part 10: Type-2 isomorphic  $C_{np^3}(R)$ w.r.t. $m$ = $p$ and related groups}. 
Preprint. 20 pages

\bibitem {vw1} 
V. Vilfred and P. Wilson, 
\emph{Families of Circulant Graphs without Cayley Isomorphism Property with $m_i = 3$}, 
IOSR Journal of Mathematics, \textbf{15 (2)} (2019), 24--31. DOI: 10.9790/5728-1502022431. ISSN: 2278-5728, 2319-765X. 
 
\bibitem {vw2} 
V. Vilfred and P. Wilson, 
\emph{New Family of Circulant Graphs without Cayley Isomorphism Property with $m_i = 5,$} 
Int. Journal of Scientific and Innovative Mathematical Research, \textbf{3 (6)} (2015), 39--47.

\bibitem {vw3} 
V. Vilfred and P. Wilson, 
\emph{New Family of Circulant Graphs without Cayley Isomorphism Property with $m_i = 7,$} 
IOSR Journal of Mathematics, \textbf{12} (2016), 32--37.
  
\end{thebibliography}


\end{document}